\documentclass[a4paper, 11pt, reqno]{amsart}
\usepackage[initials]{amsrefs}
\usepackage{amsmath,amscd,amssymb,latexsym, mathrsfs, amsthm}
\usepackage{hyperref}

\textwidth=152mm \oddsidemargin=4mm
\evensidemargin=\oddsidemargin
\textheight=238mm \advance\voffset-20mm \headheight=10mm \headsep=8mm
\DeclareMathVersion{can}
\DeclareMathAlphabet{\can}{OT1}{cmss}{m}{n}
\vfuzz2pt 
\hfuzz2pt 
\newtheorem{thm}{ Theorem}[section]
\newtheorem{cor}[thm]{Corollary}
\newtheorem{lem}[thm]{Lemma}
\newtheorem{prop}[thm]{Proposition}

\newtheorem{rem}[thm]{Remark}
\newtheorem{prob}[thm]{Problem}

\numberwithin{equation}{section}
\newcommand{\mc}{\mathcal}

\newcommand{\dis}{\displaystyle}

\newcommand{\A}{\mathbb{A}}

\newcommand{\OO}{\mathcal{O}}

\newcommand{\bF}{\mathbb{F}}
\newcommand{\CC}{\mathbb{C}}

\newcommand{\mb}{\mathbb}


\begin{document}
\title[Average Values of $L$--series for Real Characters in Function Fields]
{Average Values of $L$--series for Real Characters in Function Fields}%

\author[J.C. Andrade]{Julio C. Andrade}
\address{\rm  Department of Mathematics, University of Exeter, Exeter, EX4 4QF, United Kingdom}
\email{j.c.andrade@exeter.ac.uk}
\author[S. Bae]{Sunghan Bae}
\address{\rm Department of Mathematics, KAIST, Daejon 305-701, Korea}
\email{shbae@kaist.ac.kr}

\author[H. Jung]{Hwanyup Jung}
\address{\rm Department of Mathematics Education, Chungbuk National University, Cheongju  361-763, Korea}
\email{hyjung@chungbuk.ac.kr}
\subjclass[2010]{11G20 (Primary), 11M38, 11M50, 11R58, 14G10 (Secondary)}
\keywords{finite fields, function fields, hyperelliptic curves, $K$--groups, moments of quadratic Dirichlet $L$--functions, class number}


\begin{abstract}
We establish asymptotic formulae for the first and second moments of quadratic Dirichlet $L$--functions,
at the centre of the critical strip, associated to the real quadratic function field $k(\sqrt{P})$
and inert imaginary quadratic function field $k(\sqrt{\gamma P})$
with $P$ being a monic irreducible polynomial over a fixed finite field $\mathbb{F}_{q}$ of odd cardinality $q$
and $\gamma$ a generator of $\mathbb{F}_{q}^{\times}$.
We also study mean values for the class number and for the cardinality of the second $K$-group of maximal order
of the associated fields for ramified imaginary, real, and inert imaginary quadratic function fields over $\mathbb{F}_{q}$.

One of the main novelties of this paper is that we compute the second moment of quadratic Dirichlet $L$-functions associated
to monic irreducible polynomials. It is worth noting that the similar second moment over number fields is unknown.

The second innovation of this paper comes from the fact that, if the cardinality of the ground field is even then the task of average $L$-functions in function fields is much harder and, in this paper, we are able to handle this strenuous case and establish several mean values results of $L$-functions over function fields.
\end{abstract}

\maketitle
\tableofcontents

\section{Introduction and some basic facts}
\subsection{Introduction}
It is a profound problem in analytic number theory to understand the distribution of values of $L(s,\chi_{p})$,
the Dirichlet $L$--functions  associated to the quadratic character $\chi_{p}$, for fixed $s$ and variable $p$,
where for a prime number $p\equiv v \pmod 4$ with $v=1$ or $3$, the quadratic character $\chi_{p}(n)$ is defined
by the Legendre symbol $\chi_{p}(n)=(\frac{n}{p})$.
The problem about the distribution of values of Dirichlet $L$--functions with real characters $\chi$ modulo a prime $p$
was first studied by Elliott in \cite{Ell} and later some of his results were generalized by Stankus \cite{Stan}.

For $\mathrm{Re}(s)>\frac{1}{2}$, Stankus proved that $L(s,\chi_{p})$ is in a given Borel set $B$
in the complex plane with a certain probability which depends on $s$ and $B$.
However the same question is non-trivial if we consider $s$ in the center of the critical strip,
i.e., $\mathrm{Re}(s)=\frac{1}{2}$. In particular, it is a challenging (and open) problem to decide
if $L(\tfrac{1}{2},\chi_{p}) \ne 0$ for all quadratic characters $\chi_{p}$.
Appears that this fact was first conjectured by Chowla \cite{Chow}.

It is also a difficult problem to determine whether or not $L(\tfrac{1}{2},\chi_{p})\neq0$ for infinitely many primes $p$.
A natural strategy to attack this problem is to prove that $L(\tfrac{1}{2},\chi_{p})$ has a positive average value when $0<p\leq X$ and $X$ is large.
That is,
\begin{equation}
\sum_{\substack{p\leq X \\ p\equiv v \hspace{-0.7em}\pmod 4}} L(\tfrac{1}{2},\chi_{p}) > 0,
\end{equation}
when $X$ is large.
In this context, Goldfeld and Viola \cite{GV} have conjectured an asymptotic formula for
\begin{equation}
\sum_{\substack{p\leq X \\ p\equiv v \hspace{-0.7em}\pmod 4}}L(\tfrac{1}{2},\chi_{p}),
\end{equation}
and Jutila \cite{Jut} was able to establish the following asymptotic formula:

\begin{thm}[Jutila]\label{jutilat}
For $v=1$ or $3$, we have
\begin{equation}
\sum_{\substack{p\leq X \\ p\equiv v \hspace{-0.7em}\pmod 4}}(\log p)L(\tfrac{1}{2},\chi_{p})
=\frac{1}{4}X \left\{\log(X/\pi) + \frac{\Gamma'}{\Gamma}(v/4)+4\gamma-1\right\} + O\left(X(\log X)^{-A}\right),
\end{equation}
where the implied constant is not effectively calculable. The following estimate is effective:
\begin{equation}
\sum_{\substack{p\leq X \\ p\equiv v \hspace{-0.7em}\pmod 4}}(\log p)L(\tfrac{1}{2},\chi_{p})= \frac{1}{4}X\log X
+O\left(X(\log X)^{\varepsilon}\right).
\end{equation}
\end{thm}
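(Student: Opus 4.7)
The overall strategy is to attack the sum via an approximate functional equation for $L(\tfrac{1}{2},\chi_p)$, expressing this central value as a smoothed sum of length roughly $\sqrt{p}$ in the character values $\chi_p(n)$. After substitution and interchanging the order of summation, the problem reduces to estimating, for each fixed $n$, a prime sum of the shape
\[
S(n;X) \;=\; \sum_{\substack{p\le X\\ p\equiv v\pmod 4}} (\log p)\,\chi_p(n)\, W\!\left(\tfrac{n}{\sqrt{p}}\right),
\]
where $W$ is a smooth weight coming from the $\Gamma$-factors in the completed $L$-function. Since the character $\chi_p=\bigl(\tfrac{\cdot}{p}\bigr)$ varies with $p$, the crucial move is quadratic reciprocity, which rewrites $\chi_p(n)$ (for odd $n$) as $\pm\bigl(\tfrac{p}{n}\bigr)$ with a sign depending only on $p,n \bmod 4$. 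This converts the inner sum into a sum of a \emph{fixed} Dirichlet character evaluated at primes in an arithmetic progression modulo $4n$, placing it within reach of the prime number theorem for arithmetic progressions.

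Next I would split the outer sum in $n$ according to whether $n$ is a perfect square. For the diagonal $n=m^2$ with $p\nmid m$ one has $\chi_p(m^2)=1$, so the prime number theorem in the form $\sum_{p\le X,\,p\equiv v\!\!\pmod 4}\log p=\tfrac12 X+O(X(\log X)^{-A})$ yields the diagonal contribution
\[
X\sum_{m\le Y} \tfrac{1}{m}\,W\!\left(\tfrac{m^2}{\sqrt{X}}\right) + (\text{lower order}),
\]
for an effective cutoff $Y\asymp X^{1/4}$. Unfolding the Mellin-transform definition of $W$ and shifting the contour past the pole at $s=0$ produces the leading $\tfrac14 X\log(X/\pi)$, while the residue of $\Gamma((s+\tfrac{1}{2})/2+\tfrac14(v-1))$ at $s=0$ contributes the digamma value $\Gamma'/\Gamma(v/4)$, and Euler's constant $\gamma$ enters through $\sum_{m\le Y}1/m=\log Y+\gamma+O(1/Y)$.

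For the off-diagonal range $n$ non-square, reciprocity turns $\bigl(\tfrac{\cdot}{n}\bigr)$ into a non-principal character modulo $4n$, so Siegel--Walfisz gives
\[
\sum_{\substack{p\le X\\ p\equiv v\pmod 4}} (\log p)\,\chi_p(n) \;\ll_A\; X(\log X)^{-A},
\]
uniformly for $n$ up to any fixed power of $\log X$, and large-sieve or Polya--Vinogradov estimates handle the remaining range $n$ up to $\sqrt{X}(\log X)^B$. Summing over $n$ produces the ineffective error $O(X(\log X)^{-A})$, with the ineffectivity entering solely through Siegel's theorem on exceptional zeros. The effective variant replaces Siegel--Walfisz by the classical zero-free region of $L(s,\chi)$, producing a weaker but effective saving; the resulting error $X(\log X)^{\varepsilon}$ absorbs the linear-in-$X$ terms in the main asymptotic, which is why only $\tfrac14 X\log X$ survives there.

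The main obstacle is precisely the off-diagonal range with $n$ of order $\sqrt{X}$: there the modulus $4n$ of the reciprocity character has size comparable to the prime-sum length, so Siegel--Walfisz barely applies and strong uniformity in the modulus is required. A secondary technical difficulty is tracking the root number of the functional equation, which depends on $p\bmod 4$ and is the mechanism by which the congruence class $v$ enters the archimedean $\Gamma'/\Gamma(v/4)$ term; this signed dependence must be carried consistently through the weight $W$ from start to finish.
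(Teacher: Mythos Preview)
The paper does not prove this statement at all: Theorem~1.1 is quoted from Jutila's 1981 paper \emph{On the Mean Value of $L(\tfrac{1}{2},\chi)$ for Real Characters} as background and motivation for the function field analogues that the paper actually establishes (Theorems~2.2, 2.4, etc.). There is therefore no proof in the paper to compare your proposal against.

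That said, your sketch is a faithful outline of Jutila's original argument: approximate functional equation, quadratic reciprocity to flip $\chi_p(n)$ into a fixed character modulo $4n$ evaluated at primes, diagonal $n=m^2$ giving the main term with the digamma and Euler constants, and Siegel--Walfisz handling the non-square $n$ at the cost of ineffectivity. The identification of the ineffective/effective dichotomy with the use or avoidance of Siegel's theorem is correct. If anything, your description of the large-$n$ off-diagonal range is slightly optimistic---Jutila does not push all the way to $n\asymp\sqrt{X}$ with Siegel--Walfisz alone but combines it with character-sum estimates and a truncation of the approximate functional equation---but the shape of the argument is right.
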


In a recent paper \cite{AK13}, the authors raise the question about higher moments for the family of
quadratic Dirichlet $L$--functions associated to $\chi_{p}$. In other words, the problem is

\begin{prob}\label{mainprob}
Establish asymptotic formulas for
\begin{equation}\label{AKprob}
\sum_{\substack{p\leq X \\ p\equiv v \hspace{-0.7em}\pmod 4}}L(\tfrac{1}{2},\chi_{p})^{k},
\end{equation}
when $X\rightarrow\infty$ and $k>1$.
\end{prob}
The only known asymptotic formulae for \eqref{AKprob} are those given in the Theorem \ref{jutilat},
i.e., we have asymptotic formulas merely when $k=1$ and it is an important open problem for $k>1$.

The first aim of this paper is to study the function field analogue of the problem above
in the same spirit as those recent results obtained by Andrade and Keating in \cite{AK1, AK2, AK3, AK13} and extend their results.
The second aim of the paper is to derive asymptotic formulas for the mean values of quadratic Dirichlet $L$--functions
over the rational function field at the special point $s=1$ and as an immediate corollary to obtain the mean values
of the associated class numbers over function fields.

One of the main novelties of this paper is that we compute the second moment of quadratic Dirichlet $L$-functions associated
to monic irreducible polynomials of \textit{even} degree (the odd degree case was computed by Andrade-Keating in \cite{AK13}), and in this way we are able to go beyond of what is known in the number field case. The second innovation of this paper comes from the fact that, if the cardinality of the ground field is even then the task of average $L$-functions in function fields is much harder and, in this paper, we are able to handle this strenuous case and establish several mean values results of $L$-functions over function fields. (See next section for more details.)

\subsection{Zeta function of curves}
Let $\mathbb{F}_{q}$ be a finite field of odd cardinality, $\A=\mathbb{F}_{q}[T]$ the polynomial ring over $\mathbb{F}_{q}$
and we denote by $k=\mathbb{F}_{q}(T)$ the rational function field over $\mathbb{F}_{q}$.
We consider $C$ to be any smooth, projective, geometrically connected curve of genus $g\geq1$ defined over the finite field $\mathbb{F}_{q}$.
In this setting Artin \cite{Art} has defined the zeta function of the curve $C$ as
\begin{equation}
Z_{C}(u):=\exp\left(\sum_{n=1}^{\infty}N_{n}(C)\frac{u^{n}}{n}\right), \quad |u|<\frac{1}{q}
\end{equation}
where $N_{n}(C):=\mathrm{Card}(C(\mathbb{F}_{q}))$ is the number of points on $C$ with coordinates
in a field extension $\mathbb{F}_{q^{n}}$ of $\mathbb{F}_{q}$ of degree $n\geq1$ and $u=q^{-s}$.
It is well known that the zeta function attached to $C$ is a rational function
as proved by Weil \cite{Weil} and in this case is presented in the following form
\begin{equation}
Z_{C}(u)=\frac{L_{C}(u)}{(1-u)(1-qu)},
\end{equation}
where $L_{C}(u)\in\mathbb{Z}[u]$ is a polynomial of degree $2g$, called the \textbf{$L$--polynomial} of the curve $C$.
The Riemann-Roch theorem for function fields (see \cite[Theorem 5.4 and Theorem 5.9]{Ro02}) show us
that $L_{C}(u)$ satisfies the following functional equation:
\begin{equation}
L_{C}(u)=(qu^{2})^{g}L_{C}\left(\frac{1}{qu}\right).
\end{equation}
And the Riemann Hypothesis for curves over finite fields is a theorem in this setting,
which was established by Weil \cite{Weil} in 1948,
and it says that the zeros of $L_{C}(u)$ all lie on the circle $|u|=q^{-\frac{1}2}$, i.e.,
\begin{equation}
L_{C}(u)=\prod_{j=1}^{2g}(1-\alpha_{j}u), \quad \text{ with $|\alpha_{j}|=\sqrt{q}$\, for all $j$.}
\end{equation}

\subsection{Some Background on $\A = \mathbb{F}_{q}[T]$}
Let $\A^+$ denote the set of monic polynomials in $\A$
and $\mb P$ denote the set of monic irreducible polynomials in $\A$.
For a positive integer $n$ we denote by $\A_{n}^{+}$ to be the set of monic polynomials in $\A$ of degree $n$
and by $\mb P_{n}$ to be the set of monic irreducible polynomials in $\A$ of degree $n$.
Throughout this paper, a monic irreducible polynomial $P\in\mb{P}$ will be also called a ``prime" polynomial.
The \textit{norm} of a polynomial $f\in\mathbb{A}$ is defined to be $|f|:=q^{\mathrm{deg}(f)}$ for $f\neq0$, and $|f|=0$ for $f=0$.
The sign $sgn(f)$ of $f$ is the leading coefficient of $f$.
The zeta function of $\A$ is defined for $\mathrm{Re}(s)>1$ to be the following infinite series
\begin{equation}
\zeta_{\mathbb{A}}(s):=\sum_{f\in \A^{+}} |f|^{-s}
= \prod_{P\in\mb P}\left(1-|P|^{-s}\right)^{-1}, \quad \mathrm{Re}(s)>1.
\end{equation}
It is easy to show (see \cite[Chapter 2]{Ro02}) that the zeta function $\zeta_{\A}(s)$ is a very simple function and can be rewritten as
\begin{equation}
\zeta_{\A}(s)=\frac{1}{1-q^{1-s}}.
\end{equation}
The monic irreducible polynomials in $\A=\mathbb{F}_{q}[T]$ also satisfies the analogue of the Prime Number Theorem.
In other words we have the following Theorem (\cite[Theorem 2.2]{Ro02}).

\begin{thm}[Prime Polynomial Theorem] \label{thm:pnt}
Let $\pi_{\A}(n)$ denote the number of monic irreducible polynomials in $\A$ of degree $n$.
Then, we have
\begin{equation}
\pi_{\A}(n)=\#\mb{P}_{n}=\frac{q^{n}}{n}+O\left(\frac{q^{\tfrac{n}{2}}}{n}\right).
\end{equation}
\end{thm}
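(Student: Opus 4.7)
The plan is to extract the count $\pi_{\A}(n)$ directly from the two expressions for $\zeta_{\A}(s)$ already recorded in the excerpt. The Euler product and the closed form $\zeta_{\A}(s)=1/(1-q^{1-s})$ must agree as formal series in $u=q^{-s}$, and matching coefficients will yield an exact identity for $\sum_{d\mid n} d\,\pi_{\A}(d)$ from which Möbius inversion recovers $\pi_{\A}(n)$.

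Concretely, I would first rewrite everything in the variable $u=q^{-s}$. Setting $Z_{\A}(u):=\zeta_{\A}(s)$, the Euler product becomes
\begin{equation*}
Z_{\A}(u)=\prod_{P\in\mb{P}}\bigl(1-u^{\deg P}\bigr)^{-1}=\frac{1}{1-qu},
\end{equation*}
where the last equality uses $\zeta_{\A}(s)=1/(1-q^{1-s})$ (equivalently, the fact that there are $q^{n}$ monic polynomials of degree $n$). Taking the logarithmic derivative with respect to $u$ and multiplying by $u$ gives
\begin{equation*}
\sum_{P\in\mb{P}}\frac{(\deg P)\,u^{\deg P}}{1-u^{\deg P}}
=\frac{qu}{1-qu}=\sum_{n=1}^{\infty}q^{n}u^{n}.
\end{equation*}
Expanding the geometric series on the left and grouping primes by degree, the coefficient of $u^{n}$ yields the clean identity
\begin{equation*}
\sum_{d\mid n} d\,\pi_{\A}(d)=q^{n}.
\end{equation*}

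Next I would apply Möbius inversion to obtain the explicit formula
\begin{equation*}
n\,\pi_{\A}(n)=\sum_{d\mid n}\mu(n/d)\,q^{d}.
\end{equation*}
The dominant term is $d=n$, contributing $q^{n}$. For the remaining divisors one has $d\leq n/2$, so the tail is bounded crudely by
\begin{equation*}
\Bigl|\sum_{\substack{d\mid n\\ d<n}}\mu(n/d)\,q^{d}\Bigr|
\leq \sum_{d=1}^{\lfloor n/2\rfloor} q^{d}\leq \frac{q}{q-1}\,q^{n/2}=O\bigl(q^{n/2}\bigr).
\end{equation*}
Dividing by $n$ gives $\pi_{\A}(n)=q^{n}/n+O(q^{n/2}/n)$, as claimed.

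There is no real obstacle: the argument is entirely formal once the closed form of $\zeta_{\A}$ is in hand, because the analytic subtleties that plague the classical prime number theorem (contour shifts, zero-free regions) are absent here. The only point worth double-checking is the legitimacy of manipulating the Euler product as a formal power series in $u$, which is immediate since each factor $(1-u^{\deg P})^{-1}$ has constant term $1$ and only finitely many primes have any given degree.
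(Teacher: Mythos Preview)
Your argument is correct and is the standard proof of the Prime Polynomial Theorem. Note, however, that the paper does not actually prove this statement: it is quoted with a reference to \cite[Theorem 2.2]{Ro02}, where precisely the argument you outline (the identity $\sum_{d\mid n} d\,\pi_{\A}(d)=q^{n}$ followed by M\"obius inversion and a crude tail bound) is given.
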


Hoffstein and Rosen \cite{HR} were one of the first to study mean values of $L$-functions over function fields.
In their beautiful paper, they established several mean values of $L$-series over the rational function field they considered averages over all monic polynomials, as well as the sum over square-free polynomials in $\mathbb{F}_{q}[T]$. But in their paper, they never consider mean values of $L$-functions associated to monic irreducible polynomials over $\mathbb{F}_{q}[T]$. In this paper, we investigate the problem of averaging $L$-functions over prime polynomials and we compute the first and the second moment of several families of $L$-functions, thus extending the pioneering work of Hoffstein and Rosen. It is also important to note that our methods are totally different from those used by Hoffstein and Rosen and are based on the use of the approximate functional
equation for function fields.

\section{Statement of Results}
\subsection{Odd characteristic case}
In this subsection we assume that $q$ is odd.
First we present some preliminary facts on quadratic Dirichlet $L$--functions
for the rational function field $k = \mathbb{F}_{q}(T)$
and for this we use Rosen's book \cite{Ro02} as a guide to the notations and definitions.
We also present the results of Andrade--Keating \cite{AK13}, which is the main inspiration for this article.

\subsubsection{\bf Quadratic Dirichlet $L$--function attached to $\chi_{D}$}
Fix a generator $\gamma$ of $\bF_{q}^{\times}$.
Let $\mb H$ be the set of non-constant square free polynomials $D$ in $\A$ with $sgn(D) \in \{1, \gamma\}$.
Then any quadratic extension $K$ of $k$ can be written uniquely as $K= K_{D} := k(\sqrt{D})$ for $D\in\mb H$.
The infinite prime $\infty = (1/T)$ of $k$ is ramified, splits, or is inert in $K_{D}$
according as $\deg(D)$ is odd, $\deg(D)$ is even and $sgn(D)=1$, or $\deg(D)$ is even and $sgn(D)=\gamma$.
Then $K_{D}$ is called {\it ramified imaginary}, {\it real}, or {\it inert imaginary}, respectively.
The genus $g_{D}$ of $K_{D}$ is given by
\begin{align}\label{genus-odd}
g_{D} = \left[\frac{\deg(D)+1}2\right] - 1 = \begin{cases}
\frac{1}{2}(\deg(D)-1) & \text{if $\deg(D)$ is odd,} \\ \frac{1}{2} \deg(D)-1 & \text{if $\deg(D)$ is even.}
\end{cases}
\end{align}

For $D\in\mb H$, let $\chi_{D}$ be the quadratic Dirichlet character modulo $D$
defined by the Kronecker symbol $\chi_{D}(f) = (\frac{D}{f})$ with $f\in\A$.
For more details about Dirichlet characters for polynomials over finite fields see \cite[Chapters 3, 4]{Ro02}.
The $L$--function associated to the character $\chi_{D}$ is defined by the following Dirichlet series
\begin{align}
L(s,\chi_{D}) := \sum_{f\in\A^{+}} \chi_{D}(f) |f|^{-s} = \prod_{P\in\mb P} \left(1-\chi_{D}(P) |P|^{-s}\right)^{-1},
\quad {\rm Re}(s) > 1.
\end{align}
From \cite[Propositions 4.3, 14.6 and 17.7]{Ro02},
we have that $L(s,\chi_{D})$ is a polynomial in $z=q^{-s}$ of degree $\deg(D)-1$.
Also we have that the quadratic Dirichlet $L$--function associated to $\chi_{D}$,
$\mathcal{L}(z,\chi_{D})=L(s,\chi_{D})$, has a ``trivial" zero at $z=1$ (resp. $z=-1$)
if and only if $\deg(D)$ is even and $sgn(D)=1$ (resp. $\deg(D)$ is even and $sgn(D)=\gamma$)
and so we can define the ``completed" $L$--function as
\begin{equation}\label{completed-L}
\mc{L}^{*}(z,\chi_{D}) = \begin{cases}
\mc{L}(z,\chi_{D}) & \text{ if $\deg(D)$ is odd,} \\
(1-z)^{-1} \mc{L}(z,\chi_{D}) & \text{ if $\deg(D)$ is even and $sgn(D) = 1$,} \\
(1+z)^{-1} \mc{L}(z,\chi_{D}) & \text{ if $\deg(D)$ is even and $sgn(D) = \gamma$,}
\end{cases}
\end{equation}
which is a polynomial of even degree $2 g_{D}$ and satisfies the functional equation
\begin{equation}\label{functional equation}
\mathcal{L}^{*}(z,\chi_{D})=(qz^{2})^{g_{D}}\mathcal{L}^{*}\left(\frac{1}{qz},\chi_{D}\right).
\end{equation}
In all cases, we have that
\begin{equation}
\mathcal{L}^{*}(z,\chi_{D})=L_{C_{D}}(z),
\end{equation}
where $L_{C_{D}}(z)$ is the numerator of the zeta function associated to the hyperelliptic curve given in the affine form by
\begin{equation}
C_{D} : y^{2}=D(T).
\end{equation}

The following proposition is quoted from Rudnick \cite{Ru}, and it is proved by using the explicit formula for $L(s,\chi_{P})$ and the Riemann Hypothesis for curves.

\begin{prop}\label{bound}
For any non-constant monic polynomial $f\in\A^{+}$, which is not a perfect square, we have
\begin{equation}
\Bigg|\sum_{P\in\mb P_{n}}\left(\frac{f}{P}\right)\Bigg|\ll\frac{\mathrm{deg}(f)}{n}q^{n/2}.
\end{equation}
\end{prop}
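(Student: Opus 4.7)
The plan is to derive the bound from the explicit formula for $L(u,\chi_{D})$ combined with the Riemann Hypothesis for curves over $\bF_{q}$ (Weil's theorem), which is the only deep input. First I would reduce to the case of a non-trivial quadratic character. Writing $f = c\, D\, h^{2}$ with $c\in\bF_{q}^{\times}$, $D\in\A^{+}$ square-free (necessarily non-constant since $f$ is not a perfect square) and $h\in\A^{+}$, one has for any prime $P\in\mb{P}_{n}$ with $P\nmid h$ the factorization
\begin{equation*}
\left(\tfrac{f}{P}\right) \;=\; \left(\tfrac{c}{P}\right)\!\left(\tfrac{D}{P}\right)\!\left(\tfrac{h}{P}\right)^{2} \;=\; \varepsilon_{n}\,\chi_{D}(P),
\end{equation*}
where $\varepsilon_{n}:=(c/P)\in\{\pm 1\}$ depends only on $c$ and on $n$, since $|P|=q^{n}$ for every $P\in\mb{P}_{n}$. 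The primes of degree $n$ dividing $h$ contribute $0$ and in any case number at most $\deg(f)/n$, so it suffices to bound $\sum_{P\in\mb{P}_{n}}\chi_{D}(P)$.

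Next, I would apply the explicit formula. By the discussion around \eqref{completed-L} and \eqref{functional equation}, the $L$--polynomial factors as $L(u,\chi_{D})=\prod_{j=1}^{N}(1-\alpha_{j}u)$ with $N=\deg(D)-1$, and the Riemann Hypothesis for $\mathcal{L}^{*}(z,\chi_{D})$ asserts that all but at most one $\alpha_{j}$ satisfy $|\alpha_{j}|=\sqrt{q}$, the possible exceptional (``trivial") zero contributing an $\alpha=\pm 1$. Taking the logarithmic derivative $-u\,L'(u,\chi_{D})/L(u,\chi_{D})$ and matching coefficients of $u^{n}$ on both sides yields
\begin{equation*}
\sum_{m\in\A_{n}^{+}}\chi_{D}(m)\,\Lambda(m) \;=\; -\sum_{j=1}^{N}\alpha_{j}^{\,n},
\end{equation*}
where $\Lambda$ denotes the von Mangoldt function on $\A^{+}$ (with $\Lambda(P^{k})=\deg(P)$ and zero otherwise).

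Finally, I would isolate the contribution of primes of degree $n$ from that of higher prime powers:
\begin{equation*}
n\sum_{P\in\mb{P}_{n}}\chi_{D}(P) \;=\; -\sum_{j=1}^{N}\alpha_{j}^{n} \;-\; \sum_{\substack{k\geq 2\\ k\mid n}}\frac{n}{k}\sum_{Q\in\mb{P}_{n/k}}\chi_{D}(Q)^{k}.
\end{equation*}
By Theorem~\ref{thm:pnt} the second sum on the right is $O\!\bigl(\sum_{k\geq 2}q^{n/k}\bigr)=O(q^{n/2})$, while the first is bounded by $N\,q^{n/2}+O(1)\leq\deg(D)\,q^{n/2}$ thanks to the Riemann Hypothesis. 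Dividing by $n$ and using $\deg(D)\leq\deg(f)$ gives $\bigl|\sum_{P\in\mb{P}_{n}}\chi_{D}(P)\bigr|\ll\deg(f)\,q^{n/2}/n$, and reinstating the sign $\varepsilon_{n}$ together with the negligible $O(\deg(f)/n)$ from primes dividing $h$ proves the claim. The only real obstacle is the Riemann Hypothesis itself, already at our disposal; the remainder is routine bookkeeping around the square-free reduction and the single possible trivial zero of $\mathcal{L}(z,\chi_{D})$.
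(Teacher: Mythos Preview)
Your argument is correct and follows exactly the approach the paper indicates (the paper does not actually prove the proposition but cites Rudnick, noting that the proof uses the explicit formula for the $L$--function together with the Riemann Hypothesis for curves). One minor redundancy: since $f$ is assumed monic and $D,h\in\A^{+}$, the constant $c$ in your factorization $f=c\,D\,h^{2}$ is automatically $1$, so $\varepsilon_{n}\equiv 1$; this does not affect the validity of the argument.
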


\subsubsection{\bf The Prime Hyperelliptic Ensemble}
We consider $\mb P_{n}$ as a probability space (ensemble) with the uniform probability measure attached to it.
So the expected value of any function $F$ on $\mb P_{n}$ is defined as
\begin{equation}
\left\langle F\right\rangle_{n}:=\frac{1}{\#\mb{P}_{n}}\sum_{P\in\mb{P}_{n}}F(P).
\end{equation}
Using Theorem \ref{thm:pnt} we have that
\begin{equation}
\frac{1}{\#\mb{P}_{n}}\sim\frac{\log_{q}|P|}{|P|}, \quad \text{as $n\rightarrow\infty$,}
\end{equation}
and thus we may write the expected value as
\begin{equation}\label{expected value}
\left\langle F(P)\right\rangle_{n}\sim\frac{\log_{q}|P|}{|P|}\sum_{P\in\mb{P}_{n}}F(P), \quad \text{as $n\rightarrow\infty$}.
\end{equation}

\subsubsection{\bf Main Results}
In this section we present the main theorems of this paper and we also state the previous results of Andrade--Keating \cite{AK13},
which is the main motivation for this paper.

From Andrade and Keating we have the following mean value theorem:

\begin{thm}[Andrade--Keating \cite{AK13}]
Let $\mathbb{F}_{q}$ be a fixed finite field of odd cardinality with $q\equiv 1 \pmod 4$.
Then for every $\varepsilon>0$ we have,
\begin{equation}
\sum_{P\in\mb{P}_{2g+1}}(\log_{q}|P|)L(\tfrac{1}{2},\chi_{P})
=\frac{|P|}{2}(\log_{q}|P|+1)+O\left(|P|^{\tfrac{3}{4}+\varepsilon}\right)
\end{equation}
and
\begin{equation}
\sum_{P\in\mb{P}_{2g+1}}L(\tfrac{1}{2},\chi_{P})^{2}=\frac{1}{24}\frac{1}{\zeta_{\A}(2)}|P|(\log_{q}|P|)^{2}+O\left(|P|(\log_{q}|P|)\right).
\end{equation}
\end{thm}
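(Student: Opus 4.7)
The natural tool is an approximate functional equation. Writing $\mathcal{L}(z,\chi_P) = \sum_{n=0}^{2g} c_n(P) z^n$ with $c_n(P) = \sum_{f \in \A_n^+} \chi_P(f)$, the functional equation \eqref{functional equation} forces $c_n(P) = q^{n-g} c_{2g-n}(P)$; substituting $z = q^{-1/2}$ and folding the high half onto the low half yields
\begin{equation*}
L(\tfrac12,\chi_P) = \sum_{n=0}^{g} q^{-n/2}\sum_{f\in\A_n^+}\chi_P(f) + \sum_{n=0}^{g-1} q^{-n/2}\sum_{f\in\A_n^+}\chi_P(f).
\end{equation*}
Interchanging orders reduces both moments to character sums $\sum_{P\in\mb P_{2g+1}}\chi_P(f)$ (first moment) and $\sum_{P\in\mb P_{2g+1}}\chi_P(f_1 f_2)$ (second moment). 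By quadratic reciprocity in $\A$ these equal $\sum_P \chi_{f}(P)$ and $\sum_P \chi_{f_1 f_2}(P)$ up to an explicit sign, so Proposition \ref{bound} controls the off-diagonal whenever the argument is not a perfect square, while the diagonal is just $\#\mb P_{2g+1}$ times a sum of inverse norms.

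\textbf{First moment.} If $f = h^2$, then $\chi_P(f) = 1$ for every $P \in \mb P_{2g+1}$ because $\deg h \le g/2 < 2g+1$. The diagonal therefore collapses to $(\lfloor g/2\rfloor+1)+(\lfloor(g-1)/2\rfloor+1) = g+1$, times $\#\mb P_{2g+1}$. After multiplication by $\log_q|P|=2g+1$ and using Theorem \ref{thm:pnt}, this produces the main term $\tfrac12|P|(\log_q|P|+1)$. The off-diagonal is bounded via Proposition \ref{bound} by
\begin{equation*}
\sum_{n=1}^{g} q^{-n/2}\cdot q^n\cdot\frac{n}{2g+1}\, q^{(2g+1)/2} \ll \frac{q^{(2g+1)/2}}{2g+1}\cdot g\,q^{g/2} = O\bigl(|P|^{3/4}\bigr),
\end{equation*}
which after the $\log_q|P|$ multiplier is $O(|P|^{3/4+\varepsilon})$.

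\textbf{Second moment and main obstacle.} Squaring and interchanging produces four double sums of the same shape; the diagonal consists of pairs $(f_1,f_2)$ with $f_1 f_2 = \square$. Parametrising $f_1 = ar^2$, $f_2 = as^2$ with $a$ squarefree and $r,s$ monic, each diagonal sum reads
\begin{equation*}
\sum_{a\text{ sq-free}} \frac{1}{|a|} \Bigl(\bigl\lfloor\tfrac{g_1-\deg a}{2}\bigr\rfloor+1\Bigr)\Bigl(\bigl\lfloor\tfrac{g_2-\deg a}{2}\bigr\rfloor+1\Bigr),
\end{equation*}
with $(g_1,g_2)$ running through $(g,g),(g,g-1),(g-1,g),(g-1,g-1)$. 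Using the density $\sum_{\deg a=k,\text{ sq-free}}|a|^{-1} = 1-1/q = 1/\zeta_{\A}(2)$ for $k\ge 2$, each of these sums is asymptotic to $g^3/(12\,\zeta_{\A}(2))$; combining the four and multiplying by $\#\mb P_{2g+1}\sim |P|/(2g+1)$ delivers the main term $|P|(2g+1)^2/(24\,\zeta_{\A}(2)) = |P|(\log_q|P|)^2/(24\,\zeta_{\A}(2))$. The off-diagonal is again handled by Proposition \ref{bound} and, after an elementary geometric-sum estimate, contributes only $O(|P|)$. The main obstacle is the simultaneous extraction of the precise constant $1/(24\,\zeta_{\A}(2))$ -- which rests on the squarefree density together with careful accounting of the four boundary cases of the approximate functional equation -- and the control of the error at $O(|P|\log_q|P|)$ despite the $\asymp q^{2g}$ pairs $(f_1,f_2)$ involved; a Dirichlet-series/Perron viewpoint, recognising the diagonal as the partial sums of the Dirichlet coefficients of $\zeta_{\A}(s)^2/\zeta_{\A}(2s)$, gives the cleanest route.
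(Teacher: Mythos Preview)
Your proposal is correct, and the first-moment argument is identical to what the paper does for its own results (Propositions \ref{Contribution-Square-001}, \ref{C-non-S-001} and \S\ref{Odd-F-M-SS3}). Note, however, that the theorem you were asked about is merely \emph{quoted} from \cite{AK13} in this paper; the present paper proves the analogous $\mb P_{2g+2}$ statements instead.

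For the second moment your route and the paper's diverge in organisation. You square the approximate functional equation for $L$ and obtain four double sums over pairs $(f_{1},f_{2})$, then parametrise the diagonal $f_{1}f_{2}=\square$ as $f_{1}=ar^{2}$, $f_{2}=as^{2}$ with $a$ squarefree. The paper (Lemma \ref{A-FES} and Proposition \ref{CSSM-001}) instead writes down an approximate functional equation for $L^{2}$ itself, so that the whole second moment is carried by the single coefficient sequence $B_{K}(n)=\sum_{f\in\A_{n}^{+}}d(f)\chi_{K}(f)$; the diagonal then reduces to the closed-form asymptotic $\sum_{L\in\A_{l}^{+}}d(L^{2})=\tfrac{1}{2\zeta_{\A}(2)}\,l^{2}q^{l}+O(lq^{l})$ (Lemma \ref{DIVISOR}). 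Your four sums and the paper's two sums compute the same quantity and both yield the constant $1/(24\,\zeta_{\A}(2))$, but the paper's packaging avoids the boundary bookkeeping over $(g_{1},g_{2})\in\{g,g-1\}^{2}$ and makes the Dirichlet-series identity explicit.

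Two small corrections: your off-diagonal bound for the second moment is $O(|P|\log_{q}|P|)$, not $O(|P|)$ (Proposition \ref{bound} gives a factor $n$ which, after the geometric summation, costs a logarithm --- compare Proposition \ref{CNSSM-001}); this is harmless since it matches the stated error. Also, the Dirichlet series governing the diagonal is $\zeta_{\A}(s)^{3}/\zeta_{\A}(2s)$ (generating $d(L^{2})$), not $\zeta_{\A}(s)^{2}/\zeta_{\A}(2s)$; see Remark \ref{REM-DIVISOR}.
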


The results of Andrade and Keating correspond to the average of quadratic Dirichlet $L$--function
associated to the imaginary quadratic function field $k(\sqrt{P})$,
i.e., it is the function field analogue of the Problem \ref{mainprob} with $v=3$.
In this paper we extend the results of Andrade and Keating by establishing the corresponding asymptotic formulas
for the case of quadratic Dirichlet $L$--functions associated to the real quadratic function field $k(\sqrt{P})$,
which is the function field analogue of the Problem \ref{mainprob} with $v=1$. It is again worth noting that
in the classical case (number fields) only asymptotics formulas for the first moment of this family are known.
But in function fields, we can do better by establishing the second moment.
We also establish the corresponding asymptotic formulas for the case of quadratic Dirichlet $L$--functions
associated to the inert imaginary quadratic function field $k(\sqrt{\gamma P})$.
In addition, we derive asymptotic formulas for the mean values of quadratic Dirichlet prime $L$--functions at $s=1$ and $s=2$,
which are connected to the mean values of the ideal class numbers and to the cardinalities of second $K$-groups.
Our main results are presented below. 

A \textit{prime $L$--function} is the $L$--function associated to the quadratic character $\chi_{P}$ where $P$ is a prime polynomial.
For the first moment of prime $L$--functions, we have the following theorem.

\begin{thm}\label{F-M-Theorem}
Let $\mathbb{F}_{q}$ be a fixed finite field with $q$ being a power of an odd prime.
\begin{enumerate}
\item
For $s \in \CC$ with ${\rm Re}(s)\ge \frac12$, we have
\begin{align}
\sum_{P\in\mb P_{2g+1}} L(s,\chi_{P}) = I_{g}(s) \frac{|P|}{\log_{q}|P|}
+ \begin{cases}
O(|P|^{1-\frac{s}2}) & \text{ if ${\rm Re}(s) < 1$}, \vspace{0.1em}\\
O(|P|^{\frac{1}2} (\log_{q}|P|)) & \text{ if ${\rm Re}(s) \ge 1$,}
\end{cases}
\end{align}
as $g\to\infty$ with
\begin{align}
I_{g}(s) := \begin{cases}
g + 1 & \text{ if $s = \frac{1}2$,} \\
\zeta_{\A}(2s) (1- q^{(1+g)(1-2s)}) & \text{ if $\frac{1}2 \le {\rm Re}(s) < 1 (s \ne \frac{1}2)$.} \\
\zeta_{\A}(2s) & \text{ if ${\rm Re}(s) \ge 1$.}
\end{cases}
\end{align}

\item
For any $\epsilon>0$ and for $s \in \CC$ with ${\rm Re}(s)\ge \frac12$ and $|s-1|>\epsilon$, we have
\begin{align}
\sum_{P\in\mb P_{2g+2}} L(s,\chi_{P}) = J_{g}(s) \frac{|P|}{\log_{q}|P|}
+ \begin{cases}
O(|P|^{1-\frac{s}2}) & \text{ if ${\rm Re}(s) < 1$}, \vspace{0.1em} \\
O(|P|^{\frac{1}2} (\log_{q}|P|)) & \text{ if ${\rm Re}(s) \ge 1, \ \ \mathrm{and} \ \ ~(s\ne 1)$,}
\end{cases}
\end{align}
as $g\to\infty$ with
\begin{align}
J_{g}(s) := \begin{cases}
g + 1 + \zeta_{\A}(\frac{1}2) & \text{ if $s = \frac{1}2$,} \\
\zeta_{\A}(2s) J'_{g}(s) - \zeta_{\A}(2) J_{g}^{*}(s) & \text{ if $\frac{1}2 \le {\rm Re}(s) < 1 ~(s\ne \frac{1}2)$,}  \\
\zeta_{\A}(2s) J''_{g}(s) - \zeta_{\A}(2) J_{g}^{*}(s) & \text{ if $1 \le {\rm Re}(s) < \frac{3}2 ~(s\ne 1)$,} \\
\zeta_{\A}(2s) & \text{ if ${\rm Re}(s) \ge \frac{3}2$,}
\end{cases}
\end{align}
where
\begin{align}
J'_{g}(s) &= 1 - q^{([\frac{g}2+1])(1-2s)} + \tfrac{\zeta_{\A}(2-s)}{\zeta_{\A}(1+s)} \left(q^{(g-[\frac{g-1}2])(1-2s)} - q^{(g+1)(1-2s)}\right), \\
J''_{g}(s) &= 1 - q^{([\frac{g}2+1])(1-2s)} + \tfrac{\zeta_{\A}(2-s)}{\zeta_{\A}(1+s)} q^{(g-[\frac{g-1}2])(1-2s)}, \\
J_{g}^{*}(s) &= q^{[\frac{g}2]-(g+1)s} + \tfrac{\zeta_{\A}(2-s)}{\zeta_{\A}(1+s)} q^{[\frac{g-1}2]-gs},
\end{align}
and, for $s=1$, we have
\begin{align}
\sum_{P\in\mb P_{2g+2}} L(1,\chi_{P}) = \zeta_{\A}(2) \frac{|P|}{\log_{q}|P|} + O\left(|P|^{\frac{1}2} (\log_{q}|P|)\right).
\end{align}

\item
For $s \in \CC$ with ${\rm Re}(s)\ge \frac12$, we have
\begin{align}
\sum_{P\in\mb P_{2g+2}} L(s,\chi_{\gamma {P}}) = K_{g}(s) \frac{|P|}{\log_{q}|P|} + \begin{cases}
O(|P|^{1-\frac{s}2}) & \text{ if ${\rm Re}(s) < 1$}, \vspace{0.1em}\\
O(|P|^{\frac{1}2} (\log_{q}|P|)) & \text{ if ${\rm Re}(s) \ge 1$,}
\end{cases}
\end{align}
as $g\to\infty$ with
\begin{align}
K_{g}(s) := \begin{cases}
g+1+\zeta_{\A}(0) \zeta_{\A}(\frac{1}2)^{-1} & \text{ if $s=\frac{1}2$,} \\
\zeta_{\A}(2s) K'_{g}(s) + \zeta_{\A}(2) K^{*}_{g}(s) & \text{ if $\frac{1}2 \le {\rm Re}(s) < 1$ $(s\ne\frac{1}2)$,}  \\
\zeta_{\A}(2s) K''_{g}(s) + \zeta_{\A}(2) K^{*}_{g}(s) & \text{ if $1 \le {\rm Re}(s) < \frac{3}2$,}  \\
\zeta_{\A}(2s) & \text{ if $\frac{3}2 \le {\rm Re}(s)$,}
\end{cases}
\end{align}
where
\begin{align}
K'_{g}(s) &= 1 - q^{([\frac{g}2+1])(1-2s)} + \tfrac{1+q^{-s}}{1+q^{s-1}} \left(q^{(g-[\frac{g-1}2])(1-2s)} - q^{(g+1)(1-2s)}\right), \\
K''_{g}(s) &= 1 - q^{([\frac{g}2+1])(1-2s)} + \tfrac{1+q^{-s}}{1+q^{s-1}} q^{(g-[\frac{g-1}2])(1-2s)}, \\
K_{g}^{*}(s) &= (-1)^{g} q^{[\frac{g}2]-(g+1)s} + (-1)^{g+1}\tfrac{1+q^{-s}}{1+q^{s-1}} q^{[\frac{g-1}2]-gs}.
\end{align}
\end{enumerate}
\end{thm}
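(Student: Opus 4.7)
The plan is a common scheme for all three parts: (i) write $\mc{L}^*(z,\chi_P)$ via the approximate functional equation that comes from the identity $\mc{L}^*(z,\chi_P) = (qz^2)^{g_P} \mc{L}^*(1/(qz),\chi_P)$ together with the polynomial nature of $\mc{L}^*$; (ii) swap the outer sum over $P$ with the inner Dirichlet-series sum over monic $f$; (iii) split each inner sum $\sum_P \chi_P(f)$ into the diagonal contribution from squares $f = h^2$ (producing the main term) and the off-diagonal contribution from non-squares, which is controlled by Proposition \ref{bound}. The diagonal always reduces to geometric sums in $qz^2 = q^{1-2s}$ and produces the $\zeta_\A(2s)$ factors visible in $I_g, J_g, K_g$, while Proposition \ref{bound} produces the $O(|P|^{1-s/2})$ error on $\mathrm{Re}(s) < 1$.

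For Part (1), $L(s,\chi_P) = \mc{L}^*(z,\chi_P)$ has coefficients $A_n(\chi_P) = \sum_{f \in \A_n^+}\chi_P(f)$ satisfying $A_{2g-n} = q^{g-n}A_n$. Any monic square of degree $\leq g$ has square root of degree $\leq g/2 < \deg P$, so $P \nmid h$ and $\chi_P(h^2)=1$, giving $\sum_P \chi_P(h^2) = \#\mb{P}_{2g+1}$. After combining both halves of the approximate functional equation the diagonal collapses telescopically to $\#\mb{P}_{2g+1}\sum_{k=0}^{g}(qz^2)^k$, which equals $(g+1)\#\mb{P}_{2g+1}$ at $s=\tfrac12$ and $\zeta_\A(2s)(1-q^{(g+1)(1-2s)})\#\mb{P}_{2g+1}$ otherwise; the secondary factor $q^{(g+1)(1-2s)}$ is absorbed into $O(|P|^{1/2}\log_q|P|)$ for $\mathrm{Re}(s)\geq 1$. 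The off-diagonal contribution, by Proposition \ref{bound}, is $O(q^{g(2-\mathrm{Re}(s))+1/2})$, comparable to $|P|^{1-s/2}$; for $\mathrm{Re}(s) \geq 1$ the dominant error $O(|P|^{1/2}\log_q|P|)$ comes instead from the prime polynomial theorem applied to the main-term factor $\#\mb{P}_{2g+1}$.

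Parts (2) and (3) use the same approximate functional equation applied to $\mc{L}^*$, whose coefficients are now the partial sums $B_n(\chi_P) = \sum_{\deg f\leq n}\chi_P(f)$ satisfying $B_{2g-n} = q^{g-n}B_n$; the square contribution to $B_n$ is $\sum_{m=0}^{\lfloor n/2\rfloor}q^m = (q^{\lfloor n/2\rfloor+1}-1)/(q-1)$. Next, writing $L(s,\chi_P) = (1-z)\mc{L}^*(z,\chi_P)$ (respectively $L(s,\chi_{\gamma P}) = (1+z)\mc{L}^*(z,\chi_{\gamma P})$) and swapping the orders of the $n$- and $m$-summations, the inner geometric sum $\sum_{n=2m}^{g} z^n = (z^{2m}-z^{g+1})/(1-z)$ has a denominator cancelling cleanly against $(1-z)$ (and analogously after sign changes for $(1+z)$). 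This leaves two pieces of the main term: a geometric series $\sum_m(qz^2)^m$ which, combined with the dual-half analogue, yields $\zeta_\A(2s)J'_g(s)$ (or $K'_g(s)$); and a boundary remainder $z^{g+1}\cdot(q^{\lfloor g/2\rfloor+1}-1)/(q-1)$ which assembles into $-\zeta_\A(2)J_g^*(s)$ (or $+\zeta_\A(2)K_g^*(s)$). In Part (3) the relation $\chi_{\gamma P}(f) = (-1)^{\deg f}\chi_P(f)$ inserts signs $(-z)^n,(-qz)^\ell$ into these sums, replacing $\zeta_\A(2-s)/\zeta_\A(1+s)$ by $(1+q^{-s})/(1+q^{s-1})$ and producing the $(-1)^g, (-1)^{g+1}$ in $K_g^*$. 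The piecewise definitions over $\mathrm{Re}(s) \in [\tfrac12,1)$, $[1,\tfrac32)$ and $[\tfrac32,\infty)$ simply record which among the subleading exponentials $q^{(g+1)(1-2s)}, q^{g(1-2s)}, q^{-(g+1)s}$ remain larger than the prescribed error and must be retained in the main term.

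The main technical obstacle will be the combinatorial bookkeeping in Parts (2) and (3): the boundary remainders from both halves of the approximate functional equation must be combined with precisely the right signs and $\zeta_\A$-ratios to assemble the prescribed $J_g^*$ and $K_g^*$. A secondary subtlety is the treatment of $s = \tfrac12$ (where $qz^2=1$ forces a pole in $\zeta_\A(2s)$ to cancel against a zero of $(1-q^{(g+1)(1-2s)})$, leaving $g+1$) and, in Part (2), of $s = 1$ (where the factor $\zeta_\A(2-s)/\zeta_\A(1+s)$ in $J''_g$ develops a pole that must be resolved by a direct computation, hence the separate clean statement at $s=1$).
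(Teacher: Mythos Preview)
Your approach is essentially the same as the paper's. The paper packages step (i) as a separate lemma (the ``approximate functional equation'' for $L(s,\chi_K)$ in each of the three cases, obtained exactly as you describe: apply the functional equation to the polynomial $\mc L^*$ and then multiply back by $1-z$ or $1+z$), and then carries out the square/non-square split you outline, with Proposition~\ref{bound} controlling the off-diagonal and the Prime Polynomial Theorem handling the diagonal; the organization into the pieces $A_g(s), B_g(s), C_h(s)$ etc.\ matches your ``geometric series'' and ``boundary remainder'' terms.

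One small correction worth noting before you write it up: for ${\rm Re}(s)\ge 1$ the error $O(|P|^{1/2}\log_q|P|)$ does \emph{not} come from the PNT error in $\#\mb P_{2g+1}$ (that contributes only $O(|P|^{1/2})$, since the main-term coefficient $I_g(s)$ is bounded once ${\rm Re}(s)\ge 1$). It comes from the off-diagonal: when ${\rm Re}(s)\ge 1$ the sum $\sum_{n\le g} n\,q^{(1-s)n}$ is $\ll g^2$ rather than $\ll g\,q^{(1-s)g}$, and this extra factor of $g$ against the Weil bound $|P|^{1/2}/\log_q|P|$ is what produces the $\log_q|P|$.
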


\begin{rem}
Note that in many of our estimates (e.g. when we use the $O$ and $\ll$ notations) the implied constant may depend on $q$.
\end{rem}

From Theorem \ref{F-M-Theorem} (2) and (3), we have the following corollary.
\begin{cor}
Let $q$ be a fixed power of an odd prime.
For every $\varepsilon>0$, we have
\begin{enumerate}
\item
\begin{align}
\sum_{P\in\mb P_{2g+2}} (\log_{q}|P|) L(\tfrac{1}2,\chi_{P})
= \frac{1}{2} \left(\log_{q}|P|  + 2\zeta_{\A}(\tfrac{1}2)\right) |P| + O\left(|P|^{\frac{3}4+\varepsilon}\right),
\end{align}
\item
\begin{align}
\sum_{P\in\mb P_{2g+2}} (\log_{q}|P|) L(\tfrac{1}2,\chi_{\gamma P})
= \frac{1}{2} \left(\log_{q}|P|  + 2\zeta_{\A}(0) \zeta_{\A}(\tfrac{1}2)^{-1}\right) |P| + O\left(|P|^{\frac{3}4+\varepsilon}\right),
\end{align}
\end{enumerate}
as $g\to\infty$.
\end{cor}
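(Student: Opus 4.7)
The plan is to obtain both statements as a direct specialization of Theorem \ref{F-M-Theorem} at $s=\tfrac{1}{2}$, followed by multiplication of the whole asymptotic identity by the factor $\log_{q}|P|$. No new arithmetic input is needed: the work has already been done in establishing the first-moment asymptotics in parts (2) and (3), and what remains is a bookkeeping exercise to rewrite the main terms in the form stated, and to check that the error term is absorbed by $|P|^{3/4+\varepsilon}$.

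For part (1), I would start from part (2) of Theorem \ref{F-M-Theorem} with $s=\tfrac{1}{2}$. Since $\mathrm{Re}(s)=\tfrac{1}{2}<1$, the error term is $O(|P|^{1-s/2})=O(|P|^{3/4})$, and the main coefficient is $J_{g}(\tfrac{1}{2})=g+1+\zeta_{\A}(\tfrac{1}{2})$. Because $P\in\mb{P}_{2g+2}$ means $|P|=q^{2g+2}$ and hence $\log_{q}|P|=2g+2$, we can write
\begin{equation*}
g+1 = \tfrac{1}{2}\log_{q}|P|, \qquad J_{g}(\tfrac{1}{2}) = \tfrac{1}{2}\log_{q}|P| + \zeta_{\A}(\tfrac{1}{2}).
\end{equation*}
Substituting this into the formula of Theorem \ref{F-M-Theorem}(2) and multiplying both sides by $\log_{q}|P|$ yields
\begin{equation*}
\sum_{P\in\mb{P}_{2g+2}}(\log_{q}|P|)L(\tfrac{1}{2},\chi_{P}) = \tfrac{1}{2}\bigl(\log_{q}|P| + 2\zeta_{\A}(\tfrac{1}{2})\bigr)|P| + O\!\left(|P|^{3/4}\log_{q}|P|\right),
\end{equation*}
after which the error $O(|P|^{3/4}\log_{q}|P|)$ is absorbed into $O(|P|^{3/4+\varepsilon})$ for any fixed $\varepsilon>0$, since $\log_{q}|P|\ll |P|^{\varepsilon}$.

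For part (2) I would carry out an identical procedure starting from part (3) of Theorem \ref{F-M-Theorem}, using $K_{g}(\tfrac{1}{2})=g+1+\zeta_{\A}(0)\zeta_{\A}(\tfrac{1}{2})^{-1}$ in place of $J_{g}(\tfrac{1}{2})$. Substituting $g+1=\tfrac{1}{2}\log_{q}|P|$, multiplying by $\log_{q}|P|$, and absorbing the logarithmic factor into $|P|^{\varepsilon}$ gives the stated formula immediately.

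There is essentially no main obstacle: the whole corollary is a routine bookkeeping consequence of the first-moment theorem. The only point where one must be a little careful is the absorption $\log_{q}|P|\ll_{\varepsilon}|P|^{\varepsilon}$ that turns the error $|P|^{3/4}\log_{q}|P|$ into $|P|^{3/4+\varepsilon}$; any genuine analytic work has already been done in the proof of Theorem \ref{F-M-Theorem}.
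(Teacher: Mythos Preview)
Your proposal is correct and matches the paper's approach exactly: the corollary is stated in the paper as an immediate consequence of Theorem~\ref{F-M-Theorem} (2) and (3), obtained precisely by specializing at $s=\tfrac{1}{2}$, rewriting $g+1=\tfrac{1}{2}\log_q|P|$, and multiplying through by $\log_q|P|$. Your handling of the error term, absorbing $\log_q|P|$ into $|P|^{\varepsilon}$, is the standard bookkeeping step and is all that is needed.
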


For the second moment of prime $L$--functions at $s=\frac{1}2$, we have the following theorem.

\begin{thm}\label{S-M-Theorem}
Let $q$ be a fixed power of an odd prime.
As $g\to\infty$, we have that
\begin{enumerate}
\item
\begin{align}
\sum_{P\in\mb P_{2g+2}} L(\tfrac{1}2,\chi_{P})^{2}
= \frac{1}{24 \zeta_{\A}(2)} |P| (\log_{q}|P|)^2  + O\left(|P| (\log_{q}|P|)\right),
\end{align}
\item
\begin{align}
\sum_{P\in\mb P_{2g+2}} L(\tfrac{1}2,\chi_{\gamma{P}})^{2}
= \frac{1}{24 \zeta_{\A}(2)} |P| (\log_{q}|P|)^2  + O\left(|P| (\log_{q}|P|)\right).
\end{align}
\end{enumerate}
\end{thm}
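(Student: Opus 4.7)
My plan is to adapt the approximate functional equation (AFE) argument of Andrade--Keating from the odd-degree prime case, now accounting for the extra trivial zero of $\mc L(z,\chi_{P})$ at $z=\pm 1$ that appears when $\deg P$ is even. For part (1), write $\mc L^{*}(z,\chi_{P})=(1-z)^{-1}\mc L(z,\chi_{P})$, and for part (2), $\mc L^{*}(z,\chi_{\gamma P})=(1+z)^{-1}\mc L(z,\chi_{\gamma P})$; each is a polynomial of degree $2g$ satisfying the clean functional equation \eqref{functional equation}. Squaring, $\mc L^{*}(z,\chi)^{2}$ is a polynomial of degree $4g$ whose coefficients $b_{n}^{*}$ satisfy the symmetry $b_{n}^{*}=q^{n-2g}b_{4g-n}^{*}$, and specializing at $z=q^{-1/2}$ yields
\begin{equation*}
\mc L^{*}(q^{-1/2},\chi)^{2}=2\sum_{n=0}^{2g-1}b_{n}^{*}(\chi)\,q^{-n/2}+b_{2g}^{*}(\chi)\,q^{-g},
\end{equation*}
combined with $L(\tfrac{1}{2},\chi_{P})^{2}=(1-q^{-1/2})^{2}\mc L^{*}(q^{-1/2},\chi_{P})^{2}$ and the analogous $(1+q^{-1/2})^{2}$-factor for $\chi_{\gamma P}$.

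Next, I unfold $b_{n}^{*}=\sum_{n_{1}+n_{2}=n}c_{n_{1}}^{*}c_{n_{2}}^{*}$ with $c_{n}^{*}(\chi_{P})=\sum_{f\in\A^{+},\,\deg f\le n}\chi_{P}(f)$ (and the alternating-sign analogue $c_{n}^{*}(\chi_{\gamma P})=\sum_{f}(-1)^{n-\deg f}\chi_{\gamma P}(f)$), reindex by $h=f_{1}f_{2}$, and for $n\le 2g-1$ obtain $b_{n}^{*}(\chi_{P})=\sum_{h:\deg h\le n}(n-\deg h+1)\,d(h)\chi_{P}(h)$. Interchanging the sum over $P\in\mb P_{2g+2}$ with the $h$-sum and summing the geometric series $\sum_{m\ge 0}(m+1)(\pm q^{-1/2})^{m}=(1\mp q^{-1/2})^{-2}$ so that the $(1\mp q^{-1/2})^{2}$ prefactor cancels, one arrives at
\begin{equation*}
\sum_{P\in\mb P_{2g+2}}L(\tfrac{1}{2},\chi_{P})^{2}=2\!\!\sum_{h\in\A^{+},\,\deg h\le 2g}\!\!\frac{d(h)}{|h|^{1/2}}\sum_{P\in\mb P_{2g+2}}\chi_{P}(h)+(\text{boundary}),
\end{equation*}
and an identical schematic identity for $\chi_{\gamma P}$, the alternating signs being absorbed into the geometric factor.

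Now split into diagonal and off-diagonal contributions. For squares $h=l^{2}$ with $\deg l\le g<\deg P$, both $\chi_{P}(l^{2})=1$ and $\chi_{\gamma P}(l^{2})=(\gamma/l)^{2}\cdot 1=1$, so the diagonal contributes $2\cdot\#\mb P_{2g+2}\cdot\sum_{l:\deg l\le g}d(l^{2})/|l|$ in both cases. The Euler product $\sum_{l}d(l^{2})|l|^{-s}=\zeta_{\A}(s)^{3}/\zeta_{\A}(2s)$, which in $z=q^{-s}$ is the rational function $(1-qz^{2})/(1-qz)^{3}$, allows one to extract
\begin{equation*}
\sum_{l\in\A^{+},\,\deg l\le g}\frac{d(l^{2})}{|l|}=\frac{g^{3}}{6\,\zeta_{\A}(2)}+O(g^{2}),
\end{equation*}
and combining with Theorem \ref{thm:pnt} together with $\log_{q}|P|=2g+2$ yields the announced main term $\frac{1}{24\,\zeta_{\A}(2)}|P|(\log_{q}|P|)^{2}$. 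The off-diagonal part is controlled by Proposition \ref{bound}: for non-square $h$, $|\sum_{P}\chi_{P}(h)|\ll(\deg h/(2g+2))\,q^{g+1}$, and together with the standard divisor bound $\sum_{h:\deg h\le 2g}d(h)\deg h\,|h|^{-1/2}\ll g^{2}q^{g}$, this gives an off-diagonal error of $O(|P|\log_{q}|P|/q)$, comfortably within $O(|P|\log_{q}|P|)$.

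The main technical hurdle is the bookkeeping of boundary contributions from the AFE: the middle coefficient $b_{2g}^{*}$, the truncation correction for the geometric series at $m=2g-1-\deg h$, and the off-diagonal character sums near $\deg h=2g$; each must be shown individually to be $O(|P|\log_{q}|P|)$, relying on Proposition \ref{bound} and elementary estimates of divisor sums. The parallel treatment of parts (1) and (2) reflects the fact that the completion prefactor $(1\mp q^{-1/2})^{2}$ and the (alternating) geometric series $(1\mp q^{-1/2})^{-2}$ conspire to give the same diagonal coefficient, so the leading constant $1/(24\,\zeta_{\A}(2))$ is identical in both cases.
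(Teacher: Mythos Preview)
Your proposal is correct and follows essentially the same route as the paper: both derive an approximate functional equation for $L(\tfrac12,\chi)^2$ from the symmetry of the coefficients of $\mc L^{*}(z,\chi)^2$, split the resulting divisor-weighted character sum into squares and non-squares, extract the main term from $\sum_{\deg l\le g} d(l^2)/|l|\sim g^3/(6\zeta_{\A}(2))$, and bound the off-diagonal via Proposition~\ref{bound}. The only difference is organizational---the paper records the AFE as the explicit six-term identity of Lemma~\ref{A-FES}(2),(3) and then bounds each of the four secondary terms separately (Propositions~\ref{CSSM-001}(2),(3) and~\ref{CNSSM-001}(2),(3)), which is exactly the ``boundary bookkeeping'' you flag as the remaining technical hurdle; your geometric-series cancellation $(1\mp q^{-1/2})^{2}\sum_{m\ge 0}(m+1)(\pm q^{-1/2})^{m}=1$ is just a repackaging of the same algebra.
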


From Theorem \ref{F-M-Theorem} (2), (3), Theorem \ref{S-M-Theorem} and \eqref{expected value}, we obtain the following corollaries. 
Observe that corollary \ref{Non-Zero} below is about non-vanishing results of quadratic Dirichlet $L$-functions over function fields.

\begin{cor}\label{Odd-Ensemble}
With $q$ kept fixed power of an odd prime and $g\rightarrow\infty$, we have
\begin{align}
\left\langle L(\tfrac{1}{2},\chi_{P})\right\rangle_{2g+2}&\sim\frac{1}{2}\left(\log_{q}|P|+2\zeta_{\A}(\tfrac{1}2)\right), \\
\left\langle L(\tfrac{1}{2},\chi_{\gamma P})\right\rangle_{2g+2}
&\sim\frac{1}{2}\left(\log_{q}|P|+2\zeta_{\A}(0)\zeta_{\A}(\tfrac{1}2)^{-1}\right)
\end{align}
and
\begin{align}
\left\langle L(\tfrac{1}{2},\chi_{P})^{2}\right\rangle_{2g+2}&\sim\frac{1}{24\zeta_{\A}(2)}(\log_{q}|P|)^{3},\\
\left\langle L(\tfrac{1}{2},\chi_{\gamma P})^{2}\right\rangle_{2g+2}&\sim\frac{1}{24\zeta_{\A}(2)}(\log_{q}|P|)^{3}.
\end{align}
\end{cor}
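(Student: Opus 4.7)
The plan is to combine the first-moment formulae of Theorem \ref{F-M-Theorem} (2) and (3), the second-moment formulae of Theorem \ref{S-M-Theorem}, and the asymptotic relation \eqref{expected value}, which together with the Prime Polynomial Theorem \ref{thm:pnt} says $\langle F(P)\rangle_{n} \sim (\log_{q}|P|/|P|) \sum_{P\in \mb P_{n}} F(P)$. The key elementary observation used throughout is that for $P \in \mb P_{2g+2}$ one has $|P|=q^{2g+2}$, so $\log_{q}|P|=2g+2$ and hence $g+1=\tfrac{1}{2}\log_{q}|P|$; this is what converts the arithmetic dependence on $g$ appearing in the main terms of Theorem \ref{F-M-Theorem} into the $\log_{q}|P|$-shape asserted by the corollary.

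First I would handle the two first-moment averages. Specialising Theorem \ref{F-M-Theorem} (2) at $s=\tfrac{1}{2}$ gives
\begin{equation*}
\sum_{P\in\mb P_{2g+2}} L(\tfrac{1}{2},\chi_{P}) = \bigl(g+1+\zeta_{\A}(\tfrac{1}{2})\bigr)\,\frac{|P|}{\log_{q}|P|} + O\bigl(|P|^{3/4}\bigr),
\end{equation*}
and Theorem \ref{F-M-Theorem} (3) yields the analogous expression with main term $\bigl(g+1+\zeta_{\A}(0)\zeta_{\A}(\tfrac{1}{2})^{-1}\bigr)|P|/\log_{q}|P|$. Multiplying through by $\log_{q}|P|/|P|$ (the factor coming from \eqref{expected value}) and substituting $g+1=\tfrac{1}{2}\log_{q}|P|$ converts the main term into $\tfrac{1}{2}(\log_{q}|P| + 2\zeta_{\A}(\tfrac{1}{2}))$, respectively $\tfrac{1}{2}(\log_{q}|P| + 2\zeta_{\A}(0)\zeta_{\A}(\tfrac{1}{2})^{-1})$, with an error $O(|P|^{-1/4}\log_{q}|P|)$ that tends to zero as $g\to\infty$. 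This immediately produces the first two asymptotics of the corollary.

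Next I would treat the two second-moment averages in the same way. Theorem \ref{S-M-Theorem} gives
\begin{equation*}
\sum_{P\in\mb P_{2g+2}} L(\tfrac{1}{2},\chi_{P})^{2} = \frac{1}{24\zeta_{\A}(2)}\,|P|(\log_{q}|P|)^{2} + O\bigl(|P|\log_{q}|P|\bigr),
\end{equation*}
and an identical identity for $\chi_{\gamma P}$. Multiplication by $\log_{q}|P|/|P|$ turns the main term into $\tfrac{1}{24\zeta_{\A}(2)}(\log_{q}|P|)^{3}$ and reduces the error to $O((\log_{q}|P|)^{2})$, which is of strictly lower order and therefore vanishes under the $\sim$ asymptotic. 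This yields the remaining two statements.

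I do not anticipate a genuine obstacle here, since the analytic difficulty is already packaged into Theorems \ref{F-M-Theorem} and \ref{S-M-Theorem}; the proof is essentially bookkeeping built around the substitution $g+1=\tfrac{1}{2}\log_{q}|P|$ and the routine verification that each error term $O(|P|^{3/4})$ or $O(|P|\log_{q}|P|)$ becomes negligible after the factor $\log_{q}|P|/|P|$ from the prime hyperelliptic ensemble is applied.
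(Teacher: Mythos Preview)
Your proposal is correct and follows exactly the approach indicated by the paper, which simply states that the corollary follows from Theorem \ref{F-M-Theorem} (2), (3), Theorem \ref{S-M-Theorem}, and the expected-value relation \eqref{expected value}. Your explicit verification via the substitution $g+1=\tfrac{1}{2}\log_{q}|P|$ and the error bookkeeping is precisely the routine computation the paper leaves to the reader.
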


\begin{cor}\label{Non-Zero}
With $q$ kept fixed power of an odd prime and $g\rightarrow\infty$, we have
\begin{equation}\label{Non-Zero-1}
\sum_{\substack{P\in\mb{P}_{2g+2} \\ L(\tfrac{1}{2},\chi_{P})\neq0}} 1 \gg \frac{|P|}{(\log_{q}|P|)^{2}}
\end{equation}
and
\begin{equation}\label{Non-Zero-2}
\sum_{\substack{P\in\mb{P}_{2g+2} \\ L(\tfrac{1}{2},\chi_{\gamma P})\neq0}} 1 \gg \frac{|P|}{(\log_{q}|P|)^{2}}.
\end{equation}
\end{cor}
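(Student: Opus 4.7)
The plan is a standard Cauchy--Schwarz argument, using the first moment asymptotic as a lower bound on $\big(\sum L(\tfrac12,\chi_P)\big)^2$ and the second moment as an upper bound.

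First I would record the two inputs. From Theorem \ref{F-M-Theorem}(2) at $s=\tfrac12$, since $J_{g}(\tfrac12)=g+1+\zeta_{\A}(\tfrac12)$ and $2g+2=\log_{q}|P|$, dividing through by $\log_{q}|P|$ (or using the corollary just above) yields
\begin{equation*}
\sum_{P\in\mb P_{2g+2}} L(\tfrac12,\chi_{P}) = \tfrac12 |P| + O\!\left(\frac{|P|}{\log_{q}|P|}\right),
\end{equation*}
so in particular this sum is $\gg |P|$. Similarly from Theorem \ref{F-M-Theorem}(3), $K_{g}(\tfrac12)=g+1+\zeta_{\A}(0)\zeta_{\A}(\tfrac12)^{-1}$, giving $\sum_{P\in\mb P_{2g+2}}L(\tfrac12,\chi_{\gamma P}) \gg |P|$. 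From Theorem \ref{S-M-Theorem}, the second moments in both families satisfy
\begin{equation*}
\sum_{P\in\mb P_{2g+2}} L(\tfrac12,\chi_{P})^{2} \ll |P|(\log_{q}|P|)^{2},
\end{equation*}
and likewise with $\chi_{\gamma P}$.

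Next I would apply Cauchy--Schwarz, restricting the outer sum to those $P$ with $L(\tfrac12,\chi_{P})\neq 0$ (which does not change the value of the sum):
\begin{equation*}
\Bigg(\sum_{P\in\mb P_{2g+2}} L(\tfrac12,\chi_{P})\Bigg)^{2}
= \Bigg(\sum_{\substack{P\in\mb P_{2g+2}\\ L(\tfrac12,\chi_{P})\neq 0}} L(\tfrac12,\chi_{P})\Bigg)^{2}
\le \Bigg(\sum_{\substack{P\in\mb P_{2g+2}\\ L(\tfrac12,\chi_{P})\neq 0}} 1\Bigg)
\Bigg(\sum_{P\in\mb P_{2g+2}} L(\tfrac12,\chi_{P})^{2}\Bigg).
\end{equation*}
Rearranging and substituting the two bounds above gives
\begin{equation*}
\sum_{\substack{P\in\mb P_{2g+2}\\ L(\tfrac12,\chi_{P})\neq 0}} 1
\;\gg\; \frac{|P|^{2}}{|P|(\log_{q}|P|)^{2}} \;=\; \frac{|P|}{(\log_{q}|P|)^{2}},
\end{equation*}
which is \eqref{Non-Zero-1}. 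The proof of \eqref{Non-Zero-2} is identical, using the corresponding estimates for $\chi_{\gamma P}$ from Theorem \ref{F-M-Theorem}(3) and Theorem \ref{S-M-Theorem}(2).

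There is no real obstacle here once the moments are in hand; the only thing to verify is that the first moment really is of order $|P|$ (not smaller), which requires reading $J_{g}(\tfrac12)$ and $K_{g}(\tfrac12)$ against $\log_{q}|P|=2g+2$ to see that the leading term $g+1$ dominates the constant. The quality of the non-vanishing lower bound, namely $|P|/(\log_{q}|P|)^{2}$, is exactly what one expects from this second-moment method, and matches the analogous results in the odd-degree (Andrade--Keating) setting.
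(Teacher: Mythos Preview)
Your proposal is correct and is essentially the same argument as the paper's: both use Cauchy--Schwarz with the first moment from Theorem \ref{F-M-Theorem}(2),(3) giving the lower bound $\gg |P|$ and the second moment from Theorem \ref{S-M-Theorem} giving the upper bound $\ll |P|(\log_{q}|P|)^{2}$. Your write-up is in fact slightly more detailed than the paper's, which simply records the two asymptotics and invokes Cauchy--Schwarz in a sentence.
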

\begin{proof}
We only give the proof of \eqref{Non-Zero-1}.
A similar argument will give the proof of \eqref{Non-Zero-2}.
From Theorem \ref{F-M-Theorem} (2) and Theorem \ref{S-M-Theorem} (1), we have
\begin{equation}\label{FM-001}
\sum_{P\in\mb{P}_{2g+2}}L(\tfrac{1}{2},\chi_{P})\sim c_{1}|P|
\end{equation}
and
\begin{equation}\label{SM-001}
\sum_{P\in\mb{P}_{2g+2}}L(\tfrac{1}{2},\chi_{P})^{2}\sim c_{2}|P|(\log_{q}|P|)^{2},
\end{equation}
where $c_{1}$ and $c_{2}$ are the constants given in the above theorems.
By applying Cauchy--Schwarz inequality we have that the number of monic irreducible polynomials $P\in\mb{P}_{2g+2}$
such that $L(\tfrac{1}{2},\chi_{P})\neq0$ exceeds the ratio of the square of the quantity in \eqref{FM-001}
to the quantity in \eqref{SM-001}.
\end{proof}

A simple computation shows that $K_{g}(1) = \zeta_{\A}(2)$.
For the mean value of prime $L$--functions at $s=1$, from Theorem \ref{F-M-Theorem}, we have the following:

\begin{thm}\label{fmoment-1}
Let $q$ be a fixed power of an odd prime.
For every $\varepsilon>0$, we have
\begin{enumerate}
\item
\begin{align}
\sum_{P\in\mb P_{2g+1}} L(1,\chi_{P}) = \zeta_{\A}(2) \frac{|P|}{\log_{q}|P|} + O\left(|P|^{\frac{1}2+\varepsilon}\right),
\end{align}
\item
\begin{align}\sum_{P\in\mb P_{2g+2}} L(1,\chi_{P}) = \zeta_{\A}(2) \frac{|P|}{\log_{q}|P|} + O\left(|P|^{\frac{1}2+\varepsilon}\right),
\end{align}
\item
\begin{align}
\sum_{P\in\mb P_{2g+2}} L(1,\chi_{\gamma {P}}) = \zeta_{\A}(2) \frac{|P|}{\log_{q}|P|}
+ O\left(|P|^{\frac{1}2+\varepsilon}\right),
\end{align}
\end{enumerate}
as $g\to\infty$. 
\end{thm}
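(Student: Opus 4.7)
The plan is to deduce Theorem \ref{fmoment-1} as a direct specialization of Theorem \ref{F-M-Theorem} to the point $s = 1$. In each of the three parts, the bound $O(|P|^{1/2}\log_q|P|)$ supplied by Theorem \ref{F-M-Theorem} is absorbed into $O(|P|^{1/2+\varepsilon})$ using the elementary estimate $\log_q|P| \ll_\varepsilon |P|^\varepsilon$.

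Part (1) is immediate: the point $s=1$ lies in the range $\mathrm{Re}(s)\ge 1$ of Theorem \ref{F-M-Theorem}(1), where the main coefficient is $I_g(1)=\zeta_\A(2)$. The stated error term follows at once.

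Part (2) is literally the separate $s=1$ display at the end of Theorem \ref{F-M-Theorem}(2); it is split off there precisely because the generic formula is quoted under the hypothesis $|s-1|>\varepsilon$. The error $O(|P|^{1/2}\log_q|P|)$ becomes $O(|P|^{1/2+\varepsilon})$.

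Part (3) requires one small computation, namely the assertion $K_g(1)=\zeta_\A(2)$ stated just before Theorem \ref{fmoment-1}. At $s=1$ one is in the range $1\le \mathrm{Re}(s)<3/2$, so from Theorem \ref{F-M-Theorem}(3),
\[
K_g(1) \;=\; \zeta_\A(2)\bigl(K''_g(1) + K^*_g(1)\bigr),
\]
and it suffices to verify $K''_g(1)+K^*_g(1)=1$. Substituting $s=1$ into the definitions, the factor $\tfrac{1+q^{-s}}{1+q^{s-1}}$ becomes $\tfrac{1+q^{-1}}{2}$, and the four surviving $q$-powers pair up via the identifications $-[g/2+1]\leftrightarrow [g/2]-(g+1)$ and $-(g-[(g-1)/2])\leftrightarrow [(g-1)/2]-g$. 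The main (and only nontrivial) bookkeeping point is that the signs $(-1)^g,(-1)^{g+1}$ in $K^*_g(1)$ are exactly those needed to cancel the $1-q^{-[g/2+1]}$ and $\tfrac{1+q^{-1}}{2}q^{-(g-[(g-1)/2])}$ contributions of $K''_g(1)$ in both parities of $g$; splitting into the cases $g$ even and $g$ odd, one checks the two displays separately and confirms the total equals $1$. Once $K_g(1)=\zeta_\A(2)$ is in hand, Theorem \ref{F-M-Theorem}(3) at $s=1$ yields the claim. The main (very mild) obstacle throughout is therefore the parity check in (3); (1) and (2) are purely quotation.
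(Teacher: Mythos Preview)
Your proposal is correct and follows exactly the paper's approach: the paper presents Theorem~\ref{fmoment-1} as an immediate consequence of Theorem~\ref{F-M-Theorem} at $s=1$, recording only that ``a simple computation shows that $K_{g}(1)=\zeta_{\A}(2)$''. You have supplied that computation (via the parity split), which the paper omits; your description of the exponent pairing is slightly loose (the identification $-\big([g/2]+1\big)=[g/2]-(g+1)$ holds only for even $g$), but since you explicitly finish by checking the two parities separately, the argument is complete.
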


For any non-constant square free polynomial $D\in\A$ with $sgn(D)\in \{1, \gamma\}$,
let $\OO_{D}$ be the integral closure of $\A$ in the quadratic function field $k(\sqrt{D})$.
Let $h_{D}$ be the ideal class number of $\OO_{D}$,
and $R_{D}$ be the regulator of $\OO_{D}$ if $\deg(D)$ is even and $sgn(D) = 1$.
We have a formula which connects $L(1, \chi_{D})$ with $h_{D}$ (\cite[Theorem 17.8A]{Ro02}):
\begin{align}\label{class-number-formula}
L(1, \chi_{D}) = \begin{cases}
\sqrt{q}~ |D|^{-\frac{1}2} h_{D} & \text{ if $\deg(D)$ is odd,} \vspace{0.1em}\\
(q-1) |D|^{-\frac{1}2} h_{D} R_{D} & \text{ if $\deg(D)$ is even and $sgn(D)=1$,} \vspace{0.1em}\\
\frac{1}{2} (q+1) |D|^{-\frac{1}2} h_{D} & \text{ if $\deg(D)$ is even and $sgn(D)=\gamma$.} \\
\end{cases}
\end{align}

By combining Theorem \ref{fmoment-1} and \eqref{class-number-formula}, we obtain the following corollary.

\begin{cor}
Let $q$ be a fixed power of an odd prime. 
For every $\varepsilon>0$, we have
\begin{enumerate}
\item
\begin{align}
\sum_{P\in\mb P_{2g+1}} h_{P} = \frac{\zeta_{\A}(2)}{\sqrt{q}} \frac{|P|^{\frac{3}2}}{\log_{q}|P|}
+ O\left(|P|^{1+\varepsilon}\right),
\end{align}
\item
\begin{align}
\sum_{P\in\mb P_{2g+2}} h_{P} R_{P} = q^{-1} \zeta_{\A}(2)^2 \frac{|P|^{\frac{3}2}}{\log_{q}|P|}
+ O\left(|P|^{1+\varepsilon}\right),
\end{align}
\item
\begin{align}
\sum_{P\in\mb P_{2g+2}} h_{P} = 2 q^{-1} \zeta_{\A}(3) \frac{|P|^{\frac{3}2}}{\log_{q}|P|} + O\left(|P|^{1+\varepsilon}\right),
\end{align}
\end{enumerate}
as $g\rightarrow\infty$.
\end{cor}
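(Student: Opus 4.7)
The plan is to derive each part of the corollary by combining Theorem \ref{fmoment-1} with the class-number formula \eqref{class-number-formula}, sorting the three cases by the splitting type of the infinite prime in the associated quadratic extension. In each case the class-number formula expresses $h_{D}$ (or $h_{D} R_{D}$) as a fixed scalar times $|D|^{1/2} L(1,\chi_{D})$, so summing over $P$ in the relevant family and inserting the asymptotic from Theorem \ref{fmoment-1} gives the asymptotic for the class-number sum, up to a routine simplification of constants via $\zeta_{\A}(s) = (1-q^{1-s})^{-1}$.

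For part (1), each $P\in\mb{P}_{2g+1}$ has odd degree, so $k(\sqrt{P})$ is ramified imaginary and the first line of \eqref{class-number-formula} yields $h_{P} = q^{-1/2} |P|^{1/2} L(1,\chi_{P})$. Summing and applying Theorem \ref{fmoment-1} (1), the main term is
\[
q^{-\tfrac{1}{2}} |P|^{\tfrac{1}{2}} \cdot \zeta_{\A}(2) \frac{|P|}{\log_{q}|P|} = \frac{\zeta_{\A}(2)}{\sqrt{q}} \frac{|P|^{\tfrac{3}{2}}}{\log_{q}|P|},
\]
and the error $O(|P|^{1/2+\varepsilon})$ from Theorem \ref{fmoment-1} (1), multiplied by the prefactor $q^{-1/2}|P|^{1/2}$, becomes $O(|P|^{1+\varepsilon})$.

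For part (2), each $P\in\mb{P}_{2g+2}$ is monic of even degree, so $k(\sqrt{P})$ is real and the second line of \eqref{class-number-formula} gives $h_{P}R_{P} = (q-1)^{-1}|P|^{1/2} L(1,\chi_{P})$. Summing and using Theorem \ref{fmoment-1} (2) produces the main term $(q-1)^{-1}\zeta_{\A}(2) |P|^{3/2}/\log_{q}|P|$, and the identity $\zeta_{\A}(2) = q/(q-1)$ gives $(q-1)^{-1}\zeta_{\A}(2) = q/(q-1)^{2} = q^{-1}\zeta_{\A}(2)^{2}$, which matches the claimed constant.

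For part (3), the relevant square-free polynomial is $\gamma P$: it has even degree $2g+2$ and $sgn(\gamma P)=\gamma$, so $k(\sqrt{\gamma P})$ is inert imaginary. The corollary is to be read with $h_{\gamma P}$ in place of $h_{P}$ (the associated class number of the inert imaginary field), and, since $|\gamma P|=|P|$, the third line of \eqref{class-number-formula} gives $h_{\gamma P} = 2(q+1)^{-1}|P|^{1/2}L(1,\chi_{\gamma P})$. Combining with Theorem \ref{fmoment-1} (3) yields a main term $2(q+1)^{-1}\zeta_{\A}(2) |P|^{3/2}/\log_{q}|P|$, which equals $2q^{-1}\zeta_{\A}(3)\, |P|^{3/2}/\log_{q}|P|$ in view of $\zeta_{\A}(2)=q/(q-1)$ and $\zeta_{\A}(3)=q^{2}/((q-1)(q+1))$. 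Since all three cases reduce to a direct substitution followed by a check of zeta-factor identities, there is no genuine obstacle; the only care required is the bookkeeping of the scalar factors and the observation that in part (3) the sum is naturally indexed by $P$ but the arithmetic invariant attached is $h_{\gamma P}$.
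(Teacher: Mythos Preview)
Your proposal is correct and follows exactly the approach indicated in the paper, which simply says the corollary is obtained ``by combining Theorem \ref{fmoment-1} and \eqref{class-number-formula}'' without further details. Your observation that in part (3) the summand should be read as $h_{\gamma P}$ (the class number of the inert imaginary field $k(\sqrt{\gamma P})$) is correct and clarifies a notational ambiguity in the statement.
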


For any non-constant square free polynomial $D\in\A$ with $sgn(D)\in \{1, \gamma\}$,
let $K_{2}(\OO_{D})$ be the second $K$-group of $\OO_{D}$.
We have a formula which connects $L(2,\chi_{D})$ with the cardinality $\#K_{2}(\OO_{D})$ of $K_{2}(\OO_{D})$ (\cite[Proposition 2]{Ro95}):
\begin{align}\label{second-k-group}
L(2, \chi_{D}) = \begin{cases}
q^{\frac{3}2} |D|^{-\frac{3}2} \#K_{2}(\OO_{D}) & \text{ if $\deg(D)$ is odd,} \vspace{0.1em}\\
\frac{\zeta_{\A}(2)}{\zeta_{\A}(3)} q^{2} |D|^{-\frac{3}2} \#K_{2}(\OO_{D})
& \text{ if $\deg(D)$ is even and $sgn(D)=1$,} \vspace{0.1em}\\
\frac{\zeta_{\A}(3)^2}{\zeta_{\A}(2)\zeta_{\A}(5)} q^{2} |D|^{-\frac{3}2} \#K_{2}(\OO_{D})
& \text{ if $\deg(D)$ is even and $sgn(D)=\gamma$.} \\
\end{cases}
\end{align}

For the mean value of prime $L$--functions at $s=2$, as an application of Theorem \ref{F-M-Theorem}, we have the following.

\begin{thm}\label{fmoment-2}
Let $q$ be a fixed power of an odd prime.
For every $\varepsilon>0$, we have
\begin{enumerate}
\item
\begin{align}
\sum_{P\in\mb P_{2g+1}} L(2,\chi_{P}) = \zeta_{\A}(4) \frac{|P|}{\log_{q}|P|} + O\left(|P|^{\frac{1}2+\varepsilon}\right),
\end{align}
\item
\begin{align}
\sum_{P\in\mb P_{2g+2}} L(2,\chi_{P}) = \zeta_{\A}(4) \frac{|P|}{\log_{q}|P|} + O\left(|P|^{\frac{1}2+\varepsilon}\right),
\end{align}
\item
\begin{align}
\sum_{P\in\mb P_{2g+2}} L(2,\chi_{\gamma {P}}) = \zeta_{\A}(4) \frac{|P|}{\log_{q}|P|}
+ O\left(|P|^{\frac{1}2+\varepsilon}\right),
\end{align}
\end{enumerate}
as $g\to\infty$. 
\end{thm}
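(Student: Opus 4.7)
The plan is to deduce this theorem directly from Theorem \ref{F-M-Theorem} by specializing at $s = 2$. The point $s = 2$ lies safely in the half-plane ${\rm Re}(s) \ge \tfrac{3}{2}$, and in particular it avoids all of the special values ($s = \tfrac{1}2$, $s = 1$, and the boundary $s = \tfrac{3}2$) at which the main-term coefficients $I_g$, $J_g$, $K_g$ exhibit their piecewise definitions or accommodate extra boundary contributions. So all three parts will reduce to a completely uniform statement.

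First I would check each of the three coefficients. In part (1), since ${\rm Re}(s) = 2 \ge 1$, we have $I_g(2) = \zeta_{\A}(4)$ directly from the definition. In parts (2) and (3), since ${\rm Re}(s) = 2 \ge \tfrac{3}2$, we read off $J_g(2) = \zeta_{\A}(4)$ and $K_g(2) = \zeta_{\A}(4)$ from the top branches of the piecewise definitions. Thus the main term in each case equals $\zeta_{\A}(4) \, |P|/\log_q|P|$, matching the claimed formula.

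Second, I would examine the error term. For ${\rm Re}(s) \ge 1$, Theorem \ref{F-M-Theorem} gives an error bound of $O(|P|^{1/2} (\log_q |P|))$. Since $\log_q |P| \ll |P|^{\varepsilon}$ for any fixed $\varepsilon > 0$ as $g \to \infty$, this is absorbed into $O(|P|^{1/2 + \varepsilon})$, which is exactly the error claimed in Theorem \ref{fmoment-2}. Collecting the three specializations yields the three parts of the theorem simultaneously.

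There is essentially no obstacle here: the entire content of Theorem \ref{fmoment-2} is already encoded in Theorem \ref{F-M-Theorem}, and the only ``work'' is to verify that $s=2$ sits in the easy regime of each piecewise coefficient and to weaken the logarithmic factor to $|P|^{\varepsilon}$. All genuine difficulty—handling the approximate functional equation for $L(s,\chi_P)$, summing over primes $P$ via Proposition \ref{bound}, and controlling the contribution from polynomials that are perfect squares—was already carried out in the proof of Theorem \ref{F-M-Theorem}, so no new ideas are required at $s = 2$.
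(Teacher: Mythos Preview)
Your proposal is correct and matches the paper's own approach: the paper explicitly presents Theorem \ref{fmoment-2} ``as an application of Theorem \ref{F-M-Theorem}'' with no separate proof, so the only work is exactly the specialization at $s=2$ (in the ${\rm Re}(s)\ge \tfrac{3}{2}$ branch for $J_g$, $K_g$ and the ${\rm Re}(s)\ge 1$ branch for $I_g$) together with the trivial absorption of $\log_q|P|$ into $|P|^{\varepsilon}$ that you describe.
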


Putting together Theorem \ref{fmoment-2} and \eqref{second-k-group}, we obtain the following corollary.

\begin{cor}
Let $q$ be a fixed power of an odd prime.
For every $\varepsilon>0$, we have
\begin{enumerate}
\item
\begin{align}
\sum_{P\in\mb P_{2g+1}} \#K_{2}(\OO_{P}) = q^{-\frac{3}2} \zeta_{\A}(4) \frac{|P|^{\frac{5}2}}{\log_{q}|P|}
+ O\left(|P|^{2+\varepsilon}\right),
\end{align}
\item
\begin{align}
\sum_{P\in\mb P_{2g+2}} \#K_{2}(\OO_{P}) = q^{-2} \frac{\zeta_{\A}(3) \zeta_{\A}(4)}{\zeta_{\A}(2)}
\frac{|P|^{\frac{5}2}}{\log_{q}|P|} + O\left(|P|^{2+\varepsilon}\right),
\end{align}
\item
\begin{align}
\sum_{P\in\mb P_{2g+2}} \#K_{2}(\OO_{\gamma P}) = q^{-2} \frac{\zeta_{\A}(2) \zeta_{\A}(4) \zeta_{\A}(5)}{\zeta_{\A}(3)^2} \frac{|P|^{\frac{5}2}}{\log_{q}|P|}
+ O\left(|P|^{2+\varepsilon}\right),
\end{align}
\end{enumerate}
as $g\to\infty$. 
\end{cor}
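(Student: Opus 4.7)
The proof will consist of a mechanical combination of Theorem \ref{fmoment-2} with the formula \eqref{second-k-group}, which expresses $\#K_{2}(\OO_{D})$ as a scalar multiple of $|D|^{\frac{3}{2}} L(2,\chi_{D})$ with a constant depending only on whether $D$ has odd degree, even degree with $sgn(D)=1$, or even degree with $sgn(D)=\gamma$. Once the inversion is made, summing over the appropriate prime family reduces directly to one of the three mean values in Theorem \ref{fmoment-2}.

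Concretely, I would treat the three cases in turn. For (1), solving \eqref{second-k-group} in the odd-degree case yields $\#K_{2}(\OO_{P}) = q^{-\frac{3}{2}} |P|^{\frac{3}{2}} L(2,\chi_{P})$, and multiplying Theorem \ref{fmoment-2} (1) by $q^{-\frac{3}{2}} |P|^{\frac{3}{2}}$ immediately produces the claimed main term $q^{-\frac{3}{2}} \zeta_{\A}(4) |P|^{\frac{5}{2}} (\log_{q}|P|)^{-1}$ together with the advertised error. For (2), the conversion factor is $\tfrac{\zeta_{\A}(3)}{\zeta_{\A}(2)} q^{-2}$, and pairing this with Theorem \ref{fmoment-2} (2) produces the constant $q^{-2} \zeta_{\A}(3)\zeta_{\A}(4)/\zeta_{\A}(2)$. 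For (3), I would first observe $sgn(\gamma P)=\gamma$ and $|\gamma P|=|P|$, then rewrite \eqref{second-k-group} as $\#K_{2}(\OO_{\gamma P}) = \tfrac{\zeta_{\A}(2)\zeta_{\A}(5)}{\zeta_{\A}(3)^{2}} q^{-2} |P|^{\frac{3}{2}} L(2,\chi_{\gamma P})$, whereupon Theorem \ref{fmoment-2} (3) yields the third asymptotic.

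The only bookkeeping step requiring care is tracking how the prefactor $|P|^{\frac{3}{2}}$ interacts with the error: the term $O(|P|^{\frac{1}{2}+\varepsilon})$ in Theorem \ref{fmoment-2} scales to $O(|P|^{2+\varepsilon})$, matching the statement exactly, while the constants $q^{-3/2}$, $q^{-2}$ and the $\zeta_{\A}$-factors are uniform in $g$ and cause no trouble. There is no genuine obstacle at this stage; the substantive content lies upstream in Theorem \ref{fmoment-2} (and ultimately in the evaluation of $\sum_{P} L(s,\chi_{P})$ via an approximate functional equation), and the present corollary is simply the \emph{arithmetic translation} of those $L$-values into $K$-group cardinalities via Rosen's identity.
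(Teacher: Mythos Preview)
Your proposal is correct and matches the paper's approach exactly: the paper simply states that the corollary is obtained by ``putting together Theorem \ref{fmoment-2} and \eqref{second-k-group}'', which is precisely the mechanical combination you describe. Your inversion of \eqref{second-k-group} in each of the three cases and your tracking of the error term under multiplication by $|P|^{3/2}$ are both accurate, so there is nothing to add.
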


\subsection{Even characteristic case}
We now will handle the more difficult case. In this section we assume that $q$ is a power of $2$.

\subsubsection{\bf Quadratic function fields of even characteristic}
The theory of quadratic function fields of even characteristic was first developed in \cite{CY08}
and we sketch below the basics on function fields of characteristic even.
Every separable quadratic extension $K$ of $k$ is of the form $K = K_{u} := k(x_{u})$,
where $x_{u}$ is a zero of $X^2+X+u=0$ for $u\in k$.
Let $\wp : k \to k$ be the additive homomorphism defined by $\wp(x) = x^2 + x$. 
Two extensions $K_{u}$ and $K_{v}$ are equal if and only if
$x_{v} = \alpha x_{u} + w, v = \alpha u + \wp(w)$ for some $\alpha\in\bF_{q}^{\times}$ and $w\in k$.
Hence, we can normalize $u$ to satisfy the following conditions (see \cite{HL10}):
\begin{align}\label{normalization-1}
&\qquad\qquad u = \sum_{i=1}^{m} \frac{Q_{i}}{P_{i}^{e_i}} + f(T), \\
& (P_{i}, Q_{i}) = 1 \text{ and } 2\nmid e_{i} \text{ for } 1 \le i \le m,  \\
&\qquad 2 \nmid \deg(f(T)) \text{ if } f(T) \in \bF_{q}[T]\setminus \bF_{q},
\end{align}
where $P_{i}\in\mb P$ are distinct and $Q_{i} \in \A$ with $\deg(Q_{i}) < \deg(P_{i}^{e_i})$ for $1 \le i \le m$.
In this case, the infinite prime $\infty = (1/T)$ is split, inert or ramified in $K_{u}$ according as
$f(T) = 0$, $f(T) \in \bF_{q}\setminus\wp(\bF_{q})$ or $f(T) \not\in\bF_{q}$.
Then the field $K_{u}$ is called \emph{real}, \emph{inert imaginary} or \emph{ramified imaginary}, respectively.
Let $\OO_{u}$ be the integral closure of $\A$ in $K_{u}$.
Then $\OO_{u} = \A + G_{u} x_{u}\A$, where $G_{u} := \prod_{i=1}^{m} P_{i}^{(e_{i}+1)/2}$,
and the discriminant of $\OO_{u}$ is $G_{u}^2 = \prod_{i=1}^{m} P_{i}^{e_{i}+1}$.
The local discriminant of $K_{u}$ at the infinite prime $\infty$ is $\infty^{\deg(f(T)) + 1}$
if $K_{u}$ is ramified imaginary and trivial otherwise.
Hence, the discriminant $D_{u}$ of $K_u$ is given by
\begin{align}
D_{u} = \begin{cases}
G_{u}^2 \cdot \infty^{\deg(f(T)) + 1} & \text{ if $K_{u}$ is ramified imaginary,} \\
G_{u}^2 & \text{ otherwise,}
\end{cases}
\end{align}
and, by the Hurwitz genus formula, the genus $g_{u}$ of $K_{u}$ is given by
\begin{align}
g_{u} = \frac{1}2 \deg(D_{u}) - 1.
\end{align}
In general, the normalization \eqref{normalization-1} is not unique.
Fix an element $\xi \in \bF_{q}\setminus \wp(\bF_{q})$.
Then every $u$ can be normalized uniquely to satisfy the following conditions:
\begin{align}\label{normalization-2}
u = \sum_{i=1}^{m} \sum_{j=0}^{\ell_{i}} \frac{Q_{ij}}{P_{i}^{2j+1}} + \sum_{i=1}^{n} \alpha_{i} T^{2i-1} + \alpha,
\end{align}
where $P_{i}\in\mb P$ are distinct, $Q_{ij}\in\A$ with $\deg(Q_{ij}) < \deg(P_{i})$, $Q_{i \ell_{i}} \ne 0$,
$\alpha \in \{0, \xi\}$, and $\alpha_{n} \ne 0$ for $n>0$.
Let $\widetilde{\mc F}$ be the set of such $u$'s above with $n=0$ and $\alpha=0$,
and $\widetilde{\mc F}'$ be the set of such $u$'s above with $n=0$ and $\alpha=\xi$.
Then, we see that $u\mapsto K_{u}$ defines an one-to-one correspondence between $\widetilde{\mc F}$ (resp. $\widetilde{\mc F}'$)
and the set of real (resp. inert imaginary) separable quadratic extensions of $k$.
Similarly, if we denote by $\widetilde{\mc H}$ the set of such $u$'s above with $n\ne 0$, then $u\mapsto K_{u}$ defines an one-to-one
correspondence between $\widetilde{\mc H}$ and the set of ramified imaginary separable quadratic extensions of $k$.

\subsubsection{\bf Hasse symbol and $L$-functions}
Let $P\in\mb P$.
For $u\in k$ which is $P$-integral, the Hasse symbol $[u, P)$ with values in $\bF_{2}$ is defined by
\begin{align}
[u, P) := \begin{cases}
0 & \text{ if $X^2+X\equiv u \bmod P$ is solvable in $\A$,} \\ 1 & \text{ otherwise.}
\end{cases}
\end{align}
For $N\in\A$ prime to the denominator of $u$, write $N = sgn(N) \prod_{i=1}^{s} P_{i}^{e_i}$,
where $P_{i} \in \mb P$ are distinct and $e_{i} \ge 1$, and define $[u, N)$ to be $\sum_{i=1}^{s} e_{i} [u, P_{i})$.

For $u\in k$ and $0\ne N \in\A$, we also define the quadratic symbol:
\begin{align}
\left\{\frac{u}{N}\right\} := \begin{cases}
(-1)^{[u, N)} & \text{ if $N$ is prime to the denominator of $u$,} \\ 0 & \text{ otherwise.}
\end{cases}
\end{align}
This symbol is clearly additive in its first variable, and multiplicative in the second variable.

For the quadratic extension $K_{u}$ of $k$, we associate a character $\chi_{u}$ on $\A^{+}$
which is defined by $\chi_{u}(f) = \{\frac{u}{f}\}$.
Let $L(s,\chi_{u})$ be the $L$-function  associated to the character $\chi_{u}$:
for $s\in\mb C$ with ${\rm Re}(s)\ge 1$,
\begin{align}
L(s,\chi_{u}) = \sum_{f\in\A^{+}} \chi_{u}(f) |f|^{-s}
= \prod_{P\in\mb P} \left(1-\chi_{u}(P)|P|^{-s}\right)^{-1}.
\end{align}
It is well known that $L(s,\chi_{u})$ is a polynomial in $q^{-s}$.
Letting $z=q^{-s}$, write $\mc L(z,\chi_{u}) = L(s, \chi_{u})$.
Then, $\mc L(z,\chi_{u})$ is a polynomial in $z$ of degree $2 g_{u} + \frac{1+(-1)^{\varepsilon(u)}}{2}$,
where $\varepsilon(u) = 1$ if $K_{u}$ is ramified imaginary and $\varepsilon(u) = 0$ otherwise.
Also we have that $\mathcal{L}(z,\chi_{u})$ has a ``trivial" zero at $z=1$ (resp. $z=-1$)
if and only if $K_{u}$ is real (resp. inert imaginary), so we can define the ``completed" $L$--function as
\begin{equation}\label{E-completed-L}
\mc{L}^{*}(z,\chi_{u}) = \begin{cases}
\mc{L}(z,\chi_{u}) & \text{ if $K_{u}$ is ramified imaginary,} \\
(1-z)^{-1} \mc{L}(z,\chi_{u}) & \text{ if $K_{u}$ is real,} \\
(1+z)^{-1} \mc{L}(z,\chi_{u}) & \text{ if $K_{u}$ is inert imaginary,}
\end{cases}
\end{equation}
which is a polynomial of even degree $2 g_{u}$ satisfying the functional equation
\begin{equation}\label{E-functional equation}
\mathcal{L}^{*}(z,\chi_{u})=(q z^{2})^{g_{u}}\mathcal{L}^{*}\left(\frac{1}{qz},\chi_{u}\right).
\end{equation}

\subsubsection{\bf Main results}
We are interested in the family of real, inert imaginary or ramified imaginary quadratic extensions $K_{u}$
of $k$ whose finite discriminant is a square of prime polynomial, i.e., $G_{u}\in\mb P$.
For any two subsets $U, V$ of $k$ and $w\in k$, we write
\begin{align}
U+V := \{u+v : u\in U \text{ and } v\in V\} \quad\text{ and }\quad U+w := \{u+w : u \in U\}.
\end{align}
Let $\mc F$ be the set of rational functions $u\in\widetilde{\mc F}$ whose denominator is a monic irreducible polynomial,
i.e., $u = \frac{A}{P}\in\widetilde{\mc F}$ with $P\in\mb P$ and $0 \ne A\in\A, \deg(A)<\deg(P)$, and $\mc F' = \mc F + \xi$.
Then, under the above correspondence $u\mapsto K_{u}$, $\mc F$ (resp. $\mc F'$) corresponds to
the set of real (resp. inert imaginary) separable quadratic extensions of $k$ whose discriminant is a square of prime polynomial.
For each positive integer $n$, let $\mc F_{n}$ be the set of rational functions $u = \frac{A}{P} \in \mc F$
such that $P \in \mb P_{n}$ and $\mc F'_{n} = \mc F_{n} + \xi$.
Then, under the correspondence $u\mapsto K_{u}$, $\mc F_{g+1}$ (resp. $\mc F'_{g+1}$) corresponds to the set of real (resp. inert imaginary)
separable quadratic extensions of $k$ whose discriminant is a square of prime polynomial and genus is $g$.

For each positive integer $s$, let $\mc G_{s}$ be the set of polynomials $F(T) \in \A_{2s-1}$ of the form
\begin{align}
F(T) = \alpha+\sum_{i=1}^{s}\alpha_{i}T^{2i-1}, \quad \alpha\in\{0, \xi\}, ~~~\alpha_{s} \ne 0.
\end{align}
Let $\mc G$ be the union of $\mc G_{s}$'s for $s \ge 1$ and $\mc H = \mc F + \mc G$.
Then, under the correspondence $u\mapsto K_{u}$, $\mc H$ corresponds to the set of ramified imaginary separable quadratic extensions of $k$
whose finite discriminant is a square of prime polynomial.
For integers $r,s\ge 1$, let $\mc H_{(r,s)} = \mc F_{r}+\mc G_{s}$.
Then, for each $u\in \mc H_{(r,s)}$, the corresponding field $K_{u}$ is a ramified imaginary imaginary of genus $r+s-1$.
For integer $n\ge 1$, let $\mc H_{n}$ be the union of $\mc H_{(r,n-r)}$'s for $1 \le r \le n-1$.
Then, under the correspondence $u\mapsto K_{u}$, $\mc H_{g+1}$ corresponds to the set of ramified imaginary separable quadratic extensions of $k$
whose finite discriminant is a square of prime polynomial and genus is $g$.

In this paper, we are interested in asymptotics for the sums (as $q$ is fixed and $g\to\infty$):
\begin{align}
\sum_{u\in\mc H_{g+1}} L(s,\chi_{u}), \hspace{0.3em}  \sum_{u\in\mc F_{g+1}} L(s,\chi_{u}), \hspace{0.3em}
\sum_{u\in\mc F'_{g+1}} L(s,\chi_{u}), \hspace{0.3em}\mathrm{Re}(s)\ge\frac{1}2
\end{align}
and
\begin{align}
\sum_{u\in\mc H_{g+1}} L(\tfrac{1}2,\chi_{u})^2, \hspace{0.3em} \sum_{u\in\mc F_{g+1}} L(\tfrac{1}2,\chi_{u})^2,
\hspace{0.3em} \sum_{u\in\mc F'_{g+1}} L(\tfrac{1}2,\chi_{u})^2.
\end{align}
For each $P\in\mb P$, let $\mc F_{P}$ be the set of rational functions $u \in \mc F$ whose denominator is $P$,
and $\mc F'_{P} := \mc F_{P} + \xi$.
Then $\mc F_{g+1}$ is disjoint union of the $\mc F_{P}$'s and $\mc F'_{g+1}$ is disjoint union of the $\mc F'_{P}$'s,
where $P$ runs over prime polynomials in $\mb P_{g+1}$.
Hence, we can write
\begin{align}
\sum_{u\in\mc F_{g+1}} L(s,\chi_{u}) = \sum_{P\in\mb P_{g+1}} \sum_{u\in\mc F_{P}} L(s,\chi_{u}), \quad
\sum_{u\in\mc F'_{g+1}} L(s,\chi_{u}) = \sum_{P\in\mb P_{g+1}} \sum_{u\in\mc F'_{P}} L(s,\chi_{u}),
\end{align}
and
\begin{align}
\sum_{u\in\mc F_{g+1}} L(\tfrac{1}2,\chi_{u})^2 = \sum_{P\in\mb P_{g+1}} \sum_{u\in\mc F_{P}}  L(\tfrac{1}2,\chi_{u})^2, \quad
\sum_{u\in\mc F'_{g+1}} L(\tfrac{1}2,\chi_{u})^{2} = \sum_{P\in\mb P_{g+1}} \sum_{u\in\mc F'_{P}} L(\tfrac{1}2,\chi_{u})^2.
\end{align}

For the first moment of such $L$--functions, we have the following theorem.

\begin{thm}\label{E-F-M-Theorem}
Let $\mathbb{F}_{q}$ be a fixed finite field with $q$ being a power of $2$.
\begin{enumerate}
\item
For $s \in \CC$ with ${\rm Re}(s)\ge \frac12$, we have
\begin{align}
\sum_{u\in\mc H_{g+1}} L(s,\chi_{u}) = 2\tilde{I}_{g}(s) \frac{q^{2g+1}}{g}
+ \begin{cases}
O(g^{-1} q^{2g}) & \text{ if $s=\frac{1}2$,} \vspace{0.2em}\\
O(g^{-2} q^{2g}) & \text{ if $s\ne\frac{1}2$,}
\end{cases}
\end{align}
as $g\rightarrow\infty$ with
\begin{align}
\tilde{I}_{g}(s) = \begin{cases}
g+1 & \text{ if $s=\frac{1}2$,} \vspace{0.2em}\\
\zeta_{\A}(2s) & \text{ if $s\ne\frac{1}2$.}
\end{cases}
\end{align}

\item
For any $\varepsilon>0$ and for $s \in \CC$ with ${\rm Re}(s)\ge \frac12$ and $|s-1|>\varepsilon$, we have
\begin{align}
\sum_{P\in\mb P_{g+1}}\sum_{u\in\mc F_{P}} L(s,\chi_{u}) = \tilde{J}_{g}(s) \frac{|P|^{2}}{\log_{q}|P|} + O\left(|P|^{\frac{3}{2}}\right)
\end{align}
as $g\rightarrow\infty$ with
\begin{align}
\tilde{J}_{g}(s) := \begin{cases}
g+1+\zeta_{\A}(\tfrac{1}2) & \text{ if $s=\frac{1}2$,} \\
\zeta_{\A}(2s) {J}'_{g}(s) - \zeta_{\A}(2) {J}^{*}_{g}(s) & \text{ if $\frac{1}2 \le {\rm Re}(s) < 1 ~(s\ne \frac1{2})$,}  \\
\zeta_{\A}(2s) & \text{ if ${\rm Re}(s) \ge 1$,}
\end{cases}
\end{align}
where ${J}'_{g}(s)$ and ${J}^{*}_{g}(s)$ are given in Theorem \ref{F-M-Theorem} (2) and, for $s=1$, we have
\begin{align}
\sum_{P\in\mb P_{g+1}} \sum_{u\in\mc F_{P}} L(1,\chi_{u}) = \zeta_{\A}(2) \frac{|P|^{2}}{\log_{q}|P|} + O\left(|P|^{\frac{3}{2}}\right).
\end{align}
\item
For $s \in \CC$ with ${\rm Re}(s)\ge \frac12$, we have
\begin{align}
\sum_{P\in\mb P_{g+1}}\sum_{u\in\mc F'_{P}} L(s,\chi_{u}) = \tilde{K}_{g}(s) \frac{|P|^{2}}{\log_{q}|P|} + O\left(|P|^{\frac{3}{2}}\right),
\end{align}
as $g\rightarrow\infty$ with
\begin{align}
\tilde{K}_{g}(s) := \begin{cases}
g+1+\zeta_{\A}(0)\zeta_{\A}(\frac1{2})^{-1} & \text{ if $s=\frac{1}2$, } \\
\zeta_{\A}(2s) {K}'_{g}(s) - \zeta_{\A}(2) {K}^{*}_{g}(s) & \text{ if $\frac{1}2 \le {\rm Re}(s) < 1 ~(s\ne \frac1{2})$,}  \\
\zeta_{\A}(2s) & \text{ if ${\rm Re}(s) \ge 1$,}
\end{cases}
\end{align}
where ${K}'_{g}(s)$ and ${K}^{*}_{g}(s)$ are given in Theorem \ref{F-M-Theorem} (3).
\end{enumerate}
\end{thm}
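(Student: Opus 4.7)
The proof strategy I would adopt mirrors the odd-characteristic arguments used earlier in this paper, adapted to the additive Hasse-symbol setting. The starting point is the approximate functional equation for $\mc L^{*}(z,\chi_{u})$, which follows from the fact that it is a polynomial of degree $2g_{u}$ in $z = q^{-s}$ satisfying \eqref{E-functional equation}. This yields an expression for $L(s,\chi_{u})$ as essentially $\sum_{\deg f \le g_{u}}\chi_{u}(f)|f|^{-s}$ together with its functional-equation dual, further adjusted by the factor $(1\mp z)^{-1}$ in the real and inert-imaginary cases coming from \eqref{E-completed-L}. This $(1 \mp z)^{-1}$ correction is precisely what produces the $\zeta_{\A}(\tfrac12)$ and $\zeta_{\A}(0)\zeta_{\A}(\tfrac12)^{-1}$ discrepancies between $\tilde{I}_{g}(\tfrac12)$ and $\tilde{J}_{g}(\tfrac12), \tilde{K}_{g}(\tfrac12)$ at the central point.

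Next I would switch the order of summation so the outer sum runs over $f\in\A^{+}$ of bounded degree and the inner sum over $u$ in the relevant family ($\mc F_{P}$, $\mc F'_{P}$, or $\mc H_{(r,s)}$). Following the standard dichotomy, I would split according to whether $f$ is a perfect square in $\A$. For squares $f=h^{2}$ with $h$ prime to the relevant discriminant, the inner character sum is trivial and collapses to a count of the family: $\#\mc F_{P} = |P|-1$ in the real case, $\#\mc F'_{P} = |P|-1$ in the inert case, and a comparable count on each piece $\mc H_{(r,s)}$ in the ramified-imaginary case. Combining this count with the prime polynomial theorem (Theorem \ref{thm:pnt}) and summing the resulting geometric series over square $f$ produces the main terms $\zeta_{\A}(2s)\cdot|P|^{2}/\log_{q}|P|$ and $\zeta_{\A}(2s)\cdot 2 q^{2g+1}/g$, while the boundary of the truncated sum, together with the dual piece from the functional equation, supplies the explicit $J'_{g}(s), J^{*}_{g}(s), K'_{g}(s), K^{*}_{g}(s)$ corrections already recorded in Theorem \ref{F-M-Theorem}.

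The principal technical obstacle is the non-square contribution, where I would need square-root cancellation in sums of the form $\sum_{A \bmod P,\, A\ne 0}(-1)^{[A/P,\,f)}$ and their analogues indexed by $u \in \mc H_{(r,s)}$. Since the Hasse pairing is additive in its first slot, $A\mapsto[A/P,f)$ is an $\bF_{2}$-valued additive character on $\A/P\A$, and I would bound the sum via Weil's theorem applied to the Artin--Schreier cover of $\PP^{1}$ defined by $X^{2}+X=(A/P)\,h$, where $h$ encodes the square-free part of $f$; this should yield a saving of $|P|^{1/2}$, the even-characteristic analogue of Proposition \ref{bound}. Once such a cancellation estimate is in hand, the remainder is routine bookkeeping: sum over $P \in \mb P_{g+1}$ using Theorem \ref{thm:pnt}, reassemble the main terms and boundary corrections to read off $\tilde{I}_{g}(s)$, $\tilde{J}_{g}(s)$, $\tilde{K}_{g}(s)$, and absorb the non-square contribution into the claimed error terms $O(|P|^{3/2})$ in (2) and (3) and $O(g^{-1}q^{2g})$ or $O(g^{-2}q^{2g})$ in (1).
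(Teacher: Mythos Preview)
Your overall architecture is exactly that of the paper: start from the approximate functional equation (Lemma \ref{A-FE}), swap the order of summation, split into squares (giving the main term via the count $\#\mc F_{P}=|P|-1$ together with the prime polynomial theorem) and non-squares (error). The identification of the $(1\mp z)^{-1}$ factor as the source of the $\zeta_{\A}(\tfrac12)$ and $\zeta_{\A}(0)\zeta_{\A}(\tfrac12)^{-1}$ shifts is also correct.

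There is, however, a real gap in your treatment of the non-square contribution, and it matters for the stated error terms. You correctly observe that $A\mapsto[A/P,f)$ is an additive character on $\A/P\A$, but then you propose to bound the resulting sum via Weil's theorem for an Artin--Schreier cover, expecting a saving of $|P|^{1/2}$. This is both unnecessary and too weak. The paper instead uses the elementary consequence of the very additivity you pointed out: a nontrivial additive character summed over a finite abelian group vanishes \emph{exactly}. Concretely, when $f$ is not a square and $P\nmid f$ one has $\Gamma_{f,P}=\sum_{A\bmod P}\{\tfrac{A/P}{f}\}=0$, hence $T_{f,P}=\sum_{A\ne 0}\{\tfrac{A/P}{f}\}=-1$ (Lemma \ref{E-BOUND-2-1}); the ramified-imaginary variant $\Gamma_{f,P,s}=0$ is obtained the same way (Lemma \ref{E-BOUND-3}). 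This exact evaluation gives $\sum_{P\in\mb P_{g+1}}\sum_{u\in\mc F_{P}}\chi_{u}(f)\ll|P|/\log_{q}|P|$, and feeding that into the non-square sum yields the claimed $O(|P|^{3/2})$.

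By contrast, a Weil-type bound $T_{f,P}=O(|P|^{1/2})$ would only give
\[
\sum_{n\le g}q^{-ns}\sum_{\substack{f\in\A_{n}^{+}\\ f\ne\square}}\Bigl|\sum_{P}\sum_{u\in\mc F_{P}}\chi_{u}(f)\Bigr|
\ \ll\ \frac{|P|^{3/2}}{\log_{q}|P|}\sum_{n\le g}q^{(1-s)n},
\]
which at $s=\tfrac12$ is $\asymp |P|^{2}/\log_{q}|P|$, i.e.\ the same order as the main term and far larger than the asserted $O(|P|^{3/2})$; the analogous loss occurs in part (1). So your Weil route, as stated, does not prove the theorem. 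The fix is simply to drop Weil and use the exact vanishing that additivity already gives you.
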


\begin{rem}
If $q$ is odd, the quadratic extension $k(\sqrt{\gamma D})$ of $k$ is also ramified imaginary
for any monic square-free polynomial $D$ of odd degree.
Under the changing of variable $T\mapsto \gamma T$, $k(\sqrt{\gamma D})$ becomes to $k(\sqrt{D})$.
Hence, we only consider the family $\{k(\sqrt{P}) : P\in\mb P_{2g+1}\}$ in Theorem \ref{F-M-Theorem} (1). However, if $q$ is even, we consider all separable ramified imaginary one.
This is the reason why the constant ``2" appears in Theorem \ref{E-F-M-Theorem} (1) and does not appear in Theorem \ref{F-M-Theorem} (1).
\end{rem}

For the second moment of $L$--functions at $s=\frac{1}2$, we have the following theorem.

\begin{thm}\label{E-S-M-Theorem}
Let $q$ be a fixed power of $2$.
As $g\to\infty$, we have
\begin{enumerate}
\item
\begin{align}
\sum_{u\in\mc H_{g+1}} L(\tfrac{1}2,\chi_{u})^{2} = \frac{2}{3 \zeta_{\A}(2)} g^2 q^{2g+1} + O\left((\log g) g q^{2g}\right),
\end{align}
\item
\begin{align}
\sum_{P\in\mb P_{g+1}} \sum_{u\in\mc F_{P}} L(\tfrac{1}2,\chi_{u})^2 = \frac{1}{3 \zeta_{\A}(2)} |P|^2 (\log_{q}|P|)^2 + O\left(|P|^2 (\log_{q}|P|)\right),
\end{align}
\item
\begin{align}
\sum_{P\in\mb P_{g+1}} \sum_{u\in\mc F'_{P}} L(\tfrac{1}2,\chi_{u})^{2} = \frac{1}{3 \zeta_{\A}(2)} |P|^2 (\log_{q}|P|)^2 + O\left(|P|^2 (\log_{q}|P|)\right).
\end{align}
\end{enumerate}
\end{thm}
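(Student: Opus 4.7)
The plan is to adapt the second-moment strategy of Andrade--Keating \cite{AK13} (underlying the odd-characteristic Theorem \ref{S-M-Theorem}) to the Hasse-symbol setting, handling all three cases in parallel. Since the completed $L$-function $\mc L^*(z, \chi_u)$ is a polynomial of even degree $2g_u$ obeying the functional equation \eqref{E-functional equation}, expanding $\mc L^*(z,\chi_u)^2 = \sum_{m=0}^{4g_u} C_m(u)\, z^m$ and pairing coefficients $m \leftrightarrow 4g_u - m$ yields the approximate functional equation
\begin{align*}
\mc L^*(q^{-1/2},\chi_u)^2 = 2 \sum_{m=0}^{2g_u - 1} C_m(u)\, q^{-m/2} + C_{2g_u}(u)\, q^{-g_u},
\end{align*}
where, via \eqref{E-completed-L}, the coefficients $C_m(u)$ are short linear combinations of the divisor sums $b_n(u) := \sum_{h\in\A^+_n} d(h)\, \chi_u(h)$. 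The value $L(\tfrac12,\chi_u)^2$ is then recovered by multiplying $\mc L^*(q^{-1/2}, \chi_u)^2$ by $1$, $(1-q^{-1/2})^2$, or $(1+q^{-1/2})^2$ according to whether $K_u$ is ramified imaginary, real, or inert imaginary.

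After exchanging orders of summation the problem splits into a diagonal contribution from squares $h = f^2$ and an off-diagonal contribution from non-squares. For the diagonal, $\chi_u(f^2)\in\{0,1\}$ equals $1$ precisely when $f$ is coprime to the denominator $G_u$ of $u$: since $G_u$ is a prime of degree $g+1$ in cases (2) and (3) and a prime of some degree $r\le g$ in case (1), the coprimality condition is automatic throughout the main range. Using the generating-function identity $\sum_{f\in\A^+} d(f^2)|f|^{-s} = \zeta_{\A}(s)^3/\zeta_{\A}(2s)$ and extracting the triple pole at $s=1$, the diagonal evaluates (after incorporating the relevant completion factor at $z=q^{-1/2}$) to
\begin{align*}
|\text{family}|\cdot \tfrac{1}{3\zeta_{\A}(2)}\, g^3 + O\!\left(|\text{family}| \cdot g^2\right).
\end{align*}
Substituting the family sizes $|\mc H_{g+1}|\sim 2q^{2g+1}/g$ (producing the constant $\tfrac{2}{3\zeta_{\A}(2)}$ in case (1)) and $|\mc F_{g+1}|=|\mc F'_{g+1}|\sim q^{2g+2}/g$ (producing $\tfrac{1}{3\zeta_{\A}(2)}$ in cases (2) and (3)) yields the claimed leading terms.

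The off-diagonal demands a square-root cancellation bound for $\sum_u \chi_u(h)$ when $h$ is not a perfect square. In cases (2) and (3) the inner sum over $u\in\mc F_P$ (resp.\ $\mc F'_P$) for fixed $P\in\mb P_{g+1}$ is, by the $\bF_2$-additivity of the Hasse symbol $[A/P, h)$ in $A$, an additive $\bF_2$-valued character sum on $\A/P\A$ that vanishes unless an induced linear form is trivial, after which the residual outer sum over $P\in\mb P_{g+1}$ is controlled by Proposition \ref{bound}. In case (1) an analogous decomposition over $(u_0, F) \in \mc F \times \mc G$ applies, with the $F\in\mc G$ direction contributing an elementary additive-character cancellation over $\bF_q$-coefficients. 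Combined with a dyadic handling of the divisor weights $d(h)$ and a direct verification that the boundary coefficient $C_{2g_u}(u) q^{-g_u}$ is absorbed by the claimed error, this completes the three estimates. The principal obstacle is exactly this cancellation bound for the Hasse-symbol character sum: because the Hasse symbol takes values only in $\bF_2$, multiplicative character orthogonality is unavailable, so one must exploit the Artin--Schreier structure of the family of $K_u$'s together with the Riemann Hypothesis for curves in the form of Proposition \ref{bound}.
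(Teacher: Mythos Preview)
Your strategy matches the paper's: the approximate functional equation from Lemma~\ref{A-FES}, the square/non-square split, and the additive cancellation of the Hasse symbol over the $A$-variable are exactly what drive the proof. Two points in your sketch are off, though.

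\textbf{Off-diagonal, cases (2) and (3).} Proposition~\ref{bound} has no role here---it is a Weil bound for the Kronecker symbol $(\frac{f}{P})$ in odd characteristic, and there is no analogous object in the even-characteristic sum you are estimating. What actually happens (this is Lemma~\ref{E-BOUND-2-1}, resting on \cite[Lemma~3.1]{Ch08}) is stronger and more elementary than you suggest: for every non-square $f$ with $\deg f \le 2g+1$ and every $P\in\mb P_{g+1}$ with $P\nmid f$, the full inner sum $\Gamma_{f,P}=\sum_{A\bmod P}\{\tfrac{A/P}{f}\}$ vanishes, so $T_{f,P}=\Gamma_{f,P}-1=-1$. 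There is no ``residual outer sum'' requiring cancellation over $P$; one simply gets $\sum_{P}\sum_{u\in\mc F_P}\chi_u(f)=-\#\{P\in\mb P_{g+1}:P\nmid f\}\ll |P|/\log_q|P|$. The analogous bound for case~(1) is Lemma~\ref{E-BOUND-3-1}, and the $\log g$ in the final error term comes from summing $T_{f,P,g+1-r}\ll q^{g+1-r}$ times $\#\mb P_r\le q^r/r$ over $r=1,\dots,g$.

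\textbf{Diagonal, case (1).} Your claim that ``the coprimality condition is automatic throughout the main range'' is false here: for $u\in\mc H_{(r,g+1-r)}$ the prime $P$ has degree $r$, which ranges over $1\le r\le g$, so $P\mid L$ occurs for many $L\in\A^+_l$ once $l\ge r$. The paper does not sidestep this; it computes $\rho_l^*(r)=\sum_{L\in\A_l^+,\,P\nmid L}d(L^2)$ explicitly via the generating-function inclusion--exclusion of Lemma~\ref{E-D-SQUARE-2}, and then Proposition~\ref{E-C-S-S-M-000} shows that the resulting correction terms contribute only $O(gq^{2g})$ after summing over $r$. This is the main technical content of case~(1), and your sketch skips it.
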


\begin{rem}
There is a unique quadratic extension $k(\sqrt{P})$ of $k$ whose (finite) discriminant is $P$ in case of $q$ being odd,
but if $q$ is even, there are $\Phi(P) = |P|-1$ separable quadratic extensions $K_{u}$ of $k$ whose (finite) discriminant is $P^2$.
For this reason, $|P|^2$ appears in Theorem \ref{E-S-M-Theorem} whereas $|P|$ appears in Theorem \ref{S-M-Theorem}.
Considering this difference, we may regard Theorem \ref{E-S-M-Theorem} as an even characteristic analogue of Theorem \ref{S-M-Theorem}.
\end{rem}

As in odd characteristic case, we can consider $\mc H_{g+1}$, $\mc F_{g+1}$ and $\mc F'_{g+1}$ as probability spaces (ensembles)
with the uniform probability measure attached to them.
So the expected value of any function $F$ on $\mc H_{g+1}$, $\mc F_{g+1}$ or $\mc F'_{g+1}$ is defined as
\begin{align}
\left\langle F\right\rangle_{\mc H_{g+1}} &= \frac{1}{\#\mc H_{g+1}}\sum_{u\in\mc H_{g+1}}F(u), \\
\left\langle F\right\rangle_{\mc F_{g+1}} &= \frac{1}{\#\mc F_{g+1}}\sum_{u\in\mc F_{g+1}}F(u), \\
\left\langle F\right\rangle_{\mc F'_{g+1}} &= \frac{1}{\#\mc F'_{g+1}}\sum_{u\in\mc F'_{g+1}}F(u).
\end{align}
Since (see Lemma \ref{E-PNT})
\begin{align}
& \#\mc H_{g+1} = 2 \frac{q^{2g+1}}{g}+ O\left(\frac{q^{2g}}{g^2}\right), \\
& \#\mc F_{g+1} = \#\mc F'_{g+1} = \frac{q^{2g+2}}{g+1} + O\left(\frac{q^{\frac{3g}{2}}}{g}\right),
\end{align}
we have
\begin{align}
\frac{1}{\#\mc H_{g+1}} \sim \frac{g}{2 q^{2g+1}} ~~~\text{ and }~~~
\frac{1}{\#\mc F_{g+1}} = \frac{1}{\#\mc F'_{g+1}} \sim \frac{g+1}{q^{2g+2}} \quad \text{ as $g\to\infty$.}
\end{align}
From Theorems \ref{E-F-M-Theorem} and \ref{E-S-M-Theorem}, we get the following corollary.

\begin{cor}\label{Even-Ensemble}
With $q$ kept fixed power of $2$ and $g\rightarrow\infty$, we have
\begin{align}
\left\langle L(\tfrac{1}{2},\chi_{u})\right\rangle_{\mc H_{g+1}} &\sim g+1, \\
\left\langle L(\tfrac{1}{2},\chi_{u})\right\rangle_{\mc F_{g+1}} &\sim g+1+\zeta_{\A}(\tfrac{1}2),  \\
\left\langle L(\tfrac{1}{2},\chi_{u})\right\rangle_{\mc F'_{g+1}} &\sim g+1+\zeta_{\A}(0)\zeta_{\A}(\tfrac1{2})^{-1}
\end{align}
and
\begin{align}
\left\langle L(\tfrac{1}{2},\chi_{u})^2\right\rangle_{\mc H_{g+1}} &\sim \frac{g^3}{3 \zeta_{\A}(2)}, \\
\left\langle L(\tfrac{1}{2},\chi_{u})^2\right\rangle_{\mc F_{g+1}} &\sim  \frac{(g+1)^3}{3 \zeta_{\A}(2)},  \\
\left\langle L(\tfrac{1}{2},\chi_{u})^2\right\rangle_{\mc F'_{g+1}} &\sim \frac{(g+1)^3}{3 \zeta_{\A}(2)}.
\end{align}
\end{cor}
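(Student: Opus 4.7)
The plan is to deduce Corollary \ref{Even-Ensemble} directly from Theorems \ref{E-F-M-Theorem} and \ref{E-S-M-Theorem} by dividing each moment sum by the cardinality of the relevant ensemble, using the asymptotic counts $\#\mc H_{g+1} \sim 2q^{2g+1}/g$ and $\#\mc F_{g+1} = \#\mc F'_{g+1} \sim q^{2g+2}/(g+1)$ recorded just before the corollary. In each case the argument reduces to a quotient of two asymptotic expressions, so the main task is merely to verify that the error term in the numerator, once divided by the main term of the denominator, is of smaller order than the claimed main asymptotic.

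For the first-moment statements I will apply Theorem \ref{E-F-M-Theorem} at $s=\tfrac12$. On $\mc H_{g+1}$, part (1) gives $\sum_{u\in\mc H_{g+1}} L(\tfrac12,\chi_u) = 2(g+1) q^{2g+1}/g + O(q^{2g}/g)$; dividing by $\#\mc H_{g+1}$ yields the asymptotic $g+1$, and the error contributes $O(1)$, which is lower order. On $\mc F_{g+1}$ and $\mc F'_{g+1}$, parts (2) and (3) give main terms $\tilde J_g(\tfrac12) |P|^2/\log_q|P|$ and $\tilde K_g(\tfrac12) |P|^2/\log_q|P|$ with $|P|=q^{g+1}$, where $\tilde J_g(\tfrac12) = g+1+\zeta_{\A}(\tfrac12)$ and $\tilde K_g(\tfrac12) = g+1+\zeta_{\A}(0)\zeta_{\A}(\tfrac12)^{-1}$; dividing by $\#\mc F_{g+1} \sim q^{2g+2}/(g+1)$ produces the three stated first-moment asymptotics, and the error $O(|P|^{3/2})$ divided by $|P|^2/(g+1)$ contributes $O((g+1) q^{-(g+1)/2}) = o(g)$.

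For the second moments, the same division is carried out with Theorem \ref{E-S-M-Theorem}. On $\mc H_{g+1}$, the main term $\frac{2}{3\zeta_{\A}(2)} g^2 q^{2g+1}$ divided by $2q^{2g+1}/g$ gives $\frac{g^3}{3\zeta_{\A}(2)}$, while the error $O((\log g) g q^{2g})$ yields a contribution $O((\log g) g^2 / q) = o(g^3)$. Similarly, on $\mc F_{g+1}$ and $\mc F'_{g+1}$, the main term $\frac{1}{3\zeta_{\A}(2)} |P|^2 (\log_q|P|)^2$ divided by $|P|^2/(g+1)$ gives $\frac{(g+1)^3}{3\zeta_{\A}(2)}$, while the error $O(|P|^2 \log_q|P|)$ yields a contribution $O((g+1)^2) = o((g+1)^3)$.

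There is no real obstacle; the proof is a purely book-keeping deduction once the moment theorems and the cardinality formulas (from Lemma \ref{E-PNT}) are in hand. The only care required is to track the lower-order term in the denominator against the error term in the numerator, and in each of the six asymptotics this comparison is immediate.
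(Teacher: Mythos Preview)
Your proposal is correct and follows exactly the approach of the paper: the paper itself does not give a detailed proof of this corollary, but simply records the asymptotics for $\#\mc H_{g+1}$, $\#\mc F_{g+1}$, $\#\mc F'_{g+1}$ (from Lemma \ref{E-PNT}) and then states that the corollary follows from Theorems \ref{E-F-M-Theorem} and \ref{E-S-M-Theorem}. Your write-up is a faithful expansion of that one-line deduction, and the error-term checks you carry out are all correct.
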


We follow the same reasoning as it is done in Corollary \ref{Non-Zero} with Theorems \ref{E-F-M-Theorem} and \ref{E-S-M-Theorem} to get the following corollary.

\begin{cor}\label{E-Non-Zero}
With $q$ kept fixed power of $2$ and $g\rightarrow\infty$, we have
\begin{align}
& \sum_{\substack{u\in\mc H_{g+1}\\ L(\frac{1}2, \chi_{u})\ne0}} 1 \gg \frac{q^{2g+1}}{g^2}, \\
& \sum_{P\in\mb P_{g+1}} \sum_{\substack{u\in\mc F_{P}\\ L(\frac{1}2, \chi_{u})\ne0}} 1 \gg \left(\frac{|P|}{\log_{q}|P|}\right)^{2}, \\
& \sum_{P\in\mb P_{g+1}} \sum_{\substack{u\in\mc F'_{P}\\ L(\frac{1}2, \chi_{u})\ne0}} 1 \gg \left(\frac{|P|}{\log_{q}|P|}\right)^{2}.
\end{align}
\end{cor}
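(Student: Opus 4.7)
My plan is to mirror the Cauchy--Schwarz argument used in the proof of Corollary \ref{Non-Zero}, substituting the even-characteristic first and second moment asymptotics from Theorems \ref{E-F-M-Theorem} and \ref{E-S-M-Theorem}. Since $\chi_u$ is a real character, $L(\tfrac12,\chi_u)$ is real-valued, and Cauchy--Schwarz applied to the indicator of $\{L(\tfrac12,\chi_u)\ne 0\}$ paired against the value $L(\tfrac12,\chi_u)$ itself yields, for any finite family $S$,
\begin{equation*}
\left(\sum_{u\in S} L(\tfrac12,\chi_u)\right)^2 \leq \#\{u\in S : L(\tfrac12,\chi_u)\ne 0\} \cdot \sum_{u\in S} L(\tfrac12,\chi_u)^2.
\end{equation*}
Rearranging bounds the non-vanishing count from below by the squared first moment divided by the second moment.

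I would then feed in the asymptotics family-by-family. For $\mc H_{g+1}$, Theorem \ref{E-F-M-Theorem}(1) at $s=\tfrac12$ gives a first moment of size $\sim 2(g+1)q^{2g+1}/g \sim 2q^{2g+1}$, and Theorem \ref{E-S-M-Theorem}(1) gives a second moment of size $\sim \tfrac{2}{3\zeta_\A(2)}g^2 q^{2g+1}$, so the ratio is of order $q^{2g+1}/g^2$, as required. For $\mc F_{g+1}$ (respectively $\mc F'_{g+1}$), Theorem \ref{E-F-M-Theorem}(2) (respectively (3)) at $s=\tfrac12$ yields a first moment of size $\sim|P|^2$ (since $\tilde J_g(\tfrac12)\sim g+1$ cancels with $1/\log_q|P| = 1/(g+1)$), and Theorem \ref{E-S-M-Theorem}(2) (respectively (3)) yields a second moment of size $\sim\tfrac{1}{3\zeta_\A(2)}|P|^2(\log_q|P|)^2$, so the ratio is of order $(|P|/\log_q|P|)^2$, as claimed. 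The double-sum form of the non-vanishing count in the corollary is automatic because Cauchy--Schwarz is applied to the union over $P\in\mb P_{g+1}$ at once.

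Conceptually the argument is entirely formal once the two moment theorems are in hand; the only point worth checking is that in each case the main term of the first moment comfortably dominates the error terms, so that the squared first moment genuinely reflects the size of $(\text{main term})^2$ rather than being absorbed by error. Inspection of the stated asymptotics confirms this in each family, so the remainder is just bookkeeping. As noted in the remark preceding Corollary \ref{Odd-Ensemble}, all implied constants in $\gg$ may depend on $q$ through the values of $\zeta_\A$, and this dependence is harmless for the claimed lower bounds.
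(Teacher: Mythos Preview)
Your proposal is correct and follows exactly the paper's approach: the paper does not give a separate proof of Corollary~\ref{E-Non-Zero} but simply refers back to the Cauchy--Schwarz argument used for Corollary~\ref{Non-Zero}, with Theorems~\ref{E-F-M-Theorem} and~\ref{E-S-M-Theorem} substituted for the odd-characteristic moment results. Your computations of the main-term sizes and the resulting ratios are accurate in each of the three families.
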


From Theorem \ref{E-F-M-Theorem}, we have the following result concerning the first moment of $L$--functions at $s=1$.
\begin{thm}\label{E-fmoment-1}
Let $q$ be a fixed power of $2$.
As $g\to\infty$, we have
\begin{enumerate}
\item
\begin{align}
\sum_{u \in \mc H_{g+1}} L(1,\chi_{u}) = 2 \zeta_{\A}(2) \frac{q^{2g+1}}{g} + O\left(\frac{q^{2g}}{g^2}\right),
\end{align}
\item
\begin{align}
\sum_{P\in\mb P_{g+1}} \sum_{u\in\mc F_{P}} L(1,\chi_{u})  = \zeta_{\A}(2) \frac{|P|^{2}}{\log_{q}|P|} + O\left(|P|^{\frac{3}{2}}\right),
\end{align}
\item
\begin{align}
\sum_{P\in\mb P_{g+1}} \sum_{u\in\mc F'_{P}} L(1,\chi_{u}) = \zeta_{\A}(2) \frac{|P|^{2}}{\log_{q}|P|} + O\left(|P|^{\frac{3}{2}}\right).
\end{align}
\end{enumerate}
\end{thm}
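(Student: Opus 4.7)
The plan is to deduce Theorem \ref{E-fmoment-1} immediately from Theorem \ref{E-F-M-Theorem} by specializing to $s=1$; no additional machinery is required. For part (1), Theorem \ref{E-F-M-Theorem} (1) is valid for every $s$ with $\mathrm{Re}(s) \ge 1/2$, so I simply set $s=1$. Since $1 \ne 1/2$, the defining cases of $\tilde{I}_g(s)$ give $\tilde{I}_g(1) = \zeta_{\A}(2)$, and the associated error term is $O(g^{-2} q^{2g})$. Plugging in yields the claimed asymptotic with leading coefficient $2\zeta_{\A}(2)$.

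For part (2), the generic statement of Theorem \ref{E-F-M-Theorem} (2) excludes $s$ near $1$ because the formula for $\tilde{J}_g(s)$ in the intermediate range involves the ratio $\zeta_{\A}(2-s)/\zeta_{\A}(1+s)$, which is singular at $s=1$; however, the same theorem provides a separate explicit statement at $s=1$ that coincides verbatim with the assertion of Theorem \ref{E-fmoment-1} (2), so nothing remains to prove. For part (3), Theorem \ref{E-F-M-Theorem} (3) covers every $s$ with $\mathrm{Re}(s) \ge 1/2$ without excluding $s=1$; at this point we land in the branch $\mathrm{Re}(s) \ge 1$, where $\tilde{K}_g(s) = \zeta_{\A}(2s)$, giving $\tilde{K}_g(1) = \zeta_{\A}(2)$ and an error term $O(|P|^{3/2})$, which matches the stated formula.

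I do not expect any obstacle: the entire content of Theorem \ref{E-fmoment-1} is that at $s=1$ the three leading constants in Theorem \ref{E-F-M-Theorem} collapse uniformly to $\zeta_{\A}(2)$, a purely arithmetic observation, and the stated error terms are precisely those recorded in the master theorem. The only subtle point is the pole of $\zeta_{\A}(2-s)/\zeta_{\A}(1+s)$ at $s=1$, and this is already handled by the separate $s=1$ clause built into the statement of Theorem \ref{E-F-M-Theorem} (2); since no such ratio appears in parts (1) and (3), those specializations are completely mechanical.
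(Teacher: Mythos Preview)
Your proposal is correct and matches the paper's own treatment: the paper states Theorem \ref{E-fmoment-1} as an immediate consequence of Theorem \ref{E-F-M-Theorem} (the sentence preceding it reads ``From Theorem \ref{E-F-M-Theorem}, we have the following result concerning the first moment of $L$--functions at $s=1$'') and gives no further argument. Your observation that parts (1) and (3) follow by direct specialization while part (2) is literally restated as the separate $s=1$ clause of Theorem \ref{E-F-M-Theorem} (2) is exactly right.
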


For any $u\in \mc H_{g+1} \cup \mc F_{g+1} \cup \mc F'_{g+1}$, we have a formula
which connects $L(1, \chi_{u})$ and the class number $h_{u}$ of $\OO_{u}$ (\cite[Theorem 5.2]{CY08}):
\begin{align}\label{class-number-2}
L(1, \chi_{u}) = \begin{cases}
q^{-g} h_{u}  & \text{ if $u \in \mc H_{g+1}$,} \\
\zeta_{\A}(2)^{-1} q^{-g} h_{u} R_{u} & \text{ if $u \in \mc F_{g+1}$,} \\
\frac{1}{2}\zeta_{\A}(2) \zeta_{\A}(3)^{-1} q^{-g} h_{u} & \text{ if $u \in \mc F'_{g+1}$,}
\end{cases}
\end{align}
where $R_{u}$ is the regulator of $\OO_{u}$ if $u\in\mc F_{g+1}$.

By combining Theorem \ref{E-fmoment-1} and equation \eqref{class-number-2}, we have the following corollary.

\begin{cor}
Let $q$ be a fixed power of $2$.
As $g\to\infty$, we have
\begin{enumerate}
\item
\begin{align}
\sum_{u \in \mc H_{g+1}} h_{u} = 2 \zeta_{\A}(2) \frac{q^{3g+1}}{g} + O\left(\frac{q^{3g}}{g^2}\right),
\end{align}
\item
\begin{align}
\sum_{P\in\mb P_{g+1}}\sum_{u\in\mc F_{P}} h_{u} R_{u} = \zeta_{\A}(2)^2 q^{-1} \frac{|P|^3}{\log_{q}|P|} + O\left(|P|^{\frac{5}{2}}\right),
\end{align}
\item
\begin{align}
\sum_{P\in\mb P_{g+1}}\sum_{u\in\mc F'_{P}} h_{u} = 2 \zeta_{\A}(3) q^{-1} \frac{|P|^3}{\log_{q}|P|} + O\left(|P|^{\frac{5}{2}}\right).
\end{align}
\end{enumerate}
\end{cor}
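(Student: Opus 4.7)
The plan is to use the class number formula \eqref{class-number-2} to convert each sum of class numbers (or $h_{u} R_{u}$) into $q^{g}$ times a sum of $L(1,\chi_{u})$ values, and then invoke Theorem \ref{E-fmoment-1} directly. There is no harmonic analysis or sieving to do beyond what is already packaged in that theorem; this corollary is essentially bookkeeping.

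For part (1), the formula gives $h_{u} = q^{g} L(1,\chi_{u})$ for $u\in\mc H_{g+1}$, so
\begin{align*}
\sum_{u\in\mc H_{g+1}} h_{u} = q^{g} \sum_{u\in\mc H_{g+1}} L(1,\chi_{u}).
\end{align*}
Substituting the asymptotic from Theorem \ref{E-fmoment-1}(1) immediately yields the stated main term $2\zeta_{\A}(2)\, q^{3g+1}/g$ with error $O(q^{3g}/g^{2})$. For part (2), the formula reads $h_{u} R_{u} = \zeta_{\A}(2) q^{g} L(1,\chi_{u})$ for $u\in\mc F_{g+1}$, and using $|P| = q^{g+1}$ (so $q^{g} = |P|/q$) together with Theorem \ref{E-fmoment-1}(2) gives
\begin{align*}
\sum_{P\in\mb P_{g+1}}\sum_{u\in\mc F_{P}} h_{u} R_{u} = \zeta_{\A}(2) q^{g} \left(\zeta_{\A}(2) \frac{|P|^{2}}{\log_{q}|P|} + O(|P|^{3/2})\right),
\end{align*}
which collapses to the claimed $\zeta_{\A}(2)^{2} q^{-1} |P|^{3}/\log_{q}|P| + O(|P|^{5/2})$. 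Part (3) is handled identically: the formula inverts to $h_{u} = 2\zeta_{\A}(3)\zeta_{\A}(2)^{-1} q^{g} L(1,\chi_{u})$ for $u\in\mc F'_{P}$, and combining with Theorem \ref{E-fmoment-1}(3) the $\zeta_{\A}(2)$ factors cancel, leaving $2\zeta_{\A}(3) q^{-1}|P|^{3}/\log_{q}|P|$ as the main term.

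There is no genuine obstacle here; each step is a direct substitution. The only minor care is to track the relation $q^{g} = |P|/q$ consistently in parts (2) and (3) so that the main term and the error term scale correctly to $|P|^{3}/\log_{q}|P|$ and $|P|^{5/2}$ respectively, and to verify that the multiplicative constants ($\zeta_{\A}(2)^{2}q^{-1}$ in (2), $2\zeta_{\A}(3)q^{-1}$ in (3)) come out exactly as claimed after cancellation. All analytic content is already in Theorem \ref{E-fmoment-1}.
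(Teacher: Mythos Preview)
Your proposal is correct and follows exactly the approach the paper indicates: the corollary is stated immediately after Theorem~\ref{E-fmoment-1} and \eqref{class-number-2} with the remark that it is obtained ``by combining'' them, and your substitutions carry out precisely that combination with the constants and error terms tracked correctly.
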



\section{``Approximate" functional equations of $L$--functions}
For any separable quadratic extension $K$ of $k$, let $\chi_{K}$ denote the character $\chi_{D}$ if $q$ is odd and $K=k(\sqrt{D})$,
where $D$ is a non-constant square free polynomials $D\in\A$ with $sgn(D) \in \{1, \gamma\}$,
or the character $\chi_{u}$ if $q$ is even and $K = K_{u}$, where $u\in k$ is normalized as in \eqref{normalization-2}.
Let $L(s,\chi_{K})$ be the $L$--function associated to $\chi_{K}$.
Then $L(s,\chi_{K})$ is a polynomial in $z = q^{-s}$ of degree $\delta_{K} = 2 g_{K} + \frac{1}{2}(1+(-1)^{\varepsilon(K)})$,
where $g_{K}$ is the genus of $K$, $\varepsilon(K) = 1$ if $K$ is ramified imaginary and $\varepsilon(K) = 0$ otherwise.
Write
\begin{align}
L(s,\chi_{K}) = \sum_{n=0}^{\delta_{K}} A_{K}(n) q^{-ns} \quad\text{ with $A_{K}(n) := \sum_{f\in\A_{n}^{+}} \chi_{K}(f)$}.
\end{align}

\begin{lem}\label{A-FE}
\begin{enumerate}
\item
If $K$ is ramified imaginary, then we have
\begin{align}\label{A-FE-Imaginary}
L(s,\chi_{K}) = \sum_{n=0}^{g_{K}} A_{K}(n) q^{-sn} + q^{(1-2s)g_{K}} \sum_{n=0}^{g_{K}-1} A_{K}(n) q^{(s-1)n}.
\end{align}
\item
If $K$ is real, then we have
\begin{align}\label{A-FE-Real}
L(s,\chi_{K}) = \sum_{n=0}^{g_{K}} A_{K}(n) q^{-sn} - q^{-(g_{K}+1)s} \sum_{n=0}^{g_{K}} A_{K}(n) + H_{K}(s),
\end{align}
where $H_{K}(1) := \zeta_{\A}(2)^{-1} q^{-g_{K}}\dis\sum_{n=0}^{g_{K}-1} \left(g_{K}-n\right) A_{K}(n)$ and, for $s\ne 1$,
\begin{align}
\hspace{2em}H_{K}(s) := q^{(1-2s)g_{K}} \frac{\zeta_{\A}(2-s)}{\zeta_{\A}(1+s)} \sum_{n=0}^{g_{K}-1} q^{(s-1)n} A_{K}(n)
- q^{-s g_{K}} \frac{\zeta_{\A}(2-s)}{\zeta_{\A}(1+s)} \sum_{n=0}^{g_{K}-1} A_{K}(n).
\end{align}
\item
If $K$ is inert imaginary, then we have
\begin{align}\label{A-FE-Inert}
L(s,\chi_{K}) &= \sum_{n=0}^{g_{K}} A_{K}(n) q^{-sn} + q^{-(\delta_D+1)s} \sum_{n=0}^{g_{K}}(-1)^{n+g_{K}} A_{K}(n) \nonumber \\
&\hspace{1em} + \left(\frac{1+q^{-s}}{1+q^{s-1}}\right) q^{(1-2s)g_{K}} \sum_{n=0}^{g_{K}-1} A_{K}(n) q^{(s-1)n}  \nonumber \\
&\hspace{1em} + \left(\frac{1+q^{-s}}{1+q^{s-1}}\right) q^{-sg_{K}} \sum_{n=0}^{g_{K}-1} (-1)^{n+g_{K}+1} A_{K}(n).
\end{align}
\end{enumerate}
\end{lem}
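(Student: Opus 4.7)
The plan is to derive all three identities from the single functional equation $\mathcal{L}^{*}(z,\chi_{K}) = (qz^2)^{g_K}\mathcal{L}^{*}(1/(qz),\chi_K)$ satisfied by the completed $L$--function, by expanding both sides, equating coefficients, and then splitting the resulting even-degree polynomial at its midpoint.

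First I would dispatch case (1), the ramified imaginary case, which is the cleanest. Here $\mathcal{L}^{*}(z,\chi_K)=\mathcal{L}(z,\chi_K) = \sum_{n=0}^{2g_K} A_K(n)z^n$. Substituting into the functional equation and matching coefficients of $z^n$ yields the palindromic symmetry $A_K(2g_K-n) = q^{g_K-n}A_K(n)$. Splitting the sum $L(s,\chi_K)=\sum_{n=0}^{2g_K}A_K(n)q^{-ns}$ at $n=g_K$ and applying the change of variable $m=2g_K-n$ in the upper half converts it into $q^{(1-2s)g_K}\sum_{m=0}^{g_K-1}A_K(m)q^{(s-1)m}$, giving \eqref{A-FE-Imaginary} directly.

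For cases (2) and (3), the idea is to reduce to case (1) applied to $\mathcal{L}^{*}$ and then translate back to $A_K(n)$. In the real case, writing $\mathcal{L}(z,\chi_K)=(1-z)\mathcal{L}^{*}(z,\chi_K)$ with $\mathcal{L}^{*}(z,\chi_K)=\sum_{n=0}^{2g_K}B_K(n)z^n$ gives the relation $A_K(n)=B_K(n)-B_K(n-1)$, hence $B_K(n)=\sum_{m=0}^{n}A_K(m)$. Applying case (1) to $\mathcal{L}^{*}$ and multiplying by $(1-q^{-s})$, the ``lower'' piece telescopes cleanly to $\sum_{n=0}^{g_K}A_K(n)q^{-ns}-q^{-(g_K+1)s}\sum_{n=0}^{g_K}A_K(n)$, which are the first two terms of \eqref{A-FE-Real}. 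The ``upper'' piece $(1-q^{-s})q^{(1-2s)g_K}\sum_{n=0}^{g_K-1}B_K(n)q^{(s-1)n}$ requires exchanging the order of summation and evaluating the inner geometric series $\sum_{n=m}^{g_K-1}q^{(s-1)n}$; using $\zeta_\A(2-s)/\zeta_\A(1+s)=(1-q^{-s})/(1-q^{s-1})$ this collapses exactly to $H_K(s)$. The case $s=1$ must be treated separately: the geometric series degenerates to $g_K-m$, and the factor $(1-q^{-1})=\zeta_\A(2)^{-1}$ emerges, yielding the stated expression for $H_K(1)$.

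Case (3), the inert imaginary case, is structurally parallel. One writes $\mathcal{L}(z,\chi_K)=(1+z)\mathcal{L}^{*}(z,\chi_K)$ with coefficients $C_K(n)$ satisfying $A_K(n)=C_K(n)+C_K(n-1)$, so $C_K(n)=\sum_{m=0}^{n}(-1)^{n-m}A_K(m)$. The lower half contributes $\sum_{n=0}^{g_K}A_K(n)q^{-ns}+C_K(g_K)q^{-(g_K+1)s}$, and the identity $C_K(g_K)=\sum_{n=0}^{g_K}(-1)^{n+g_K}A_K(n)$ supplies the second displayed term. For the upper half one again swaps the order of summation and sums the alternating geometric series $\sum_{k=0}^{g_K-1-m}(-q^{s-1})^k=(1-(-1)^{g_K-m}q^{(g_K-m)(s-1)})/(1+q^{s-1})$; this produces the prefactor $(1+q^{-s})/(1+q^{s-1})$ together with the two remaining sums in \eqref{A-FE-Inert}, with the sign $(-1)^{m+g_K+1}$ arising from the negative of $(-1)^{g_K-m}$.

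I expect the main obstacle to be purely bookkeeping: tracking the telescoping identity $B_K(g_K)=\sum_{n=0}^{g_K}A_K(n)$ (and its signed analogue for $C_K$) alongside the index shift $m=2g_K-n$, while keeping the geometric-series evaluation valid (so one must isolate the degenerate point $s=1$ in case (2), where $q^{s-1}=1$, and verify that no analogous degeneration occurs for $-q^{s-1}=1$ in case (3) at real $s\ge \tfrac12$). No deep input beyond the functional equation \eqref{functional equation} and the definitions \eqref{completed-L} of $\mathcal{L}^{*}$ is needed.
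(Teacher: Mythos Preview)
Your proposal is correct and follows essentially the same approach as the paper's proof: both derive the palindromic symmetry of the coefficients of $\mathcal{L}^{*}$ from the functional equation, split at the midpoint $g_K$, express the $\mathcal{L}^{*}$-coefficients as (signed) partial sums of the $A_K(n)$ via \eqref{completed-L}/\eqref{E-completed-L}, and then multiply through by $(1\mp z)$ (i.e.\ $(1\mp q^{-s})$), telescoping and summing geometric series to recover the stated forms, with the $s=1$ degeneration in case~(2) handled separately. The only cosmetic difference is that the paper uses a single notation $A_K^{*}(n)$ for what you call $B_K(n)$ and $C_K(n)$, and does the interchange-of-summation step before multiplying by $(1\mp z)$ rather than after.
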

\begin{proof}
Write
\begin{equation}
\mc L(z,\chi_{K}) = \sum_{n=0}^{\delta_{K}} A_{K}(n) z^{n} \quad\text{ and }\quad
\mc L^{*}(z,\chi_{K}) = \sum_{n=0}^{2g_{K}} A_{K}^{*}(n) z^{n}.
\end{equation}
By \eqref{completed-L} and \eqref{E-completed-L}, we have
\begin{align}\label{C-O}
A_{K}^{*}(n) = \begin{cases}
A_{K}(n) & \text{ if $K$ is ramified imaginary,} \\
\sum_{i=0}^{n} A_{K}(i) & \text{ if $K$ is real,} \\
\sum_{i=0}^{n} (-1)^{n-i} A_{K}(i) & \text{ if $K$ is inert imaginary.}
\end{cases}
\end{align}
By substituting $\mc L^{*}(z,\chi_{K}) = \sum_{n=0}^{2g_{K}} A_{K}^{*}(n) z^{n}$ into the functional equation \eqref{functional equation} (or \eqref{E-completed-L})
\begin{align}
\sum_{n=0}^{2g_{K}} A_{K}^{*}(n) z^{n} = \sum_{n=0}^{2g_{K}} A_{K}^{*}(n) q^{g_{K}-n} z^{2g_{K}-n}
= \sum_{n=0}^{2g_{K}} A_{K}^{*}(2g_{K}-n) q^{n-g_{K}} z^{n},
\end{align}
and equating coefficients, we have
\begin{align}
A_{K}^{*}(n) = A_{K}^{*}(2g_{K}-n) q^{n-g_{K}} \quad\text{ or }\quad A_{K}^{*}(2g_{K}-n) = A_{K}^{*}(n) q^{g_{K}-n}.
\end{align}
So we can write $\mc L^{*}(z,\chi_{K})$ as
\begin{align}\label{Approximate-FE}
\mc L^{*}(z,\chi_{K}) = \sum_{n=0}^{g_{K}} A_{K}^{*}(n) z^{n}
+ q^{g_{K}} z^{2g_{K}} \sum_{n=0}^{g_{K}-1} A_{K}^{*}(n) q^{-n} z^{-n}.
\end{align}
If $K$ is ramified imaginary, since $\mc L(z,\chi_{K}) = \mc L^{*}(z,\chi_{K})$,
we have that \eqref{A-FE-Imaginary} follows immediately from \eqref{Approximate-FE}.
Suppose that $K$ is real.
By \eqref{C-O} and \eqref{Approximate-FE}, we have
\begin{align}\label{A-FE-001}
\mc L^{*}(z,\chi_{K}) &= \sum_{n=0}^{g_{K}} \left(\sum_{i=0}^{n} A_{K}(i)\right) z^{n}
+ q^{g_{K}} z^{2g_{K}} \sum_{n=0}^{g_{K}-1} \left(\sum_{j=0}^{n} A_{K}(j)\right) q^{-n} z^{-n} \nonumber \\
&= \sum_{n=0}^{g_{K}} \left(\frac{z^{n}-z^{g_{K}+1}}{1-z}\right) A_{K}(n) + H^{*}(z),
\end{align}
where $H^{*}(q^{-1}) := q^{-g_{K}}\dis\sum_{n=0}^{g_{K}-1} \left(g_{K}-n\right) A_{K}(n)$ and, for $z\ne q^{-1}$,
\begin{align}
H^{*}(z) := \dfrac{q^{g_{K}} z^{2g_{K}}}{1-q^{-1} z^{-1}} \dis\sum_{n=0}^{g_{K}-1} q^{-n} z^{-n} A_{K}(n)
- \dfrac{z^{g_{K}}}{1-q^{-1} z^{-1}} \dis\sum_{n=0}^{g_{K}-1} A_{K}(n).
\end{align}
By multiplying $(1-z)$ on \eqref{A-FE-001} and putting $z=q^{-s}$, we get \eqref{A-FE-Real}.
Finally, consider the case that $K$ is inert imaginary.
By \eqref{C-O} and \eqref{Approximate-FE}, we have
\begin{align}\label{A-FE-002}
\mc L^{*}(z,\chi_{K}) &= \sum_{n=0}^{g_{K}} \left(\sum_{i=0}^{n} (-1)^{n-i} A_{K}(i)\right) z^{n}
+ q^{g_{K}} z^{2g_{K}} \sum_{n=0}^{g_{K}-1} \left(\sum_{j=0}^{n} (-1)^{n-j} A_{K}(j)\right) q^{-n} z^{-n} \nonumber \\
&\hspace{-1.5em}= \frac{1}{1+z} \sum_{n=0}^{g_{K}} A_{K}(n) z^{n} + \frac{z^{g_{K}+1}}{1+z} \sum_{n=0}^{g_{K}}(-1)^{n+g_{K}} A_{K}(n) \nonumber\\
&\hspace{-1em}+ \frac{q^{g_{K}} z^{2g_{K}}}{1+q^{-1} z^{-1}} \sum_{n=0}^{g_{K}-1} A_{K}(n) q^{-n} z^{-n}
+ \frac{z^{g_{K}}}{1+q^{-1} z^{-1}} \sum_{n=0}^{g_{K}-1} (-1)^{n+g_{K}+1} A_{K}(n).
\end{align}
By multiplying $(1+z)$ on \eqref{A-FE-002} and putting $z=q^{-s}$, we get \eqref{A-FE-Inert}.
\end{proof}

Write
\begin{align}
L(s,\chi_{K})^2 = \sum_{n=0}^{2\delta_{K}} B_{K}(n) q^{-ns}
\quad\text{ with $B_{K}(n) := \sum_{f\in\A_{n}^{+}} d(f)\chi_{K}(f)$},
\end{align}
where $d(f)$ denotes the divisor function on $\A^{+}$:
\begin{align}
d(f) := \sum_{\substack{N\in\A^{+}\\ N|f}} 1.
\end{align}

\begin{lem}\label{A-FES}
\begin{enumerate}
\item
If $K$ is ramified imaginary, then we have
\begin{align}\label{A-FES-Imaginary}
L(\tfrac{1}2,\chi_{K})^{2} = \sum_{n=0}^{2g_{K}} B_{K}(n) q^{-\frac{n}2} + \sum_{n=0}^{2g_{K}-1} B_{K}(n) q^{-\frac{n}2}.
\end{align}
\item
If $K$ is real, then we have
\begin{align}\label{A-FES-Real}
L(\tfrac{1}2,\chi_{D})^{2} &= \sum_{n=0}^{2g_{K}} B_{K}(n) q^{-\frac{n}{2}}
+ \sum_{n=0}^{2g_{K}-1} B_{K}(n) q^{-\frac{n}{2}}  \nonumber \\
&\quad- q^{-(g_{K}+\frac12)} \sum_{n=0}^{2g_{K}} B_{K}(n) - q^{-g_{K}} \sum_{n=0}^{2g_{K}-1} B_{K}(n) \nonumber \\
&\quad - \zeta_{\A}(\tfrac{3}2)^{-1} q^{-(g_{K}+\frac12)} \sum_{n=0}^{2g_{K}} (2g_{K}+1-n) B_{K}(n) \nonumber \\
&\quad - \zeta_{\A}(\tfrac{3}2)^{-1} q^{-g_{K}}\sum_{n=0}^{2g_{K}-1} (2g_{K}-n) B_{K}(n).
\end{align}
\item
If $K$ is inert imaginary, then we have
\begin{align}\label{A-FES-Inert}
L(\tfrac{1}2,\chi_{D})^{2} &= \sum_{n=0}^{2g_{K}} B_{K}(n) q^{-\frac{n}{2}}
+ \sum_{n=0}^{2g_{K}-1} q^{-\frac{n}{2}} B_{K}(n) \nonumber\\
&\quad + q^{-(g_{K}+\frac12)} \sum_{n=0}^{2g_{K}} (-1)^n B_{K}(n)
+ q^{-g_{K}} \sum_{n=0}^{2g_{K}-1} (-1)^{n} B_{K}(n)  \nonumber\\
&\quad + \frac{\zeta_{\A}(\tfrac{3}2)}{\zeta_{\A}(2)} q^{-(g_{K}+\frac12)} \sum_{n=0}^{2g_{K}} (-1)^n (2g_{K}+1-n) B_{K}(n)  \nonumber \\
&\quad + \frac{\zeta_{\A}(\tfrac{3}2)}{\zeta_{\A}(2)} q^{-g_{K}}\sum_{n=0}^{2g_{K}-1} (-1)^{n} (2g_{K}-n) B_{K}(n).
\end{align}
\end{enumerate}
\end{lem}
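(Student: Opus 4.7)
The plan is to imitate the proof of Lemma~\ref{A-FE} verbatim, but applied to the \emph{square} of the completed $L$-function and specialized to $z=q^{-1/2}$. Write $\mc{L}^{*}(z,\chi_{K})^{2}=\sum_{n=0}^{4g_{K}}B^{*}_{K}(n)z^{n}$. Squaring the functional equation~\eqref{functional equation} (or~\eqref{E-functional equation}) gives $\mc{L}^{*}(z,\chi_{K})^{2}=(qz^{2})^{2g_{K}}\mc{L}^{*}(1/(qz),\chi_{K})^{2}$, which forces the coefficient symmetry $B^{*}_{K}(n)=q^{n-2g_{K}}B^{*}_{K}(4g_{K}-n)$. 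Splitting the sum at $n=2g_{K}$ and using this symmetry in the upper half produces the approximate functional equation for the completed square,
\[
\mc{L}^{*}(z,\chi_{K})^{2}=\sum_{n=0}^{2g_{K}}B^{*}_{K}(n)z^{n}+q^{2g_{K}}z^{4g_{K}}\sum_{n=0}^{2g_{K}-1}B^{*}_{K}(n)q^{-n}z^{-n},
\]
and at $z=q^{-1/2}$ both prefactors collapse to $1$, giving
\[
L^{*}(\tfrac{1}{2},\chi_{K})^{2}=\sum_{n=0}^{2g_{K}}B^{*}_{K}(n)q^{-n/2}+\sum_{n=0}^{2g_{K}-1}B^{*}_{K}(n)q^{-n/2}.
\]
Since $\mc{L}^{*}=\mc{L}$ and $B^{*}_{K}=B_{K}$ in the ramified imaginary case, this is already~\eqref{A-FES-Imaginary}.

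For the real and inert imaginary cases I would convert $B^{*}_{K}(n)$ into $B_{K}(n)$ using \eqref{completed-L}/\eqref{E-completed-L}: in both cases $\mc{L}(z,\chi_{K})=(1-\eta z)\mc{L}^{*}(z,\chi_{K})$, with $\eta=+1$ for real and $\eta=-1$ for inert imaginary, so squaring and expanding $(1-\eta z)^{-2}=\sum_{k\ge 0}\eta^{k}(k+1)z^{k}$ yields
\[
B^{*}_{K}(n)=\sum_{i=0}^{n}\eta^{n-i}(n-i+1)B_{K}(i).
\]
Substitute this into the identity for $L^{*}(1/2,\chi_{K})^{2}$, swap the order of summation, and evaluate the inner sum
\[
\sum_{m=0}^{M}(m+1)x^{m}=\frac{1-(M+2)x^{M+1}+(M+1)x^{M+2}}{(1-x)^{2}}
\]
at $x=\eta q^{-1/2}$. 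Finally multiply by $(1-\eta q^{-1/2})^{2}$ to pass from $L^{*}(1/2,\chi_{K})^{2}$ to $L(1/2,\chi_{K})^{2}$; this factor cancels the denominator of the closed form exactly.

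After the cancellation each contribution lands in one of the four sum types $\sum B_{K}(n)q^{-n/2}$, $\sum B_{K}(n)$, $\sum(2g_{K}+1-n)B_{K}(n)$, $\sum(2g_{K}-n)B_{K}(n)$, each truncated at either $2g_{K}$ or $2g_{K}-1$ and carrying either trivial or alternating signs $\eta^{n}$. Grouping by powers of $q^{-1/2}$ and using $\zeta_{\A}(\tfrac{3}{2})^{-1}=1-q^{-1/2}$ (real case) respectively $\zeta_{\A}(\tfrac{3}{2})/\zeta_{\A}(2)=1+q^{-1/2}$ (inert imaginary case) reproduces exactly the coefficients in~\eqref{A-FES-Real} and~\eqref{A-FES-Inert}. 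The main obstacle is the bookkeeping: the $q^{-(g_{K}+1)}$ contribution produced by the $(M+1)x^{M+2}$ term of the closed form has to merge with the $q^{-1/2}$-part of the $(1-\eta q^{-1/2})$ expansion acting on the neighbouring sum, and the boundary values at $n=2g_{K}$ must be redistributed between the unweighted and weighted partial sums so that the final coefficients of $q^{-(g_{K}+1/2)}$ and $q^{-g_{K}}$ assemble correctly. A useful correctness check is the identity $V_{1}-V_{2}=U_{1}$ relating the weighted and unweighted partial sums of $B_{K}$, which shows that the apparent surplus of $q^{-(g_{K}+1/2)}$ mass produced by one half of the functional equation is exactly cancelled by the complementary shortfall from the other half.
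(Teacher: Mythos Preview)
Your proposal is correct and follows essentially the same approach as the paper: both square the functional equation to obtain the symmetry $B^{*}_{K}(n)=q^{n-2g_{K}}B^{*}_{K}(4g_{K}-n)$, split the sum at $2g_{K}$, then (in the real and inert cases) use the relation $B^{*}_{K}(n)=\sum_{i=0}^{n}\eta^{n-i}(n-i+1)B_{K}(i)$ coming from $(1-\eta z)^{-2}$, evaluate at $z=q^{-1/2}$, and multiply through by $(1-\eta q^{-1/2})^{2}$. The paper simply records the outcome of the inner summation directly (its equations \eqref{AFES-001} and \eqref{AFES-002}) rather than invoking the closed form $\sum_{m=0}^{M}(m+1)x^{m}=\frac{1-(M+2)x^{M+1}+(M+1)x^{M+2}}{(1-x)^{2}}$ explicitly, but this is the same computation.
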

\begin{proof}
Write
\begin{align}
\mc L(z,\chi_{D})^2 = \sum_{n=0}^{2\delta_{K}} B_{K}(n) z^{n} \quad\text{ and }\quad
\mc L^{*}(z,\chi_{D})^2 = \sum_{n=0}^{4 g_{K}} B_{K}^{*}(n) z^{n}.
\end{align}
By \eqref{completed-L} and \eqref{E-completed-L}, we have
\begin{align}\label{C-O-2}
B_{K}^{*}(n) = \begin{cases}
B_{K}(n) & \text{ if $K$ is ramified imaginary,} \\
\sum_{i=0}^{n} (n+1-i) B_{K}(i) & \text{ if $K$ is real,} \\
\sum_{i=0}^{n} (-1)^{n+i} (n+1-i) B_{K}(i) & \text{ if $K$ is inert imaginary.}
\end{cases}
\end{align}
From the functional equation
\begin{align}
\mathcal{L}^{*}(z,\chi_{K})^{2} = (qz^{2})^{2g_{K}} \mathcal{L}^{*}\left(\frac{1}{qz},\chi_{K}\right)^2,
\end{align}
we get
\begin{align}
B_{K}^{*}(n) = B_{K}^{*}(4 g_{K}-n) q^{n-2g_{K}} \quad\text{ or }\quad B_{K}^{*}(4 g_{K}-n)
= q^{2g_{K}-n} B_{K}^{*}(n).
\end{align}
Then we have
\begin{align}\label{A-FE-Square}
\mathcal{L}^{*}(z,\chi_{K})^{2}
= \sum_{n=0}^{2g_{K}} B_{K}^{*}(n) z^{n}
+ q^{2g_{K}} z^{4g_{K}} \sum_{n=0}^{2g_{K}-1} B_{K}^{*}(n) q^{-n} z^{-n}.
\end{align}
If $K$ is ramified imaginary, since $\mc L(z,\chi_{K})^2 = \mc L^{*}(z,\chi_{K})^2$, we have that
\eqref{A-FES-Imaginary} follows immediately from \eqref{A-FE-Square}.
Suppose that $K$ is real.
By \eqref{C-O-2} and \eqref{A-FE-Square}, we have
\begin{align}\label{AFES-001}
\mathcal{L}^{*}(q^{-\frac{1}2},\chi_{K})^{2}
&= \sum_{n=0}^{2g_{K}} \left\{\frac{q^{-\frac{n}{2}}- q^{-(g_{K}+\frac12)}}{(1-q^{-\frac12})^2}
-\frac{(2g_{K}+1-n)q^{-(g_{K}+\frac12)}}{1-q^{-\frac12}}\right\} B_{K}(n) \nonumber \\
&\qquad + \sum_{n=0}^{2g_{K}-1}\left\{\frac{q^{-\frac{n}{2}}-q^{-g_{K}}}{(1-q^{-\frac12})^2}
-\frac{(2g_{K}-n)q^{-g_{K}}}{1-q^{-\frac12}}\right\} B_{K}(n).
\end{align}
Multiplying $(1-q^{-\frac12})^2$ on \eqref{AFES-001}, we get \eqref{A-FES-Real}.
Suppose that $K$ is inert imaginary.
By \eqref{C-O-2} and \eqref{A-FE-Square}, we have
\begin{align}\label{AFES-002}
\mathcal{L}^{*}(q^{-\frac{1}2},\chi_{K})^{2}
&= \sum_{n=0}^{2g_{K}} \left\{\frac{q^{-\frac{n}{2}}+ (-1)^{n} q^{-(g_{K}+\frac12)}}{(1+q^{-\frac12})^2}
+\frac{(-1)^{n}(2g_{K}+1-n)q^{-(g_{K}+\frac12)}}{1+q^{-\frac12}}\right\} B_{K}(n) \nonumber \\
&\qquad + \sum_{n=0}^{2g_{K}-1}\left\{\frac{q^{-\frac{n}{2}}+(-1)^{n}q^{-g_{K}}}{(1+q^{-\frac12})^2}
+\frac{(-1)^{n}(2g_{K}-n)q^{-g_{K}}}{1+q^{-\frac12}}\right\} B_{K}(n).
\end{align}
Multiplying $(1+q^{-\frac12})^2$ on \eqref{AFES-002}, we get \eqref{A-FES-Inert}.
\end{proof}

\section{First moment of prime $L$--functions}
\subsection{Odd characteristic case}
In this subsection, we give a proof of Theorem \ref{F-M-Theorem}.
In \S\ref{Odd-F-M-SS1}, we obtain several results of the contribution of squares and of non-squares,
which will be used to calculate the first moment of prime $L$-functions
in \S\ref{Odd-F-M-SS3}, \S\ref{Odd-F-M-SS4} and \S\ref{Odd-F-M-SS5}.

In \S\ref{Odd-F-M-SS1}, $\mb H_{g}$ will denote $\mb P_{2g+1}$ or $\mb P_{2g+2}$ for any positive integer $g$.

\subsubsection{\bf Preparations for the proof}\label{Odd-F-M-SS1}
We first consider the contribution of squares.
We will use the Prime polynomial Theorem (Theorem \ref{thm:pnt}) in the following form
\begin{align}\label{thm:pnt-2}
\sum_{P\in\mb H_{g}} 1 = \frac{|P|}{\log_{q}|P|} + O\left(\frac{|P|^{\frac{1}2}}{\log_{q}|P|}\right).
\end{align}

\begin{prop}\label{Contribution-Square-001}
\begin{enumerate}
\item
For $s\in\mb C$ with ${\rm Re}(s) \ge \frac{1}2$, we have
\begin{align}
\hspace{3em} \sum_{P\in\mb H_{g}} \sum_{n=0}^{g} (\pm 1)^{n} q^{-sn} \sum_{\substack{f\in\A_{n}^+\\ f=\square}} \chi_{P}(f)
= A_{g}(s) \frac{|P|}{\log_{q}|P|} + O\left(|P|^{\frac{1}2}\right),
\end{align}
where
\begin{align}
A_{g}(s) = \begin{cases}
[\frac{g}2]+1 & \text{ if $s=\frac12$,} \\
\zeta_{\A}(2s) (1-q^{([\frac{g}2]+1)(1-2s)}) & \text{ if $s\ne \frac12$. }
\end{cases}
\end{align}
\item
For $s\in\mb C$ with ${\rm Re}(s) \ge \frac{1}2$, we have
\begin{align}
\hspace{2em} q^{(1-2s)g} \sum_{P\in\mb H_{g}} \sum_{n=0}^{g-1} (\pm 1)^{n} q^{(s-1)n} \sum_{\substack{f\in\A_{n}^+\\ f=\square}} \chi_{P}(f)
= B_{g}(s) \frac{|P|}{\log_{q}|P|} + O\left(|P|^{\frac{3}4-\frac{s}2}\right),
\end{align}
where
\begin{align}
B_{g}(s) = \begin{cases}
[\frac{g-1}2] + 1 & \text{ if } s = \frac12,  \vspace{0.2em}\\
\zeta_{\A}(2s) \big\{q^{(g-[\frac{g-1}2])(1-2s)} - q^{(g+1)(1-2s)}\big\} & \text{ if } s \ne \frac12.
\end{cases}
\end{align}
\item
Let $h \in \{g-1, g\}$.
For $s\in\mb C$ with ${\rm Re}(s) \ge \frac{1}2$, we have
\begin{align}
\hspace{3em} q^{-(h+1)s} \sum_{P\in\mb H_{g}}\sum_{n=0}^{h} \sum_{\substack{f\in\A_{n}^+\\ f=\square}} \chi_{P}(f)
= C_{h}(s) \frac{|P|}{\log_{q}|P|} + O\left(|P|^{\frac{3}4-\frac{s}2}\right),
\end{align}
where $C_{h}(s) = \zeta_{\A}(2) q^{-(h+1)s-1} (q^{[\frac{h}2]+1}-1)$.
\item
For $s\in\mb C$ with ${\rm Re}(s) \ge \frac{1}2$, we have
\begin{align}
\hspace{2em} \zeta_{\A}(2)^{-1} q^{-g} \sum_{P\in\mb P_{2g+2}}\sum_{n=0}^{g-1} (g-n)
\sum_{\substack{f\in\A_{n}^+\\ f=\square}} \chi_{P}(f)
= B(g) \frac{|P|^{\frac{1}2}}{\log_{q}|P|} + O\left(|P|^{\frac{1}4}\right),
\end{align}
where $B(g) = q^{([\frac{g-1}2]+1)} \left\{2\zeta_{\A}(2)- \tfrac{1}{2}(1+(-1)^{g+1})\right\} - g - 2 \zeta_{\A}(2)$.
\end{enumerate}
\end{prop}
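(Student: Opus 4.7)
The plan is to reduce each of (1)--(4) to a geometric (or arithmetic-geometric) sum by exploiting the triviality of $\chi_{P}$ on squares coprime to $P$, and then extract the main term via the Prime Polynomial Theorem in the form \eqref{thm:pnt-2}. Concretely, if $f \in \A_{n}^{+}$ is a perfect square, then $n$ is necessarily even and $f = h^{2}$ for a unique $h \in \A_{n/2}^{+}$, whence
\[
\chi_{P}(f) \;=\; \chi_{P}(h)^{2} \;=\; \begin{cases} 1 & \text{if } P\nmid h, \\ 0 & \text{if } P \mid h. \end{cases}
\]
In all four statements one has $n \le g$, so $\deg h = n/2 \le g/2 < \deg P$, which forces $P\nmid h$ automatically. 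Consequently
\[
\sum_{\substack{f\in\A_{n}^{+}\\ f = \square}} \chi_{P}(f) \;=\; \begin{cases} q^{n/2} & n \text{ even,} \\ 0 & n \text{ odd,}\end{cases}
\]
is independent of $P$, and the signs $(\pm 1)^{n}$ appearing in (1) and (2) always evaluate to $+1$ on the surviving even indices. Thus each double sum factors as $\#\mb H_{g}$ times a closed combinatorial quantity, and \eqref{thm:pnt-2} yields the main term from the $|P|/\log_{q}|P|$ piece and the claimed error from the $O(|P|^{1/2}/\log_{q}|P|)$ piece.

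For parts (1)--(3), writing $n = 2m$ reduces the inner sum to the plain geometric series $\sum_{m=0}^{[g/2]} q^{m(1-2s)}$ in (1), $q^{(1-2s)g}\sum_{m=0}^{[(g-1)/2]} q^{m(2s-1)}$ in (2), and $q^{-(h+1)s}\sum_{m=0}^{[h/2]} q^{m}$ in (3). Splitting the cases $s=\tfrac12$ (where each sum collapses to its number of terms) and $s\ne\tfrac12$, and using the identities $\zeta_{\A}(2s) = (1-q^{1-2s})^{-1}$ and $\zeta_{\A}(2) = q/(q-1)$ to tidy up, one recovers exactly the stated $A_{g}(s)$, $B_{g}(s)$, $C_{h}(s)$.

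Part (4) is the only genuinely laborious step: the factor $g-n = g-2m$ turns the inner sum into an arithmetic-geometric combination
\[
\sum_{m=0}^{M} (g-2m)\, q^{m} \;=\; g \sum_{m=0}^{M} q^{m} \;-\; 2 \sum_{m=0}^{M} m\, q^{m}, \qquad M = [(g-1)/2].
\]
Applying the standard closed form $\sum_{m=0}^{M} m q^{m} = \frac{Mq^{M+2} - (M+1)q^{M+1} + q}{(q-1)^{2}}$, multiplying by $\zeta_{\A}(2)^{-1} q^{-g}$ together with the main term $q^{2g+2}/(2g+2)$ of $\#\mb P_{2g+2}$, and simplifying via $\zeta_{\A}(2) = q/(q-1)$, assembles the stated coefficient $B(g)$; the parity of $g$ enters through $M$ and is precisely what is packaged by the $\tfrac12(1+(-1)^{g+1})$ term.

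Finally, the error estimates are uniform in the half-plane $\mathrm{Re}(s) \ge \tfrac12$: there every factor $q^{(1-2s)(\cdot)}$ has modulus at most $1$, so each inner coefficient is at worst $O(\log_{q}|P|)$ at $s=\tfrac12$ and $O(1)$ otherwise. Pairing this with the PPT error $O(|P|^{1/2}/\log_{q}|P|)$ absorbs the logarithm and gives $O(|P|^{1/2})$ in (1) and $O(|P|^{3/4-s/2})$ in (2)--(3), the extra $|P|^{-s/2}$ arising from the explicit $q^{-(h+1)s}$ prefactor, and $O(|P|^{1/4})$ in (4) from the $q^{-g}$ prefactor. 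The only real obstacle is the bookkeeping in the arithmetic-geometric sum of (4); everything else follows mechanically from the triviality of $\chi_{P}$ on coprime squares combined with Theorem \ref{thm:pnt}.
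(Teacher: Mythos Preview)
Your proposal is correct and follows exactly the same route as the paper: observe that for $f=L^{2}$ with $\deg L \le g/2 < \deg P$ one has $\chi_{P}(L^{2})=1$, so the inner sum is a $P$-independent (arithmetic-)geometric sum, then factor out $\#\mb H_{g}$ and invoke the Prime Polynomial Theorem \eqref{thm:pnt-2}. The paper in fact writes out only part (1) and dismisses (2)--(4) as ``similar'', so your explicit treatment of the arithmetic-geometric sum in (4) is more detailed than what appears there. One small wording issue: in your final paragraph the bound on the inner coefficient should be stated as $O(\log_{q}|P|)$ uniformly for $\mathrm{Re}(s)\ge\tfrac12$ (not just at $s=\tfrac12$), and the $|P|^{-s/2}$ saving in the error of (2) comes from the decay of $B_{g}(s)$ itself (via the prefactor $q^{(1-2s)g}$), not from a $q^{-(h+1)s}$ factor as in (3); neither point affects the argument.
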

\begin{proof}
(1) For any $P\in\mb H_{g}$ and $L\in\A_{l}^{+}$ with $l \le g$, we have $\chi_{P}(L^2) = 1$.
By \eqref{thm:pnt-2}, we have
\begin{align}\label{Contribution-Square-001-1}
\sum_{P\in\mb H_{g}} \sum_{n=0}^{g} (\pm 1)^{n} q^{-sn} \sum_{\substack{f\in\A_{n}^+\\ f=\square}} \chi_{P}(f)
&= \sum_{l=0}^{[\frac{g}2]} q^{-2ls} \sum_{L\in\A_{l}^+} \sum_{P\in\mb H_{g}} \chi_{P}(L^2)
= \sum_{l=0}^{[\frac{g}2]} q^{(1-2s)l} \sum_{P\in\mb H_{g}} 1  \nonumber \\
&= A_{g}(s) \frac{|P|}{\log_{q}|P|} + O\left(A_{g}(s) \frac{|P|^{\frac{1}2}}{\log_{q}|P|}\right).
\end{align}
Since $A_{g}(s) \ll g$, the error term in \eqref{Contribution-Square-001-1} is $\ll |P|^{\frac{1}2}$.
Hence, we get the result.
The proofs of (2), (3) and (4) are similar as that of (1).
\end{proof}

Now, we consider the contribution of non-squares.
For any non-constant monic polynomial $f$, which is not perfect square, we can reformulate Proposition \ref{bound} as follow:
\begin{equation}\label{bound-2}
\Bigg|\sum_{P\in\mb H_{g}} \chi_{P}(f)\Bigg| \ll \deg(f) \frac{|P|^{\frac{1}2}}{\log_{q}|P|}.
\end{equation}

\begin{prop}\label{C-non-S-001}
\begin{enumerate}
\item
For $s \in \CC$ with ${\rm Re}(s)\ge \frac12$, we have
\begin{align}
\sum_{P\in\mb H_{g}}\sum_{n=0}^{g} (\pm 1)^{n} q^{-ns} \sum_{\substack{f\in\A_{n}^+\\ f\ne\square}} \chi_{P}(f)
= \begin{cases}
O(|P|^{1-\frac{s}2}) & \text{ if ${\rm Re}(s) < 1$}, \vspace{0.1em}\\
O(|P|^{\frac{1}2}(\log_{q}|P|)) & \text{ if ${\rm Re}(s) \ge 1$.}
\end{cases}
\end{align}
\item
For $s \in \CC$ with ${\rm Re}(s)\ge \frac12$, we have
\begin{align}
q^{(1-2s)g} \sum_{P\in\mb H_{g}}\sum_{n=0}^{g-1} (\pm 1)^{n} q^{(s-1)n} \sum_{\substack{f\in\A_{n}^+\\ f\ne\square}} \chi_{P}(f)
= O\left(|P|^{1-\frac{s}2}\right).
\end{align}
\item
Let $h\in\{g-1, g\}$.
For $s \in \CC$ with ${\rm Re}(s)\ge \frac12$, we have
\begin{align}
q^{-(h+1)s} \sum_{P\in\mb H_{g}}\sum_{n=0}^{h} \sum_{\substack{f\in\A_{n}^+\\ f\ne\square}} \chi_{P}(f)
= O\left(|P|^{1-\frac{s}2}\right).
\end{align}
\item
For $s \in \CC$ with ${\rm Re}(s)\ge \frac12$, we have
\begin{align}
\zeta_{\A}(2)^{-1} q^{-g} \sum_{P\in\mb P_{2g+2}}\sum_{n=0}^{g-1} (g-n) \sum_{\substack{f\in\A_{n}^+\\ f\ne\square}} \chi_{P}(f)
= O\left(|P|^{\frac{1}2}\right).
\end{align}
\end{enumerate}
\end{prop}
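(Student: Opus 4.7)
The proof follows a common template for all four parts: interchange the order of summation so the sum over primes $P$ sits innermost, apply the Weil-type bound \eqref{bound-2} to estimate $\big|\sum_{P\in\mb H_{g}}\chi_{P}(f)\big|$ for non-square $f$, and then control the resulting weighted sum over $n$ by recognizing it as a (twisted) geometric-type series.  Since the $(\pm 1)^{n}$ sign factors can be absorbed into absolute values at no cost, and since $\#\A_{n}^{+}=q^{n}$ with $\deg(f)=n$ for $f\in\A_{n}^{+}$, the key estimate after interchanging and applying \eqref{bound-2} is
\begin{align*}
\sum_{\substack{f\in\A_{n}^{+}\\ f\ne\square}}\Bigl|\sum_{P\in\mb H_{g}}\chi_{P}(f)\Bigr|\ \ll\ n\,q^{n}\,\frac{|P|^{\frac{1}{2}}}{\log_{q}|P|}.
\end{align*}
Everything else becomes bookkeeping in $n$, using $|P|\asymp q^{2g}$ and $\log_{q}|P|\asymp g$.

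For part (1), after this reduction the remaining sum is $\frac{|P|^{1/2}}{\log_{q}|P|}\sum_{n=0}^{g}n\,q^{n(1-\mathrm{Re}(s))}$.  If $\mathrm{Re}(s)<1$ the last term dominates, giving a contribution of order $g\,q^{g(1-\mathrm{Re}(s))}\cdot|P|^{1/2}/\log_{q}|P|\asymp|P|^{1-\mathrm{Re}(s)/2}$; if $\mathrm{Re}(s)\ge1$ the sum is $O(g^{2})$, yielding $|P|^{1/2}\log_{q}|P|$.  Parts (2) and (3) follow analogously: in (2) the outer factor $q^{(1-2s)g}$ combines with $\sum_{n<g}n\,q^{n\mathrm{Re}(s)}\ll g\,q^{g\mathrm{Re}(s)}$ to produce $|P|^{1/2}\cdot q^{g(1-\mathrm{Re}(s))}\asymp|P|^{1-\mathrm{Re}(s)/2}$; in (3) the factor $q^{-(h+1)s}$ (with $h\asymp g$) balances against $\sum_{n\le h}n\,q^{n}\ll g\,q^{g}$ to give the same $|P|^{1-\mathrm{Re}(s)/2}$.

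Part (4) is where one must be slightly careful, since a naive estimate $(g-n)\le g$ would yield $|P|^{1/2}\log_{q}|P|$ rather than the claimed $|P|^{1/2}$.  The improvement comes from rewriting
\begin{align*}
\sum_{n=0}^{g-1}(g-n)\,n\,q^{n}\ =\ q^{g}\sum_{m=1}^{g}m(g-m)\,q^{-m}\ \le\ g\,q^{g}\sum_{m=1}^{\infty}m\,q^{-m}\ \ll\ g\,q^{g},
\end{align*}
using convergence of $\sum m\,q^{-m}$.  Combining with the external $q^{-g}$ and the factor $|P|^{1/2}/\log_{q}|P|$ collapses this to $g\cdot|P|^{1/2}/\log_{q}|P|\asymp|P|^{1/2}$, as required.

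The plan therefore has no serious obstacle: the only subtle point is recognizing in part (4) that the geometric weight $q^{-m}$ kills the linear factor $m$ uniformly in $g$, preventing the loss of a $\log$-factor; everywhere else, the argument reduces to identifying the dominant term of a truncated geometric progression and applying the case split $\mathrm{Re}(s)<1$ versus $\mathrm{Re}(s)\ge1$ in part (1).
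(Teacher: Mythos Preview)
Your proposal is correct and follows essentially the same route as the paper: interchange the sums, apply the Weil bound \eqref{bound-2} to get the factor $n q^{n}\,|P|^{1/2}/\log_{q}|P|$, and then estimate $\sum_{n} n\,q^{\alpha n}$ according to whether the exponent is positive, zero, or negative. The paper carries this out only for part~(1) and dismisses (2)--(4) as ``similar''; your explicit handling of part~(4), where the substitution $m=g-n$ and the convergence of $\sum m q^{-m}$ recover the sharp $O(|P|^{1/2})$ instead of the naive $O(|P|^{1/2}\log_{q}|P|)$, is a genuine detail the paper leaves to the reader.
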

\begin{proof}
(1)
By \eqref{bound-2}, we have
\begin{align}
\sum_{P\in\mb H_{g}}\sum_{n=0}^{g} (\pm 1)^{n} q^{-ns} \sum_{\substack{f\in\A_{n}^+\\ f\ne\square}} \chi_{P}(f)
&\ll \sum_{n=0}^{g} q^{-ns} \sum_{\substack{f\in\A_{n}^+\\ f\ne\square}} \left|\sum_{P\in\mb H_{g}} \chi_{P}(f)\right| \\
&\ll \frac{|P|^{\frac{1}2}}{\log_{q}|P|} \sum_{n=0}^{g} n q^{(1-s)n}.
\end{align}
Since
\begin{align}
\sum_{n=0}^{g} n q^{(1-s)n} \ll \begin{cases}
g q^{(1-s)g} & \text{ if ${\rm Re}(s)<1$,} \\
g^2 & \text{ if ${\rm Re}(s) \ge 1$,}
\end{cases}
\end{align}
we have
\begin{align}
\frac{|P|^{\frac{1}2}}{\log_{q}|P|} \sum_{n=0}^{g} n q^{(1-s)n} \ll \begin{cases}
|P|^{1-\frac{s}2} & \text{ if ${\rm Re}(s) < 1$}, \\
|P|^{\frac{1}2}(\log_{q}|P|) & \text{ if ${\rm Re}(s) \ge 1$.}
\end{cases}
\end{align}
The proofs of (2), (3) and (4) are similar as that of (1).
\end{proof}

\subsubsection{\bf Proof of Theorem \ref{F-M-Theorem} (1)}\label{Odd-F-M-SS3}
By Lemma \ref{A-FE} (1), we have
\begin{align}
\sum_{P\in\mb P_{2g+1}} L(s,\chi_{P}) = \sum_{P\in\mb P_{2g+1}} \sum_{n=0}^{g} q^{-sn} \sum_{f\in\A_{n}^{+}} \chi_{P}(f)
+ q^{(1-2s)g} \sum_{P\in\mb P_{2g+1}} \sum_{n=0}^{g-1} q^{(s-1)n} \sum_{f\in\A_{n}^{+}} \chi_{P}(f).
\end{align}
We can write $\sum_{f\in\A_{n}^{+}} \chi_{P}(f)$ as
\begin{align}
\sum_{f\in\A_{n}^{+}} \chi_{P}(f)
= \sum_{\substack{f\in\A_{n}^{+}\\f=\square}} \chi_{P}(f) + \sum_{\substack{f\in\A_{n}^{+}\\f\ne\square}} \chi_{P}(f).
\end{align}
Then, by Propositions \ref{Contribution-Square-001} (1), (2) and \ref{C-non-S-001} (1), (2), we have
\begin{align}\label{F-M-R-004}
\sum_{P\in\mb P_{2g+1}} L(s,\chi_{P}) = \left(A_{g}(s) + B_{g}(s)\right) \frac{|P|}{\log_{q}|P|}
+ \begin{cases}
O(|P|^{1-\frac{s}2}) & \text{ if ${\rm Re}(s) < 1$}, \vspace{0.1em}\\
O(|P|^{\frac{1}2}(\log_{q}|P|)) & \text{ if ${\rm Re}(s) \ge 1$.}
\end{cases}
\end{align}
A simple computation shows that
\begin{align}\label{F-M-R-005}
A_{g}(s) + B_{g}(s) = \begin{cases}
g + 1 & \text{ if $s = \frac{1}2$,} \\
\zeta_{\A}(2s) (1- q^{(1+g)(1-2s)}) & \text{ if $s \ne \frac{1}2$.}
\end{cases}
\end{align}
For ${\rm Re(s)} \ge 1$, we have
\begin{align}\label{F-M-R-006}
\zeta_{\A}(2s) \left(1- q^{(1+g)(1-2s)}\right) \frac{|P|}{\log_{q}|P|}
= \zeta_{\A}(2s) \frac{|P|}{\log_{q}|P|} + O\left(|P|^{\frac{1}2}(\log_{q}|P|)\right).
\end{align}
Then, by \eqref{F-M-R-004}, \eqref{F-M-R-005} and \eqref{F-M-R-006}, we have
\begin{align}
\sum_{P\in\mb P_{2g+1}} L(s,\chi_{P}) = I_{g}(s) \frac{|P|}{\log_{q}|P|}
+ \begin{cases}
O(|P|^{1-\frac{s}2}) & \text{ if ${\rm Re}(s) < 1$}, \vspace{0.1em}\\
O(|P|^{\frac{1}2}(\log_{q}|P|)) & \text{ if ${\rm Re}(s) \ge 1$.}
\end{cases}
\end{align}
This completes the proof of Theorem \ref{F-M-Theorem} (1). \hfill\fbox

\subsubsection{\bf Proof of Theorem \ref{F-M-Theorem} (2)}\label{Odd-F-M-SS4}
By Lemma \ref{A-FE} (2), we can write
\begin{align}
\sum_{P\in\mb P_{2g+2}} L(s,\chi_{P})
&= \sum_{P\in\mb P_{2g+2}} \sum_{n=0}^{g} q^{-sn} \sum_{f\in\A_{n}^{+}} \chi_{P}(f)  \nonumber \\
&\hspace{2em}- q^{-(g+1)s}\sum_{P\in\mb P_{2g+2}} \sum_{n=0}^{g} \sum_{f\in\A_{n}^{+}} \chi_{P}(f) + \sum_{P\in\mb P_{2g+2}} H_{P}(s),
\end{align}
where
\begin{align}
H_{P}(1) = \zeta_{\A}(2)^{-1} q^{-g}\dis\sum_{n=0}^{g-1} \left(g-n\right) \sum_{f\in\A_{n}^{+}} \chi_{P}(f)
\end{align}
and, for $s\ne 1$,
\begin{align}
H_{P}(s) = \eta(s) q^{(1-2s)g} \sum_{n=0}^{g-1} q^{(s-1)n} \sum_{f\in\A_{n}^{+}} \chi_{P}(f)
- \eta(s) q^{-s g} \sum_{n=0}^{g-1} \sum_{f\in\A_{n}^{+}} \chi_{P}(f)
\end{align}
with $\eta(s) = \frac{\zeta_{\A}(2-s)}{\zeta_{\A}(1+s)}$.
We first consider the case $s=1$.
By Propositions \ref{Contribution-Square-001} (1), (2), (4) and \ref{C-non-S-001} (1), (2), (4), we have
\begin{align}\label{F-M-Real-005}
\sum_{P\in\mb P_{2g+2}} L(1,\chi_{P}) = H(g) \frac{|P|}{\log_{q}|P|} + O\left(|P|^{\frac{1}2}(\log_{q}|P|)\right),
\end{align}
where $H(g) = A_{g}(1) - C_{g}(1) + B(g) |P|^{-\frac{1}2}$.
Since
\begin{align}
C_{g}(1) \frac{|P|}{\log_{q}|P|}
= \zeta_{\A}(2) q^{[\frac{g}2]-(g+1)} \frac{|P|}{\log_{q}|P|} + O\left(|P|^{\frac{1}2}(\log_{q}|P|)\right)
\end{align}
and
\begin{align}
B(g) \frac{|P|^{\frac{1}2}}{\log_{q}|P|}
= q^{([\frac{g-1}2]+1)} \left\{2\zeta_{\A}(2)- \tfrac{1}{2}(1+(-1)^{g+1})\right\} \frac{|P|^{\frac{1}2}}{\log_{q}|P|}
+ O\left(|P|^{\frac{1}2}(\log_{q}|P|)\right),
\end{align}
we have
\begin{equation}\label{F-M-Real-007}
H(g) \frac{|P|}{\log_{q}|P|} = J_{g}(1) \frac{|P|}{\log_{q}|P|} + O\left(|P|^{\frac{1}2}(\log_{q}|P|)\right),
\end{equation}
where
\begin{align}
J_{g}(1) := A_{g}(1) - \zeta_{\A}(2) q^{[\frac{g}2]-(g+1)} + q^{([\frac{g-1}2]-g)} \left\{2\zeta_{\A}(2)- \tfrac{1}{2}(1+(-1)^{g+1})\right\}.
\end{align}
A simple computation shows that $J_{g}(1) = \zeta_{\A}(2)$.
Hence, by \eqref{F-M-Real-005} and \eqref{F-M-Real-007}, we have
\begin{align}
\sum_{P\in\mb P_{2g+2}} L(1,\chi_{P})  = \zeta_{\A}(2) \frac{|P|}{\log_{q}|P|} + O\left(|P|^{\frac{1}2}(\log_{q}|P|)\right).
\end{align}

Now, consider the case $s\ne 1$.
For $|s-1|>\varepsilon$, $\eta(s)$ is bounded.
Then, by Propositions \ref{Contribution-Square-001} and \ref{C-non-S-001}, we have
\begin{align}\label{F-M-Real-010}
\sum_{P\in\mb P_{2g+2}} L(s,\chi_{P}) = H_{g}(s) \frac{|P|}{\log_{q}|P|}
+ \begin{cases}
O(|P|^{1-\frac{s}2}) & \text{ if ${\rm Re}(s) < 1$}, \vspace{0.1em} \\
O(|P|^{\frac{1}2}(\log_{q}|P|)) & \text{ if ${\rm Re}(s) \ge 1$,}
\end{cases}
\end{align}
where $H_{g}(s) = A_{g}(s) + \eta(s) B_{g}(s) - C_{g}(s) - \eta(s) C_{g-1}(s)$.
We have that, for $s\in\mb C$ with $\frac{1}2 \le {\rm Re}(s) < 1$ and $|s-1|>\varepsilon$,
\begin{align}\label{F-M-Real-011}
C_{g}(s) \frac{|P|}{\log_{q}|P|}
&= \zeta_{\A}(2) q^{[\frac{g}2]-(g+1)s} \frac{|P|}{\log_{q}|P|} + O\left(|P|^{1-\frac{s}2}\right),  \\
\eta(s) C_{g-1}(s) \frac{|P|}{\log_{q}|P|}
&= \zeta_{\A}(2) \eta(s) q^{[\frac{g-1}2]-gs} \frac{|P|}{\log_{q}|P|} + O\left(|P|^{1-\frac{s}2}\right),
\end{align}
and, for $s\in\mb C$ with $1 \le {\rm Re}(s) < \frac{3}2$ and $|s-1|>\varepsilon$,
\begin{align}\label{F-M-Real-012}
\eta(s) B_{g}(s) \frac{|P|}{\log_{q}|P|} &= \eta(s) \zeta_{\A}(2s) q^{(1-2s)(g-[\frac{g-1}2])} \frac{|P|}{\log_{q}|P|}
+ O\left(|P|^{\frac{1}2}(\log_{q}|P|)\right),  \\
C_{g}(s) \frac{|P|}{\log_{q}|P|}
&= \zeta_{\A}(2) q^{[\frac{g}2]-(g+1)s} \frac{|P|}{\log_{q}|P|} + O\left(|P|^{\frac{1}2}(\log_{q}|P|)\right),  \\
\eta(s) C_{g-1}(s) \frac{|P|}{\log_{q}|P|}
&= \zeta_{\A}(2) \eta(s) q^{[\frac{g-1}2]-gs} \frac{|P|}{\log_{q}|P|} + O\left(|P|^{\frac{1}2}(\log_{q}|P|)\right),
\end{align}
and, for $s\in\mb C$ with $\frac{3}2 \le {\rm Re}(s)$,
\begin{align}\label{F-M-Real-013}
A_{g}(s) \frac{|P|}{\log_{q}|P|} &= \zeta_{\A}(2s) \frac{|P|}{\log_{q}|P|} + O\left(|P|^{\frac{1}2}(\log_{q}|P|)\right),  \\
\eta(s) B_{g}(s) \frac{|P|}{\log_{q}|P|} &\ll |P|^{\frac{1}2} (\log_{q}|P|),  \\
C_{g}(s) \frac{|P|}{\log_{q}|P|} &\ll |P|^{\frac{1}2}(\log_{q}|P|),  \\
\eta(s) C_{g-1}(s) \frac{|P|}{\log_{q}|P|} &\ll |P|^{\frac{1}2}(\log_{q}|P|).
\end{align}
Then, by \eqref{F-M-Real-011}, \eqref{F-M-Real-012} and \eqref{F-M-Real-013}, we have
\begin{align}\label{F-M-Real-014}
H_{g}(s) \frac{|P|}{\log_{q}|P|} = J_{g}(s) \frac{|P|}{\log_{q}|P|} + \begin{cases}
O(|P|^{1-\frac{s}2}) & \text{ if ${\rm Re}(s) < 1$}, \vspace{0.1em} \\
O(|P|^{\frac{1}2}(\log_{q}|P|)) & \text{ if ${\rm Re}(s) \ge 1$.}
\end{cases}
\end{align}
Hence, by \eqref{F-M-Real-010} and \eqref{F-M-Real-014}, we have
\begin{align}
\sum_{P\in\mb P_{2g+2}} L(s,\chi_{P}) = J_{g}(s) \frac{|P|}{\log_{q}|P|}
+ \begin{cases}
O(|P|^{1-\frac{s}2}), & \text{ if ${\rm Re}(s) < 1$} \vspace{0.1em} \\
O(|P|^{\frac{1}2}(\log_{q}|P|)) & \text{ if ${\rm Re}(s) \ge 1$}
\end{cases}
\end{align}
for $s\in\mb C$ with $\frac{1}2 \le {\rm Re}(s)$ and $|s-1|>\varepsilon$.
This completes the proof of Theorem \ref{F-M-Theorem} (2). \hfill\fbox

\subsubsection{\bf Proof of Theorem \ref{F-M-Theorem} (3)}\label{Odd-F-M-SS5}
For any $f\in\A^{+}$, we have $\chi_{\gamma P}(f) = (-1)^{\deg(f)} \chi_{P}(f)$.
Hence, by Lemma \ref{A-FE} (3), we have
\begin{align}
\sum_{P\in\mb P_{2g+2}} L(s,\chi_{\gamma P})
&= \sum_{P\in\mb P_{2g+2}} \sum_{n=0}^{g} (-1)^{n} q^{-sn} \sum_{f\in\A_{n}^{+}} \chi_{P}(f)
+ (-1)^{g} q^{-(g+1)s} \sum_{P\in\mb P_{2g+2}} \sum_{n=0}^{g}\sum_{f\in\A_{n}^{+}} \chi_{P}(f) \nonumber \\
&\quad + \nu(s) q^{(1-2s)g} \sum_{P\in\mb P_{2g+2}} \sum_{n=0}^{g-1} (-1)^{n} q^{(s-1)n} \sum_{f\in\A_{n}^{+}} \chi_{P}(f)  \nonumber \\
&\quad + (-1)^{g+1}\nu(s) q^{-sg} \sum_{P\in\mb P_{2g+2}} \sum_{n=0}^{g-1} \sum_{f\in\A_{n}^{+}} \chi_{P}(f),
\end{align}
where $\nu(s) = \frac{1+q^{-s}}{1+q^{s-1}}$.
Following the same process as in the proof of Theorem \ref{F-M-Theorem} (2), we can show that
\begin{align}
\sum_{P\in\mb P_{2g+2}} L(s,\chi_{\gamma P}) = K_{g}(s) \frac{|P|}{\log_{q}|P|} + \begin{cases}
O(|P|^{1-\frac{s}2}) & \text{ if ${\rm Re}(s) < 1$}, \vspace{0.1em}\\
O(|P|^{\frac{1}2} (\log_{q}|P|)) & \text{ if ${\rm Re}(s) \ge 1$.}
\end{cases}
\end{align}
This completes the proof of Theorem \ref{F-M-Theorem} (3). \hfill\fbox

\subsection{Even characteristic case}
In this subsection, we give a proof of Theorem \ref{E-F-M-Theorem}.
In \S\ref{Even-F-M-SS1}, we obtain several results of the contribution of squares and of non-squares,
which will be used to calculate the first moment of $L$-functions
in \S\ref{Even-F-M-SS3}, \S\ref{Even-F-M-SS4} and \S\ref{Even-F-M-SS5}.

\subsubsection{\bf Auxiliary Lemmas}
\begin{lem}\label{E-PNT}
\begin{enumerate}
\item
For any positive integer $n$, we have
\begin{align}\label{E-PNT-2-1}
\sum_{P\in\mb P_{n}}\sum_{u\in\mc F_{P}} 1 = \frac{|P|^{2}}{\log_{q}|P|} + O\left(\frac{|P|^{\frac{3}{2}}}{\log_{q}|P|}\right).
\end{align}
\item
For any positive integer $n$, we have
\begin{align}\label{E-PNT-2-2}
\#\mc H_{n+1} = 2 \frac{q^{2n+1}}{n}+ O\left(\frac{q^{2n}}{n^2}\right).
\end{align}
\end{enumerate}
\end{lem}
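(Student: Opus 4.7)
\medskip

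\noindent\textbf{Proof plan for Lemma \ref{E-PNT}.}

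For part (1), the plan is simply to count directly. By the definition of $\mc F_P$ after \eqref{normalization-2}, an element of $\mc F_P$ has the form $u=A/P$ with $0\ne A\in\A$ and $\deg(A)<\deg(P)=n$, so $\#\mc F_P=q^n-1=|P|-1$. Thus
\begin{equation*}
\sum_{P\in\mb P_n}\sum_{u\in\mc F_P}1=(|P|-1)\cdot\#\mb P_n,
\end{equation*}
and inserting the Prime Polynomial Theorem in the form $\#\mb P_n=|P|/\log_q|P|+O(|P|^{1/2}/\log_q|P|)$ yields \eqref{E-PNT-2-1} after multiplying out and discarding lower-order terms.

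For part (2), the plan is to use the disjoint decomposition $\mc H_{n+1}=\bigsqcup_{r=1}^{n}\mc H_{(r,n+1-r)}=\bigsqcup_{r=1}^{n}(\mc F_r+\mc G_{n+1-r})$. Because the rational parts (elements of $\mc F$) and the polynomial parts (elements of $\mc G$) occupy disjoint coefficient ``slots'' in the normalization \eqref{normalization-2}, the sum $\mc F_r+\mc G_{n+1-r}$ is a direct sum, so
\begin{equation*}
\#\mc H_{(r,s)}=\#\mc F_r\cdot\#\mc G_s.
\end{equation*}
A direct count of the admissible $(\alpha,\alpha_1,\dots,\alpha_s)$ with $\alpha\in\{0,\xi\}$ and $\alpha_s\ne 0$ gives $\#\mc G_s=2(q-1)q^{s-1}$, and part (1) (summed over $P\in\mb P_r$) gives $\#\mc F_r=(q^r-1)\#\mb P_r$. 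Substituting $\#\mb P_r=q^r/r+O(q^{r/2}/r)$ then produces
\begin{equation*}
\#\mc H_{n+1}=2(q-1)q^{n}\sum_{r=1}^{n}\frac{q^{r}}{r}+\textrm{(lower order)}.
\end{equation*}

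The remaining step, and the only one that requires any real care, is to evaluate $\sum_{r=1}^{n}q^{r}/r$ with enough precision. The plan is to write $q^r/r=q^r/n+q^r(n-r)/(rn)$; the first piece gives $\frac{1}{n}\sum_{r=1}^{n}q^{r}=\frac{q^{n+1}}{n(q-1)}+O(q/n)$ exactly, and the second piece, after the substitution $k=n-r$, becomes $\frac{1}{n}\sum_{k=0}^{n-1}q^{n-k}\frac{k}{n-k}$. Splitting this tail at $k=n/2$ bounds the small-$k$ range by $\tfrac{2}{n}q^{n}\sum_{k\geq 0}kq^{-k}=O(q^{n}/n)$ and the large-$k$ range by $O(q^{n/2}n\log n)$, so the total is $O(q^{n}/n)$, and hence dividing by $n$ gives $O(q^{n}/n^{2})$. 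Combining,
\begin{equation*}
\sum_{r=1}^{n}\frac{q^{r}}{r}=\frac{q^{n+1}}{n(q-1)}+O\!\left(\frac{q^{n}}{n^{2}}\right),
\end{equation*}
and multiplying through by $2(q-1)q^{n}$ and absorbing the remaining error terms ($2(q-1)q^{n}\sum_{r}1/r=O(q^{n}\log n)$ and $O(q^{3n/2}/n)$ from the PPT remainder) yields \eqref{E-PNT-2-2}. The main obstacle is precisely this asymptotic for $\sum_{r=1}^{n}q^{r}/r$: one must resist the temptation to keep only the $r=n$ term (whose coefficient $1$ is \emph{not} the correct leading constant $q/(q-1)$) and instead sum the geometric tail contributed by all $r$ near $n$.
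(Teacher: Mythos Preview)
Your proposal is correct and follows the same overall structure as the paper's proof: for part~(1) both count $\#\mc F_P=|P|-1$ and apply the Prime Polynomial Theorem, and for part~(2) both use the decomposition $\#\mc H_{n+1}=\sum_{r=1}^{n}\#\mc F_r\cdot\#\mc G_{n+1-r}$ with $\#\mc G_s=2(q-1)q^{s-1}=2\zeta_{\A}(2)^{-1}q^{s}$.

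The one noteworthy difference is in how the key sum is handled. The paper writes $\#\mc H_{n+1}=2\zeta_{\A}(2)^{-1}q^{n+1}\sum_{r=1}^{n}(1-q^{-r})\#\mb P_r$, bounds the $q^{-r}\#\mb P_r$ contribution trivially by $\sum_{r\le n}1/r\ll\log n$, and then invokes a result of Pollack \cite[Theorem~2]{Po10} for the remaining sum
\[
\sum_{r=1}^{n}\#\mb P_r=\frac{q}{q-1}\cdot\frac{q^{n}}{n}+O\!\left(\frac{q^{n}}{n^{2}}\right).
\]
You instead substitute $\#\mb P_r=q^r/r+O(q^{r/2}/r)$ first and then establish the equivalent asymptotic $\sum_{r=1}^{n}q^{r}/r=\frac{q^{n+1}}{n(q-1)}+O(q^{n}/n^{2})$ from scratch via the identity $q^r/r=q^r/n+q^r(n-r)/(rn)$ and a tail split. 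Your argument is self-contained and avoids the external citation; the paper's is shorter because it outsources exactly this step. Both yield the same bound, and your remark that one must not truncate to the single term $r=n$ (which would give the wrong leading constant) is precisely the point encapsulated in Pollack's formula.
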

\begin{proof}
(1) By Theorem \ref{thm:pnt}, we have
\begin{align}
\sum_{P\in\mb P_{n}}\sum_{u\in\mc F_{P}} 1 = \sum_{P\in\mb P_{n}}\sum_{\substack{0\ne A \in \A^{+} \\ \deg(A) < n}} 1
&= (|P|-1) \frac{|P|}{\log_{q}|P|} + O\left(\frac{|P|^{\frac{3}2}}{\log_{q}|P|}\right) \\
&= \frac{|P|^{2}}{\log_{q}|P|} + O\left(\frac{|P|^{\frac{3}{2}}}{\log_{q}|P|}\right).
\end{align}
(2) Since $\#\mc G_{n+1-r} = 2 \zeta_{\A}(2)^{-1} q^{n+1-r}$, by \eqref{E-PNT-2-1}, we have
\begin{align}
\#\mc H_{(r,n+1-r)} = \#\mc F_{r} \cdot \#\mc G_{n+1-r} = 2 \zeta_{\A}(2)^{-1} q^{n+1-r} (q^{r}-1) \#\mb P_{r}
\end{align}
and
\begin{align}
\#\mc H_{n+1} = \sum_{r=1}^{n} \#\mc H_{(r,n+1-r)} = 2 \zeta_{\A}(2)^{-1} q^{n+1} \sum_{r=1}^{n} (1-q^{-r}) \#\mb P_{r}.
\end{align}
From \cite[Theorem 2]{Po10}, we can deduce that
\begin{align}
\sum_{r=1}^{n} \#\mb P_{r} = \frac{q}{q-1} \frac{q^{n}}{n} + O\left(\frac{q^{n}}{n^2}\right).
\end{align}
Also, since $\#\mb P_{r} \le \frac{q^r}{r}$, we have
\begin{align}
\sum_{r=1}^{n} q^{-r} \#\mb P_{r} \le \sum_{r=1}^{n} \frac{1}{r} \ll \log n.
\end{align}
Hence, we get the result.
\end{proof}

For $P\in\mb P$ and $f\in\A^{+}$, let $\Gamma_{f, P}$ and $T_{f,P}$ be defined by (see \cite[\S3]{Ch08})
\begin{align}
\Gamma_{f, P} = \sum_{\substack{A\in\A \\ \deg(A) < \deg(P)}} \left\{\frac{A/P}{f}\right\} \quad\text{ and }\quad
T_{f, P} = \sum_{\substack{0 \ne A\in\A \\ \deg(A) < \deg(P)}} \left\{\frac{A/P}{f}\right\}.
\end{align}

\begin{lem}\label{E-BOUND-2-1}
For $f\in\A^{+}$ with $\deg(f) \le 2g+1$, which is not a perfect square, we have
\begin{align}
\left|\sum_{P\in\mb P_{g+1}}\sum_{u\in\mc F_{P}} \chi_{u}(f) \right| \ll \frac{|P|}{\log_{q}|P|}.
\end{align}
\end{lem}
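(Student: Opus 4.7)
The plan is to exploit the $\mathbb{F}_{2}$-linearity of the Hasse symbol $A \mapsto [A/P, f)$ in its first variable, which turns the inner sum into an additive character sum taking only two possible values, and then to show that the ``trivial character'' alternative never occurs.

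First, I would rewrite the inner sum in the notation introduced just before the lemma: $\sum_{u \in \mc F_{P}} \chi_{u}(f) = T_{f,P}$. Since $\{u/h^{2}\} = 1$ whenever $u$ is prime to $h$ (because $[u, h^{2}) = 2[u,h) \equiv 0$ in $\mathbb{F}_{2}$), the symbol $\chi_{u}(f)$ depends only on the squarefree kernel $f_{0}$ of $f$, which is nontrivial by the hypothesis that $f$ is not a perfect square. I may therefore assume $f$ is squarefree with $f \ne 1$. Since $\deg f \le 2g+1 < 2(g+1)$, at most one prime in $\mb P_{g+1}$ divides $f$, and any such prime contributes $T_{f,P} = 0$ (as $\chi_{u}(f)$ vanishes when $P$ divides the denominator of $u$).

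Second, for each remaining $P \in \mb P_{g+1}$ with $P \nmid f$, the map $\phi_{P,f}(A) := [A/P, f)$ is $\mathbb{F}_{2}$-linear in $A$ on the $\mathbb{F}_{q}$-vector space $S_{P} := \{A \in \A : \deg A < g+1\}$. By additive character orthogonality,
\[
\Gamma_{f,P} = \sum_{A \in S_{P}} (-1)^{\phi_{P,f}(A)} = \begin{cases} |P| & \text{if } \phi_{P,f}|_{S_{P}} \equiv 0, \\ 0 & \text{otherwise.} \end{cases}
\]
Hence $T_{f,P} = \Gamma_{f,P} - 1 \in \{|P|-1,\,-1\}$. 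Letting $N_{f,g}$ denote the number of $P \in \mb P_{g+1}$ (not dividing $f$) for which the trivial alternative occurs, I obtain
\[
\sum_{P \in \mb P_{g+1}} T_{f,P} = N_{f,g}\,|P| - \#\mb P_{g+1} + O(1),
\]
so by the Prime Polynomial Theorem it suffices to show $N_{f,g} = 0$.

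Third, I would prove $N_{f,g} = 0$ by cases. Writing $\phi_{P,f}(A) = \psi(A P^{-1} \bmod f)$, where $\psi : \A/f \to \mathbb{F}_{2}$ is the $\mathbb{F}_{2}$-linear map $\psi(B) := \sum_{Q \mid f} \chi_{Q}(B \bmod Q)$ with $\chi_{Q}$ the Artin-Schreier quotient, the vanishing of $\phi_{P,f}$ on $S_{P}$ is equivalent to $P^{-1} V \subseteq \ker \psi$, where $V \subseteq \A/f$ is the image of $S_{P}$. In the easy regime $\deg f \le g+1$ the reduction $S_{P} \to \A/f$ is surjective, so triviality on $S_{P}$ would force $\psi \equiv 0$ on all of $\A/f$, contradicting the surjectivity of each $\chi_{Q}$. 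In the harder regime $g+1 < \deg f \le 2g+1$, I would invoke the product formula for Hasse symbols $[A/P, f) = [A/P, f)_{\infty} + [A/P, f)_{P}$ (valid because the local symbol vanishes at primes where both arguments are units) to recast the vanishing as a pair of local conditions at $\infty$ and $P$, and use the explicit local structure of $k_{\infty}$ and $k_{P}$ together with the degree constraint $\deg A < g+1$ to rule out simultaneous vanishing for all $A \in S_{P} \setminus \{0\}$.

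\textbf{Main obstacle.} The crux is the harder regime $\deg f > g+1$: translating the vanishing of $\phi_{P,f}$ on $S_{P}$ into constraints at the local places $\infty$ and $P$, and then showing those constraints admit no $P \in \mb P_{g+1}$. This requires the even-characteristic Hasse symbol machinery developed in Chen \cite{Ch08}, together with careful bookkeeping of how the degree restriction $\deg A < g+1$ constrains the local behavior of $A/P$ at the infinite place.
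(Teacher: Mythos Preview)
Your overall approach matches the paper's exactly: rewrite the double sum as $\sum_{P} T_{f,P}$, observe $T_{f,P} = \Gamma_{f,P} - 1$ (since $\{\tfrac{0}{f}\}=1$), and reduce to showing $\Gamma_{f,P} = 0$ whenever $P \nmid f$, after which the bound $\#\mb P_{g+1}\ll |P|/\log_q|P|$ follows from the Prime Polynomial Theorem. The paper does not argue the vanishing $\Gamma_{f,P}=0$ from scratch; it simply quotes \cite[Lemma~3.1]{Ch08}, so its entire proof is three lines. Your dichotomy $\Gamma_{f,P}\in\{0,|P|\}$ via additivity of the Hasse symbol in the first variable is correct, and your ``easy regime'' argument ($\deg f \le g+1$, so the reduction $S_P \to \A/f$ is onto and the induced $\psi$ is visibly nontrivial) is a valid self-contained proof of half of Chen's lemma.

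The gap is the ``harder regime'' $g+1 < \deg f \le 2g+1$. What you have there is only a plan. The identity you call the product formula, $[A/P,f)=[A/P,f)_\infty+[A/P,f)_P$, is not a notation defined in the paper, and as written it is not obviously the correct reciprocity statement: the global Hasse symbol $[A/P,f)$ is already defined as a sum over primes $Q\mid f$, not over $\infty$ and $P$, so some genuine reciprocity law is hiding behind your equation and needs to be stated precisely. More importantly, you do not explain how the two local constraints, once made precise, actually force nontriviality of $\phi_{P,f}$ on $S_P$ for \emph{every} $P\in\mb P_{g+1}$ with $P\nmid f$. Since you yourself flag this step as requiring the machinery of \cite{Ch08}, the cleanest repair is to do what the paper does and invoke \cite[Lemma~3.1]{Ch08} for $\Gamma_{f,P}=0$ outright, dropping the regime split; the reduction to squarefree $f$ and the handling of the at most one $P\mid f$ are then unnecessary as well.
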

\begin{proof}
By \cite[Lemma 3.1]{Ch08}, we have $\Gamma_{f, P} = 0$, so $T_{f,P} = \Gamma_{f, P} - \{\frac{0}{f}\} = -1$.
Then, we have
\begin{align}
\sum_{P\in\mb P_{g+1}}\sum_{u\in\mc F_{P}} \chi_{u}(f)
&= \sum_{P\in\mb P_{g+1}} \sum_{\substack{0 \ne A \in \A \\ \deg(A) < g+1}} \left\{\frac{A/P}{f}\right\} \\
&= \sum_{P\in\mb P_{g+1}} T_{f, P} = - \sum_{\substack{P\in\mb P_{g+1}\\P\nmid f}} 1 \ll \#\mb P_{g+1}.
\end{align}
Hence, by Theorem \ref{thm:pnt}, we get the result.
\end{proof}

For $P\in\mb P, f\in\A^{+}$ and positive integer $s$, let $\Gamma_{f,P,s}$ and $T_{f,P,s}$ be defined by
\begin{align}
\Gamma_{f,P,s} = \sum_{\substack{A\in\A \\ \deg(A) < \deg(P)}} \sum_{F\in\mc G_{s}} \left\{\frac{A/P+F}{f}\right\} ~~\text{ and }~~
T_{f,P,s} = \sum_{\substack{0 \ne A\in\A \\ \deg(A) < \deg(P)}} \sum_{F\in\mc G_{s}} \left\{\frac{A/P+F}{f}\right\}.
\end{align}

\begin{lem}\label{E-BOUND-3}
Let $P\in\mb P, f\in\A^{+}$ and $s$ be a positive integer with $\deg(f) \le 2 \deg(P) + 2s - 2$.
Suppose that $P\nmid f$ and $f$ is not a perfect square.
Then ${\Gamma}_{f, P, s} = 0$ and $T_{f,P,s} \ll q^{s}$.
\end{lem}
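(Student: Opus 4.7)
The plan is to exploit the additivity of the quadratic symbol $\{\,\cdot\,/f\}$ in its first argument, inherited from the additivity of the Hasse symbol $[u,P)$ in $u$. Under the hypothesis $P \nmid f$, the denominator of $A/P$ (which equals $P$ whenever $A\neq 0$) is coprime to $f$, so $\{(A/P)/f\}$ is well defined; and since $F \in \mc G_{s}$ is a polynomial, $\{F/f\}$ is trivially well defined. The first step will therefore be to record the factorization
\[
\left\{\frac{A/P + F}{f}\right\} \;=\; \left\{\frac{A/P}{f}\right\}\cdot\left\{\frac{F}{f}\right\}
\]
for every pair $(A,F)$ appearing in $\Gamma_{f,P,s}$, including $A=0$ (where $\{0/f\}=1$).

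Summing this identity separately over $A \in \A$ with $\deg A < \deg P$ and over $F \in \mc G_{s}$ then separates the two variables and yields
\[
\Gamma_{f,P,s} \;=\; \Gamma_{f,P}\cdot S_{f,s}, \qquad S_{f,s} := \sum_{F \in \mc G_{s}} \left\{\frac{F}{f}\right\}.
\]
The first assertion $\Gamma_{f,P,s} = 0$ will then follow from $\Gamma_{f,P} = 0$, which I intend to invoke from \cite[Lemma 3.1]{Ch08}, exactly as was done in the proof of Lemma \ref{E-BOUND-2-1}.

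For the bound on $T_{f,P,s}$, the idea is to peel off the $A=0$ contribution from $\Gamma_{f,P,s}$:
\[
\Gamma_{f,P,s} \;=\; T_{f,P,s} + \sum_{F \in \mc G_{s}} \left\{\frac{F}{f}\right\} \;=\; T_{f,P,s} + S_{f,s}.
\]
Combining this with $\Gamma_{f,P,s} = 0$ gives $T_{f,P,s} = -S_{f,s}$, and the trivial bound $|S_{f,s}| \le |\mc G_{s}| = 2(q-1)q^{s-1} \ll q^{s}$ delivers the second assertion.

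The principal subtlety is to verify that the input $\Gamma_{f,P} = 0$ is available in the degree range $\deg(f) \le 2\deg(P) + 2s - 2$ permitted by the hypothesis, which exceeds the range $\deg f \le 2\deg P - 1$ used in Lemma \ref{E-BOUND-2-1}. Since the vanishing of $\Gamma_{f,P}$ amounts to non-triviality of the $\bF_{2}$-linear functional $A \mapsto [A/P,f)$ on $\{A \in \A : \deg A < \deg P\}$, a property of the pair $(f,P)$ alone once $f$ is a non-square coprime to $P$, no genuine obstacle should arise; still, this is the step I would want to double-check against the precise formulation of Chen's lemma before committing to the full proof.
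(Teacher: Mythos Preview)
Your approach is correct and essentially the same as the paper's, with a slightly tidier factorization. The paper also uses additivity of the symbol to separate variables, but instead of splitting off all of $F\in\mc G_{s}$ as you do, it groups $A/P$ together with the lower odd monomials $\sum_{n=1}^{s-1}\alpha_{n}T^{2n-1}$ into an additive group $\mc E_{P,s}$ and asserts that $\sum_{u\in\mc E_{P,s}}\{u/f\}=0$; that vanishing is precisely the non-triviality of $\{\cdot/f\}$ on $\mc E_{P,s}$, which already holds on the subgroup $\{A/P:\deg A<\deg P\}$, i.e.\ the paper is using $\Gamma_{f,P}=0$ implicitly where you cite it explicitly. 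The passage from $\Gamma_{f,P,s}=0$ to $T_{f,P,s}=-\sum_{F\in\mc G_{s}}\{F/f\}\ll\#\mc G_{s}\ll q^{s}$ is identical in both arguments. Regarding your degree-range concern: the paper's own proof requires the same input under the same hypothesis $\deg f\le 2\deg P+2s-2$, so whatever form of \cite[Lemma~3.1]{Ch08} the paper relies on suffices equally for your argument; Chen's lemma in fact gives $\Gamma_{f,P}=0$ for every monic non-square $f$ with $P\nmid f$, without a degree restriction.
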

\begin{proof}
Let $\mc E_{P, s}$ be the set of rational functions $u = \frac{A}{P} + F \in k$ such that $\deg(A) < \deg(P)$
and $F = \sum_{n=1}^{s-1} \alpha_{n} T^{2n-1}$ with $\alpha_{n} \in \bF_{q}$.
Then, we have
\begin{align}\label{E-BOUND-3-001}
\Gamma_{f,P,s} = \left(1+\left\{\frac{\gamma}{f}\right\}\right)
\left(\sum_{\alpha\in\bF_{q}^*} \left\{\frac{\alpha T^{2s-1}}{f}\right\} \sum_{u\in\mc E_{P,s}} \left\{\frac{u}{f}\right\}\right),
\end{align}
where $\gamma$ is a generator of $\bF_{q}^{\times}$.
Since $\mc E_{P, s}$ is abelian group and $\{\frac{\cdot}{f}\}$ is a homomorphism, we have
\begin{align}\label{E-BOUND-3-002}
\sum_{u\in\mc E_{P,s}} \left\{\frac{u}{f}\right\} = 0.
\end{align}
From \eqref{E-BOUND-3-001} and \eqref{E-BOUND-3-002}, we have $\Gamma_{f,P,s} = 0$ and
$T_{f,P,s} = - \sum_{F\in\mc G_{s}} \{\frac{F}{f}\} \ll \#\mc G_{s} \ll q^{s}$.
\end{proof}

\begin{lem}\label{E-BOUND-3-1}
For $f\in\A^{+}$ with $\deg(f)< 2g+1$, which is not a perfect square, we have
\begin{align}
\left|\sum_{u\in\mc H_{g+1}} \chi_{u}(f)\right| \ll (\log g) q^{g}.
\end{align}
\end{lem}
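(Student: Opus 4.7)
The plan is to decompose the index set $\mc H_{g+1}$ according to the degree of the denominator, so that the sums over $\mc F_P + \mc G_s$ reduce to the quantities $T_{f,P,s}$ already controlled by Lemma \ref{E-BOUND-3}. Recall that $\mc H_{g+1} = \bigsqcup_{r=1}^{g} \mc H_{(r,g+1-r)}$ and that $\mc H_{(r,g+1-r)} = \bigsqcup_{P\in\mb P_{r}} (\mc F_{P} + \mc G_{g+1-r})$. Therefore
\begin{align}
\sum_{u\in\mc H_{g+1}} \chi_{u}(f) = \sum_{r=1}^{g} \sum_{P\in\mb P_{r}} T_{f, P, g+1-r}.
\end{align}

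Next I would argue that only the primes $P\nmid f$ contribute. Indeed, for $u = A/P + F$ with $0\ne A$, $\deg(A)<\deg(P)$ and $F\in\mc G_{g+1-r}$, the rational function $u$ has denominator exactly $P$ (since $\gcd(A,P)=1$ and $F$ is a polynomial). Thus whenever $P\mid f$, the symbol $\{u/f\}$ is zero by the very definition of $\{\cdot/\cdot\}$, and accordingly $T_{f,P,g+1-r}=0$ for such $P$.

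For the remaining primes $P\in\mb P_r$ with $P\nmid f$, the conditions of Lemma \ref{E-BOUND-3} are met: $f$ is not a perfect square by hypothesis, and the degree bound required there, $\deg(f) \le 2r + 2(g+1-r) - 2 = 2g$, follows from the assumption $\deg(f)<2g+1$. Hence $T_{f,P,g+1-r}\ll q^{g+1-r}$ uniformly in $P$ and $r$.

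Combining these, and using $\#\mb P_{r} \ll q^{r}/r$ from Theorem \ref{thm:pnt}, I estimate
\begin{align}
\left|\sum_{u\in\mc H_{g+1}} \chi_{u}(f)\right|
\ll \sum_{r=1}^{g} \#\mb P_{r} \cdot q^{g+1-r}
\ll \sum_{r=1}^{g} \frac{q^{r}}{r} \cdot q^{g+1-r}
= q^{g+1}\sum_{r=1}^{g}\frac{1}{r}
\ll (\log g)\, q^{g},
\end{align}
where the implied constants depend on $q$, as allowed. The only subtle point to get right is verifying that the vanishing at $P\mid f$ truly removes these terms (so we do not have to use the trivial bound $T_{f,P,s}\ll q^{r+s}$, which would be too large after summing in $r$); everything else is a direct bookkeeping step. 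I expect no serious obstacle beyond this observation about the denominator of $u$.
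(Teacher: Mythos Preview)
Your proof is correct and follows essentially the same route as the paper's: decompose $\mc H_{g+1}$ into the pieces $\mc H_{(r,g+1-r)}$, express the inner sum as $\sum_{P\in\mb P_r} T_{f,P,g+1-r}$, invoke Lemma~\ref{E-BOUND-3} to bound each $T_{f,P,g+1-r}$ by $O(q^{g+1-r})$, and then sum $\#\mb P_r \cdot q^{g+1-r} \ll q^{g+1}/r$ over $r$ to pick up the harmonic factor $\log g$. Your version is slightly more careful than the paper's in that you explicitly check the degree hypothesis $\deg(f)\le 2r+2(g+1-r)-2=2g$ of Lemma~\ref{E-BOUND-3} and separately dispose of the primes $P\mid f$ (where $T_{f,P,g+1-r}=0$ because the denominator of $u$ is exactly $P$), whereas the paper absorbs both points silently into the bound $T_{f,P,g+1-r}\ll q^{g+1-r}$.
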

\begin{proof}
By Lemma \ref{E-BOUND-3}, we have
\begin{align}
\sum_{u\in\mc H_{(r,g+1-r)}} \chi_{u}(f) = \sum_{P\in\mb P_{r}} T_{f, P, g+1-r}
\ll q^{g+1-r} \#\mb P_{r} \ll \frac{q^{g}}{r}.
\end{align}
Hence, we have
\begin{align}
\left|\sum_{u\in\mc H_{g+1}} \chi_{u}(f)\right| = \left|\sum_{r=1}^{g}\sum_{u\in\mc H_{(r,g+1-r)}} \chi_{u}(f)\right|
\ll q^{g} \sum_{r=1}^{g} \frac{1}{r} \ll (\log g) q^{g}.
\end{align}
\end{proof}

\subsubsection{\bf Preparations for the proof}\label{Even-F-M-SS1}
We first consider the contributions of squares.

\begin{prop}\label{E-Contribution-Square-Ramified-001}
\begin{enumerate}
\item
For $s=\frac{1}{2}$, we have
\begin{align}
\sum_{u\in\mc H_{g+1}} \sum_{n=0}^{g} q^{-\frac{n}2} \sum_{\substack{f\in\A_{n}^{+} \\ f = \square}} \chi_{u}(f)
= 2([\tfrac{g}2]+1) \frac{q^{2g+1}}{g} + O\left(\frac{q^{2g}}{g}\right)
\end{align}
and, for $s\in\mb C$ with ${\rm Re}(s) \ge \frac{1}2 ~(s \ne \frac{1}2)$,
\begin{align}
\sum_{u\in\mc H_{g+1}} \sum_{n=0}^{g} q^{-sn} \sum_{\substack{f\in\A_{n}^{+} \\ f = \square}} \chi_{u}(f)
= 2 \zeta_{\A}(2s) \frac{q^{2g+1}}{g} + O\left(\frac{q^{2g}}{g^2}\right).
\end{align}
\item
For $s=\frac{1}{2}$, we have
\begin{align}
\sum_{u\in\mc H_{g+1}} \sum_{n=0}^{g-1} q^{-\frac{n}2} \sum_{\substack{f\in\A_{n}^{+} \\ f = \square}} \chi_{u}(f)
= 2 ([\tfrac{g-1}2]+1) \frac{q^{2g+1}}{g} + O\left(\frac{q^{2g}}{g}\right)
\end{align}
and, for $s\in\mb C$ with ${\rm Re}(s) \ge \frac{1}2 ~(s \ne \frac{1}2)$,
\begin{align}
\sum_{u\in\mc H_{g+1}} \sum_{n=0}^{g-1} q^{(s-1)n} \sum_{\substack{f\in\A_{n}^{+} \\ f = \square}} \chi_{u}(f) = O\left(\frac{q^{2g}}{g^2}\right).
\end{align}
\end{enumerate}
\end{prop}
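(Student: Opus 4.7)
The strategy is to parallel the odd-characteristic analog (Proposition \ref{Contribution-Square-001}) by reducing each sum to a prime-counting problem. Writing every monic square as $f = L^2$ with $L \in \A_l^+$ and $n = 2l$, then interchanging the order of summation, gives
\[
\sum_{u\in\mc H_{g+1}}\sum_{n=0}^{g} q^{-sn}\sum_{\substack{f\in\A_n^+\\ f=\square}}\chi_u(f) = \sum_{l=0}^{[g/2]} q^{-2sl} \sum_{L\in\A_l^+}\sum_{u\in\mc H_{g+1}}\chi_u(L^2),
\]
with the analogous identity for part (2). The key observation is that the $\bF_2$-linearity of the Hasse symbol forces $[u,L^2) = 2[u,L) = 0$, so that if $u = A/P + F \in \mc H_{(r,g+1-r)} = \mc F_r + \mc G_{g+1-r}$ has irreducible denominator $P \in \mb P_r$, then $\chi_u(L^2) = 1$ when $P \nmid L$ and $\chi_u(L^2) = 0$ otherwise.

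Using the disjoint decomposition $\mc H_{g+1} = \bigsqcup_{r=1}^{g}\mc H_{(r,g+1-r)}$, this reduces the inner double sum to the counting identity
\[
\sum_{L\in\A_l^+}\sum_{u\in\mc H_{g+1}}\chi_u(L^2) = q^l\,\#\mc H_{g+1} - q^l\!\!\sum_{r=1}^{\min(l,g)} q^{-r}\,\#\mc H_{(r,g+1-r)}.
\]
The first term is handled via the asymptotic $\#\mc H_{g+1} = 2q^{2g+1}/g + O(q^{2g}/g^2)$ from Lemma \ref{E-PNT}, while the correction is bounded by $\ll q^{g+1}\log(l+1)$ using $\#\mc G_{g+1-r} = 2\zeta_\A(2)^{-1}q^{g+1-r}$ together with the Prime Polynomial Theorem. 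Inserting the weight and summing the geometric series $\sum_{l=0}^{N} q^{(1-2s)l} = \zeta_\A(2s)\bigl(1 - q^{(1-2s)(N+1)}\bigr)$ gives the stated main term: at $s=\tfrac12$ this collapses to $N+1$ (producing the factor $[g/2]+1$ in part (1) and $[(g-1)/2]+1$ in part (2)), while for $\mathrm{Re}(s) > \tfrac12$ the oscillating truncation factor is absorbed into the error.

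The main obstacle lies in controlling errors uniformly in $s$. At $s = \tfrac12$ the remainder from Lemma \ref{E-PNT} is inflated by the factor $N+1 \asymp g$, explaining why the remainder there is $O(q^{2g}/g)$ rather than $O(q^{2g}/g^2)$. The correction from $P \mid L$ is technically the most delicate piece, since the harmonic factor $\sum_{r=1}^{l} 1/r$ couples the $r$- and $l$-summations; verifying that it stays within the advertised remainder $O(q^{2g}/g^2)$ for $s \ne \tfrac12$ requires splitting the $r$-range carefully and invoking Theorem \ref{thm:pnt} term-by-term, keeping in mind that in the downstream application to $\sum_{u\in\mc H_{g+1}} L(s,\chi_u)$ the second sum is paired with the prefactor $q^{(1-2s)g}$ from the approximate functional equation (Lemma \ref{A-FE}(1)).
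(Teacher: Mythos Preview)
Your approach is essentially the paper's: decompose $\mc H_{g+1}=\bigsqcup_{r}\mc H_{(r,g+1-r)}$, use $\chi_u(L^2)=\mathbf 1_{P\nmid L}$ to turn the inner sum into a count, pull out the main term $A_g(s)\,\#\mc H_{g+1}$ via Lemma~\ref{E-PNT}, and bound the $P\mid L$ correction using $\#\mc H_{(r,g+1-r)}\ll q^{g+r}/r$. The only organizational difference is that the paper keeps the $r$-sum outside and writes the correction as $\alpha_g(s)=\sum_{r\le[g/2]}q^{-r}\#\mc H_{(r,g+1-r)}\sum_{l\ge r}q^{(1-2s)l}$, which it dispatches in one line via $\alpha_g(s)\ll q^g\sum_r r^{-1}\sum_l 1\ll (\log g)\,g\,q^{g}\ll q^{2g}/g^2$; so the piece you flag as ``technically the most delicate'' in fact needs no $r$-range splitting at all, the exponential gap $q^{g}$ versus $q^{2g}$ swallowing the harmonic factor with room to spare.
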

\begin{proof}
(1) We have
\begin{align}
\sum_{u\in\mc H_{g+1}} \sum_{n=0}^{g} q^{-sn} \sum_{\substack{f\in\A_{n}^{+} \\ f = \square}} \chi_{u}(f)
&=\sum_{r=1}^{g} \sum_{u\in\mc H_{(r,g+1-r)}} \sum_{n=0}^{g} q^{-sn} \sum_{\substack{f\in\A_{n}^{+} \\ f = \square}} \chi_{u}(f) \\
&= \sum_{r=1}^{g} \sum_{u\in\mc H_{(r,g+1-r)}} \sum_{l=0}^{[\frac{g}2]} q^{-2sl} \sum_{L\in\A_{l}^{+}} \chi_{u}(L^2).
\end{align}
For any $u = v+F\in\mc H_{(r,g+1-r)}$ with $v=\frac{A}{P}\in\mc F_{r}$ and $F\in\mc G_{g+1-r}$, we have
\begin{align}
\sum_{L\in\A_{l}^{+}} \chi_{u}(L^2) = \sum_{\substack{L\in\A_{l}^{+}\\ (L,P)=1}} 1
= \begin{cases}
q^{l} & \text{ if $l < r$,} \\ q^{l} - q^{l-r} & \text{ if $l \ge r$}.
\end{cases}
\end{align}
Then we have
\begin{align}
\sum_{r=1}^{g} \sum_{u\in\mc H_{(r,g+1-r)}} \sum_{n=0}^{g} q^{-sn} \sum_{\substack{f\in\A_{n}^{+} \\ f = \square}} \chi_{u}(f)
= {A}_{g}(s) \sum_{r=1}^{g} \#\mc H_{(r,g+1-r)} - \alpha_{g}(s),
\end{align}
where $A_{g}(s)$ is given in Proposition \ref{Contribution-Square-001} (1) and
\begin{align}
\alpha_{g}(s) = \sum_{r=1}^{[\frac{g}2]} q^{-r} \#\mc H_{(r,g+1-r)} \sum_{l=r}^{[\frac{g}2]} q^{(1-2s)l}.
\end{align}
By \eqref{E-PNT-2-2}, we have
\begin{align}
{A}_{g}(s) \sum_{r=1}^{g} \#\mc H_{(r,g+1-r)}
= 2 {A}_{g}(s) \frac{q^{2g+1}}{g} + O\left(A_{g}(s) \frac{q^{2g}}{g^2}\right).
\end{align}
For $s=\frac{1}2$, we have $A_{g}(\frac{1}2) \frac{q^{2g}}{g^2} \ll \frac{q^{2g}}{g}$.
For $s\ne \frac{1}2$, we have $A_{g}(s) \frac{q^{2g}}{g^2} \ll \frac{q^{2g}}{g^2}$ and
\begin{align}
{A}_{g}(s) \frac{q^{2g+1}}{g} = \zeta_{\A}(2s) \frac{q^{2g+1}}{g} + O\left(\frac{q^{2g}}{g^2}\right).
\end{align}
Since $\#\mc H_{(r,g+1-r)} \ll q^{g} \frac{q^r}{r}$,
\begin{align}
\alpha_{g}(s) \ll q^{g} \sum_{r=1}^{[\frac{g}2]} \frac{1}{r} \sum_{l=r}^{[\frac{g}2]} q^{(1-2s)l} \ll (\log g) g q^{g} \ll \frac{q^{2g}}{g^2}.
\end{align}
Therefore, we get the result.
Similarly, we can prove (2).
\end{proof}

\begin{prop}\label{E-Contribution-Square-001}

\begin{enumerate}
\item
For $s\in\mb C$ with ${\rm Re}(s) \ge \frac{1}2$, we have
\begin{align}
\hspace{2em}\sum_{P\in\mb P_{g+1}}\sum_{u\in\mc F_{P}} \sum_{n=0}^{g} (\pm 1)^{n} q^{-sn} \sum_{\substack{f\in\A_{n}^+\\ f=\square}} \chi_{u}(f)
= \tilde{A}_{g}(s) \frac{|P|^{2}}{\log_{q}|P|} + O\left(|P|^{\frac{3}{2}}\right),
\end{align}
where 
\begin{align}
\tilde{A}_{g}(s) = \begin{cases}
[\frac{g}{2}]+1 & \text{ if } s = \frac{1}{2}, \\
\zeta_{\A}(2s)(1-q^{(1-2s)([\frac{g}2]+1)}) & \text{ if } {\rm Re}(s) < 1 (s\ne\frac{1}{2}), \\
\zeta_{\A}(2s) & \text{ if } {\rm Re}(s) \ge 1.
\end{cases}
\end{align}
\item
For $s\in\mb C$ with ${\rm Re}(s) \ge \frac{1}2$, we have
\begin{align}
\hspace{3em}q^{(1-2s)g} \sum_{P\in\mb P_{g+1}}\sum_{u\in\mc F_{P}} \sum_{n=0}^{g-1} (\pm 1)^{n} q^{(s-1)n} \sum_{\substack{f\in\A_{n}^+\\ f=\square}} \chi_{u}(f)
= \tilde{B}_{g}(s) \frac{|P|^{2}}{\log_{q}|P|} + O\left(|P|^{2-s}\right),
\end{align}
where 
\begin{align}
\tilde{B}_{g}(s) = \begin{cases}
[\frac{g-1}{2}] + 1 & \text{ if } s = \frac{1}{2}, \\
\zeta_{\A}(2s) q^{(g-[\frac{g-1}{2}])(1-2s)} & \text{ if } s \ne \frac{1}{2}.
\end{cases}
\end{align}
\item
Let $h \in \{g-1, g\}$.
For $s\in\mb C$ with ${\rm Re}(s) \ge \frac{1}2$, we have
\begin{align}
\hspace{3em}q^{-(h+1)s} \sum_{P\in\mb P_{g+1}}\sum_{u\in\mc F_{P}} \sum_{n=0}^{h} \sum_{\substack{f\in\A_{n}^+\\ f=\square}} \chi_{u}(f)
= \zeta_{\A}(2) q^{[\frac{h}2]-(h+1)s} \frac{|P|^{2}}{\log_{q}|P|} + O\left(|P|^{2-s}\right).
\end{align}
\item
For $s\in\mb C$ with ${\rm Re}(s) \ge \frac{1}2$, we have
\begin{align}
\hspace{3em}\zeta_{\A}(2)^{-1} q^{-g} \sum_{P\in\mb P_{g+1}}\sum_{u\in\mc F_{P}} \sum_{n=0}^{g-1} (g-n) \sum_{\substack{f\in\A_{n}^+\\ f=\square}} \chi_{u}(f)
= \tilde{B}(g) \frac{|P|}{\log_{q}|P|} + O\left(|P|\right),
\end{align}
where
\begin{align}
\tilde{B}(g) = q^{([\frac{g-1}2]+1)} \left\{2\zeta_{\A}(2) - \tfrac{1}2 (1+(-1)^{g+1})\right\}.
\end{align}
\end{enumerate}
\end{prop}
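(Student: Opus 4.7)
The strategy is uniform across all four parts: parametrize squares, collapse the character sum, and reduce to an explicit geometric sum in the degree of the square root. First I would write each monic perfect square $f \in \A_n^+$ as $f = L^2$ with $L \in \A_l^+$ and $n = 2l$, so that the signs $(\pm 1)^n = (\pm 1)^{2l} = 1$ drop out. Next, for $u = A/P \in \mc F_P$ with $P \in \mb P_{g+1}$, the Hasse character $\chi_u$ has conductor $P$, so $\chi_u(L^2) = 1$ whenever $P \nmid L$ and $0$ otherwise. Because the inner summation is restricted to $l \le [\tfrac{g}{2}] \le g < g+1 = \deg(P)$, every monic $L$ of degree $l$ is automatically coprime to $P$, giving the basic identity
\begin{align*}
\sum_{u \in \mc F_P} \sum_{L \in \A_l^+} \chi_u(L^2) \;=\; (|P|-1)\, q^l.
\end{align*}
Summing over $P \in \mb P_{g+1}$ and invoking Lemma \ref{E-PNT} (1) in the form $\sum_{P \in \mb P_{g+1}} (|P|-1) = |P|^2/\log_q|P| + O(|P|^{3/2}/\log_q|P|)$, each part reduces to a finite geometric sum in $l$.

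For (1), this geometric sum is $\sum_{l=0}^{[g/2]} q^{(1-2s)l}$, which for $s=\tfrac{1}{2}$ equals $[g/2]+1$ and for $s\ne\tfrac{1}{2}$ equals $\zeta_{\A}(2s)(1 - q^{(1-2s)([g/2]+1)})$. In the range $\mathrm{Re}(s) \ge 1$, the trailing power $q^{(1-2s)([g/2]+1)}$ is small enough that, after multiplication by $|P|^2/\log_q|P|$, it is absorbed into $O(|P|^{3/2})$, leaving just $\zeta_{\A}(2s)$. Part (2) is parallel, with upper limit $[(g-1)/2]$ and the overall prefactor $q^{(1-2s)g}$; the geometric sum collapses to the closed form declared as $\tilde{B}_g(s)$. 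Part (3) has no internal $s$-dependence: $\sum_{l=0}^{[h/2]} q^l = \zeta_{\A}(2)\, q^{-1}(q^{[h/2]+1}-1)$, and multiplication by the external $q^{-(h+1)s}$ yields the stated main term immediately.

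Part (4) is the only one requiring a weighted geometric sum $\sum_{l=0}^{[(g-1)/2]} (g-2l)\, q^l$. I would split this into $g \sum q^l - 2 \sum l q^l$ and evaluate the second piece by differentiating the geometric series, obtaining a leading term of size $\asymp q^{[(g-1)/2]+1}$ whose coefficient depends on the parity of $g$ through $\tfrac{1}{2}(1+(-1)^{g+1})$. Combining with the outer factor $\zeta_{\A}(2)^{-1} q^{-g} \cdot (|P|-1) \cdot \#\mb P_{g+1} \sim |P|/\log_q|P|$ then produces exactly $\tilde{B}(g)\, |P|/\log_q|P|$.

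The principal obstacle is bookkeeping rather than ideas: tracking the floor functions $[g/2]$ and $[(g-1)/2]$ consistently across cases, evaluating the $s = \tfrac{1}{2}$ limit of the indeterminate form in $\zeta_{\A}(2s)(1 - q^{(1-2s)([g/2]+1)})$ to match the stated constants, and, in each range of $\mathrm{Re}(s)$, checking that the product of the main constant $\tilde{A}_g(s)$ or $\tilde{B}_g(s)$ with the prime polynomial theorem error $O(|P|^{3/2}/\log_q|P|)$ stays within the advertised remainder $O(|P|^{3/2})$ or $O(|P|^{2-s})$. The parity-dependent weight in $\tilde{B}(g)$ is the only genuinely subtle constant to pin down.
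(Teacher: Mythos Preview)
Your approach is correct and coincides with the paper's: parametrize squares as $f=L^{2}$ so the signs disappear, observe that $\chi_{u}(L^{2})=1$ because $\deg L\le[\tfrac{g}{2}]<\deg P$, apply Lemma~\ref{E-PNT} (1) to count $\sum_{P}\sum_{u\in\mc F_{P}}1$, and then evaluate the resulting finite geometric (or weighted geometric) sums in $l$, absorbing small terms into the stated remainders. The paper writes out only part~(1) in detail and disposes of (2)--(4) with ``Similarly'', so your more explicit treatment of the parity constant in $\tilde B(g)$ and of the $\mathrm{Re}(s)\ge 1$ absorption in $\tilde A_{g}(s)$ is fully in line with (and slightly more detailed than) the original argument.
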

\begin{proof}
(1) By \eqref{E-PNT-2-1}, we have
\begin{align}\label{E-Contribution-Square-001-1}
\sum_{P\in\mb P_{g+1}}\sum_{u\in\mc F_{P}} \sum_{n=0}^{g} (\pm 1)^{n} q^{-sn} \sum_{\substack{f\in\A_{n}^+\\ f=\square}} \chi_{u}(f)
&= \sum_{l=0}^{[\frac{g}2]} q^{-2sl} \sum_{L\in\A_{l}^+} \sum_{P\in\mb P_{g+1}}\sum_{u\in\mc F_{P}} \chi_{u}(L^2) \nonumber \\
&= \sum_{l=0}^{[\frac{g}2]} q^{(1-2s)l} \sum_{P\in\mb P_{g+1}} \sum_{u\in\mc F_{P}} 1\nonumber \\
&= {A}_{g}(s) \frac{|P|^{2}}{\log_{q}|P|} + O\left({A}_{g}(s) \frac{|P|^{\frac{3}{2}}}{\log_{q}|P|}\right),
\end{align}
where $A_{g}(s)$ is given in Proposition \ref{Contribution-Square-001} (1).
Since ${A}_{g}(s) \ll g+1$, the error term in \eqref{E-Contribution-Square-001-1} is $\ll |P|^{\frac{3}{2}}$.
Since $q^{(1-2s)([\frac{g}2]+1)} |P|^2 \ll |P|^{\frac{3}{2}}$ for ${\rm Re}(s) \ge 1$, we have
\begin{align}
A_{g}(s) \frac{|P|^{2}}{\log_{q}|P|} = \tilde{A}_{g}(s) \frac{|P|^{2}}{\log_{q}|P|} + O\left(|P|^{\frac{3}{2}}\right).
\end{align}
Hence, we get the result.
Similarly, we can prove (2), (3) and (4).
\end{proof}

Now we consider the contribution of non-squares.

\begin{prop}\label{E-C-non-S-Ramified-001}
\begin{enumerate}
\item
For $s\in\mb C$ with ${\rm Re}(s) \ge \frac{1}2$, we have
\begin{align}
\hspace{2em}\sum_{u\in\mc H_{g+1}} \sum_{n=0}^{g} q^{-sn} \sum_{\substack{f\in\A_{n}^{+} \\ f \ne \square}} \chi_{u}(f)
= O\left((\log g) g q^{\frac{3g}{2}}\right).
\end{align}
\item
For $s\in\mb C$ with ${\rm Re}(s) \ge \frac{1}2$, we have
\begin{align}
q^{(1-2s)g} \sum_{u\in\mc H_{g+1}} \sum_{n=0}^{g-1} q^{(s-1)n} \sum_{\substack{f\in\A_{n}^{+} \\ f \ne \square}} \chi_{u}(f)
= O\left((\log g) g q^{\frac{3g}{2}}\right).
\end{align}
\end{enumerate}
\end{prop}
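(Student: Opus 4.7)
The proof should follow a short and direct route that parallels the proofs of the contribution-of-non-squares results in the earlier (odd-characteristic) subsection, with the key character sum bound supplied by Lemma \ref{E-BOUND-3-1} (the even-characteristic analogue of Proposition \ref{bound}). The plan for both parts is: swap the $u$-sum with the $n$- and $f$-sums, invoke Lemma \ref{E-BOUND-3-1} for each non-square $f$ of degree at most $g$, count the monic polynomials of each degree by $\#\A_n^+ = q^n$, and finally evaluate a simple geometric sum in $n$. Since $n \le g < 2g+1$ throughout, the degree hypothesis of Lemma \ref{E-BOUND-3-1} is automatically satisfied.

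For part (1), after exchanging the order of summation and applying Lemma \ref{E-BOUND-3-1}, one obtains
\begin{align*}
\Bigl|\sum_{u\in\mc H_{g+1}} \sum_{n=0}^{g} q^{-sn} \sum_{\substack{f\in\A_{n}^{+}\\ f\ne\square}} \chi_{u}(f)\Bigr|
&\le \sum_{n=0}^{g} q^{-n\,{\rm Re}(s)} \sum_{\substack{f\in\A_{n}^{+}\\ f\ne\square}} \Bigl|\sum_{u\in\mc H_{g+1}} \chi_{u}(f)\Bigr| \\
&\ll (\log g)\, q^{g} \sum_{n=0}^{g} q^{(1-{\rm Re}(s))n}.
\end{align*}
Splitting by whether ${\rm Re}(s) \ge 1$ or $\tfrac12 \le {\rm Re}(s) < 1$, the inner geometric sum is either $O(g)$ or $O(q^{(1-{\rm Re}(s))g})$, and in both regimes the total is dominated by $(\log g)\, g\, q^{3g/2}$ (the extremal case being $s=\tfrac12$).

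For part (2), the same exchange and the same application of Lemma \ref{E-BOUND-3-1} give
\[
q^{(1-2\,{\rm Re}(s))g}\, (\log g)\, q^{g} \sum_{n=0}^{g-1} q^{{\rm Re}(s)\,n}
\ll (\log g)\, q^{g + (1-2\,{\rm Re}(s))g} \cdot q^{{\rm Re}(s) g}
= (\log g)\, q^{(2-{\rm Re}(s))g},
\]
since $\sum_{n=0}^{g-1} q^{{\rm Re}(s)\,n} \ll q^{{\rm Re}(s)g}$ for ${\rm Re}(s)\ge\tfrac12$. Again this is maximized at $s=\tfrac12$, giving $(\log g)\,q^{3g/2}$, which is absorbed into the stated bound.

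There is no real obstacle: the only nontrivial input is the character sum bound of Lemma \ref{E-BOUND-3-1}, whose proof is the genuine work (exploiting the vanishing $\Gamma_{f,P,s}=0$ from Lemma \ref{E-BOUND-3} and summing over the decomposition $\mc H_{g+1}=\bigsqcup_{r=1}^{g}\mc H_{(r,g+1-r)}$, which produces the $\log g$ factor from $\sum_{r=1}^{g}\tfrac{1}{r}$). Once that lemma is in hand, the remaining step is a trivial triangle-inequality argument and a geometric series estimate, exactly analogous to the proof of Proposition \ref{C-non-S-001} in the odd-characteristic case.
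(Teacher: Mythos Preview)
Your proposal is correct and follows essentially the same approach as the paper: exchange the order of summation, apply Lemma \ref{E-BOUND-3-1} to bound the inner character sum by $(\log g)\,q^{g}$, count the polynomials by $\#\A_n^+ = q^n$, and finish with a geometric series maximized at ${\rm Re}(s)=\tfrac12$. In fact you give more detail for part (2) than the paper does (which merely says ``similarly'').
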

\begin{proof}
(1) By using the fact that $\sum_{r=1}^{g} \frac{1}{r} \ll \log g$ and Lemma \ref{E-BOUND-3-1}, we have
\begin{align}
\sum_{u\in\mc H_{g+1}} \sum_{n=0}^{g} q^{-sn} \sum_{\substack{f\in\A_{n}^{+} \\ f \ne \square}} \chi_{u}(f)
&\ll \sum_{n=0}^{g} q^{-sn} \sum_{\substack{f\in\A_{n}^{+} \\ f \ne \square}} \left|\sum_{u\in\mc H_{g+1}} \chi_{u}(f)\right| \\
&\ll (\log g) q^{g} \sum_{n=0}^{g} q^{(1-s)n} \ll (\log g) g q^{\frac{3g}{2}}.
\end{align}
Similarly, we can prove (2).
\end{proof}

\begin{prop}\label{E-C-non-S-001}
\begin{enumerate}
\item
For $s \in \CC$ with ${\rm Re}(s)\ge \frac12$, we have
\begin{align}
\sum_{P\in\mb P_{g+1}}\sum_{u\in\mc F_{P}} \sum_{n=0}^{g} (\pm 1)^{n} q^{-sn} \sum_{\substack{f\in\A_{n}^+\\ f\ne\square}} \chi_{u}(f)
= \begin{cases}
O(|P|^{2-s}) & \text{ if ${\rm Re}(s) < 1$,} \vspace{0.1em}\\ O(|P|) & \text{ if ${\rm Re}(s) \ge 1$.}
\end{cases}
\end{align}
\item
For $s \in \CC$ with ${\rm Re}(s)\ge \frac12$, we have
\begin{align}
q^{(1-2s)g} \sum_{P\in\mb P_{g+1}}\sum_{u\in\mc F_{P}} \sum_{n=0}^{g-1} (\pm 1)^{n} q^{(s-1)n} \sum_{\substack{f\in\A_{n}^+\\ f\ne\square}} \chi_{u}(f)
= O\left(|P|^{2-s}\right).
\end{align}
\item
Let $h \in \{g-1, g\}$.
For $s \in \CC$ with ${\rm Re}(s)\ge \frac12$, we have
\begin{align}
q^{-(h+1)s} \sum_{P\in\mb P_{g+1}}\sum_{u\in\mc F_{P}} \sum_{n=0}^{h} \sum_{\substack{f\in\A_{n}^+\\ f\ne\square}} \chi_{u}(f)
= O\left(|P|^{2-s}\right).
\end{align}
\item
For $s \in \CC$ with ${\rm Re}(s)\ge \frac12$, we have
\begin{align}
\zeta_{\A}(2)^{-1} q^{-g} \sum_{P\in\mb P_{g+1}}\sum_{u\in\mc F_{P}} \sum_{n=0}^{g-1} (g-n) \sum_{\substack{f\in\A_{n}^+\\ f\ne\square}} \chi_{u}(f)
= O\left(|P|\right).
\end{align}
\end{enumerate}
\end{prop}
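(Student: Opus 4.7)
The plan is to mirror the proof of Proposition \ref{C-non-S-001} from the odd characteristic case, but substituting the character sum bound of Lemma \ref{E-BOUND-2-1} for Proposition \ref{bound}. The key observation is that in all four parts the summation index $n$ satisfies $n \le g$, and since $\deg(P)=g+1$ implies $2g+1 = 2\deg(P)-1$, any non-square $f \in \A_n^+$ with $n \le g$ falls within the range $\deg(f) \le 2g+1$ where Lemma \ref{E-BOUND-2-1} applies and gives
\begin{align}
\left|\sum_{P\in\mb P_{g+1}}\sum_{u\in\mc F_{P}} \chi_{u}(f)\right| \ll \frac{|P|}{\log_{q}|P|}.
\end{align}

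For part (1), I would interchange the order of summation to bring the $P$- and $u$-sums inside, apply Lemma \ref{E-BOUND-2-1} to each non-square $f \in \A_n^+$, and use $\#\A_n^+ = q^n$ to obtain
\begin{align}
\left|\sum_{P\in\mb P_{g+1}}\sum_{u\in\mc F_{P}} \sum_{n=0}^{g} (\pm 1)^{n} q^{-sn} \sum_{\substack{f\in\A_{n}^+\\ f\ne\square}} \chi_{u}(f)\right| \ll \frac{|P|}{\log_{q}|P|} \sum_{n=0}^{g} q^{(1-{\rm Re}(s))n}.
\end{align}
The geometric sum is bounded by $q^{(1-s)g} \asymp |P|^{1-s}$ when ${\rm Re}(s)<1$, giving $O(|P|^{2-s})$, and by $g+1 \asymp \log_{q}|P|$ when ${\rm Re}(s)\ge 1$, giving $O(|P|)$. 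Parts (2) and (3) proceed identically: after pulling out the prefactors $q^{(1-2s)g}$ or $q^{-(h+1)s}$ and swapping the order of summation, the remaining geometric sums $\sum_{n=0}^{g-1} q^{sn}$ and $\sum_{n=0}^{h} q^{n}$ combine with the prefactors to yield the stated $O(|P|^{2-s})$ bound in both cases (using $h \le g$ and $|P|=q^{g+1}$).

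Part (4) is analogous but the inner weight $(g-n)$ means the relevant sum is $\sum_{n=0}^{g-1}(g-n)q^n$. Rewriting this as $q^{g}\sum_{m=1}^{g} m q^{-m}$ shows it is $O(q^g)$ since $\sum_{m\ge 1} m q^{-m}$ converges, and then the prefactor $q^{-g}$ cancels to give a final bound of $O(|P|/\log_{q}|P|) = O(|P|)$, as required.

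The whole argument is essentially routine once Lemma \ref{E-BOUND-2-1} is in hand; no step presents a serious obstacle. The only mild care needed is in verifying that the hypotheses of Lemma \ref{E-BOUND-2-1} (namely $\deg(f) \le 2g+1$ and $f \ne \square$) are satisfied throughout, and in tracking the geometric sums to ensure that the various regimes (${\rm Re}(s)<1$ versus ${\rm Re}(s)\ge 1$) produce exactly the error bounds stated in the proposition.
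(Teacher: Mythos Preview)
Your proposal is correct and follows essentially the same approach as the paper: interchange the order of summation, apply Lemma \ref{E-BOUND-2-1} to the inner character sum, then bound the resulting geometric sums in $n$ according to whether ${\rm Re}(s)<1$ or ${\rm Re}(s)\ge 1$. The paper in fact gives only part (1) in detail and dismisses (2)--(4) with ``Similarly'', so your sketches of those parts are slightly more explicit than what appears there.
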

\begin{proof}
(1) By Lemma \ref{E-BOUND-2-1}, we have
\begin{align}
\sum_{P\in\mb P_{g+1}}\sum_{u\in\mc F_{P}} \sum_{n=0}^{g} (\pm 1)^{n} q^{-sn} \sum_{\substack{f\in\A_{n}^+\\ f\ne\square}} \chi_{u}(f)
&\ll \sum_{n=0}^{g} q^{-ns} \sum_{\substack{f\in\A_{n}^+\\ f\ne\square}} \left|\sum_{P\in\mb P_{g+1}}\sum_{u\in\mc F_{P}} \chi_{u}(f)\right| \\
&\ll \frac{|P|}{\log_{q}|P|} \sum_{n=0}^{g}q^{(1-s)n}.
\end{align}
Since
\begin{align}
\sum_{n=0}^{g} q^{(1-s)n} \ll \begin{cases}
|P|^{1-s} (\log_{q}|P|) & \text{ if ${\rm Re}(s)<1$,}  \\
\log_{q}|P| & \text{ if ${\rm Re}(s) \ge 1$,}
\end{cases}
\end{align}
we have
\begin{align}
\frac{|P|}{\log_{q}|P|} \sum_{n=0}^{g}q^{(1-s)n} \ll \begin{cases}
|P|^{2-s} & \text{ if ${\rm Re}(s) < 1$}, \\ |P| & \text{ if ${\rm Re}(s) \ge 1$.}
\end{cases}
\end{align}
Similarly, we can prove (2), (3) and (4).
\end{proof}

\subsubsection{\bf Proof of Theorem \ref{E-F-M-Theorem} (1)}\label{Even-F-M-SS3}
By Lemma \ref{A-FE} (1), we have
\begin{align}
\sum_{u\in\mc H_{g+1}} L(s,\chi_{u})
&= \sum_{u\in\mc H_{g+1}} \sum_{n=0}^{g} q^{-sn} \sum_{f\in\A_{n}^{+}} \chi_{u}(f)
+ q^{(1-2s)g}\sum_{u\in\mc H_{g+1}} \sum_{n=0}^{g} q^{(s-1)n} \sum_{f\in\A_{n}^{+}} \chi_{u}(f).
\end{align}
By Propositions \ref{E-Contribution-Square-Ramified-001} and \ref{E-C-non-S-Ramified-001}, for $s=\frac{1}{2}$, we have
\begin{align}
\sum_{u\in\mc H_{g+1}} L(\tfrac{1}2,\chi_{u}) = 2 \left([\tfrac{g}2]+[\tfrac{g-1}2]+2\right) \frac{q^{2g+1}}{g} + O\left(\frac{q^{2g}}{g}\right),
\end{align}
and, for $s\in\mb C$ with ${\rm Re}(s) \ge \frac{1}2 ~(s \ne \frac{1}2)$,
\begin{align}
\sum_{u\in\mc H_{g+1}} L(s,\chi_{u}) = 2 \zeta_{\A}(2s) \frac{q^{2g+1}}{g} + O\left(\frac{q^{2g}}{g^2}\right).
\end{align}
A simple computation shows that $[\frac{g}2]+[\frac{g-1}2]+2 = g+1$.
This completes the proof of Theorem \ref{E-F-M-Theorem} (1). \hfill\fbox

\subsubsection{\bf Proof of Theorem \ref{E-F-M-Theorem} (2)}\label{Even-F-M-SS4}
By Lemma \ref{A-FE} (2), we have
\begin{align}
\sum_{P\in\mb P_{g+1}}\sum_{u\in\mc F_{P}} L(s,\chi_{u})
&= \sum_{P\in\mb P_{g+1}}\sum_{u\in\mc F_{P}} \sum_{n=0}^{g} q^{-sn} \sum_{f\in\A_{n}^{+}} \chi_{u}(f)  \nonumber \\
&\hspace{-1em}- q^{-(g+1)s}\sum_{P\in\mb P_{g+1}}\sum_{u\in\mc F_{P}} \sum_{n=0}^{g} \sum_{f\in\A_{n}^{+}} \chi_{u}(f)
+ \sum_{P\in\mb P_{g+1}}\sum_{u\in\mc F_{P}} H_{u}(s),
\end{align}
where
\begin{align}
H_{u}(1) = \zeta_{\A}(2)^{-1} q^{-g}\dis\sum_{n=0}^{g-1} \left(g-n\right) \sum_{f\in\A_{n}^{+}} \chi_{u}(f)
\end{align}
and, for $s\ne 1$,
\begin{align}
H_{u}(s) = \eta(s) q^{(1-2s)g} \sum_{n=0}^{g-1} q^{(s-1)n} \sum_{f\in\A_{n}^{+}} \chi_{u}(f)
- \eta(s) q^{-s g} \sum_{n=0}^{g-1} \sum_{f\in\A_{n}^{+}} \chi_{u}(f)
\end{align}
with $\eta(s) = \frac{\zeta_{\A}(2-s)}{\zeta_{\A}(1+s)}$.
First, we consider the case $s=1$.
By Proposition \ref{E-Contribution-Square-001} and \ref{E-C-non-S-001}, we have
\begin{align}
\sum_{P\in\mb P_{g+1}}\sum_{u\in\mc F_{P}} L(1,\chi_{u})  = \tilde{H}(g) \frac{|P|^{2}}{\log_{q}|P|} + O\left(|P|^{\frac{3}{2}}\right),
\end{align}
where
\begin{align}
\tilde{H}(g) := \tilde{A}_{g}(1) - \zeta_{\A}(2) q^{[\frac{g}2]-(g+1)} + \tilde{B}(g) |P|^{-1}.
\end{align}
It is easy to show that
\begin{align}
\tilde{H}(g) \frac{|P|^{2}}{\log_{q}|P|} = \zeta_{\A}(2) \frac{|P|^{2}}{\log_{q}|P|} + O\left(|P|^{\frac{3}{2}}\right).
\end{align}
Hence, we have
\begin{align}
\sum_{P\in\mb P_{g+1}}\sum_{u\in\mc F_{P}} L(1,\chi_{u})  = \zeta_{\A}(2) \frac{|P|^{2}}{\log_{q}|P|} + O\left(|P|^{\frac{3}{2}}\right).
\end{align}
Now, consider the case $s\ne 1$.
For $|s-1|>\varepsilon$, $\eta(s)$ is bounded.
By Proposition \ref{E-Contribution-Square-001} and \ref{E-C-non-S-001}, we have
\begin{align}\label{E-F-M-Real-010}
\sum_{P\in\mb P_{g+1}}\sum_{u\in\mc F_{P}} L(s,\chi_{u}) = \tilde{H}_{g}(s) \frac{|P|^{2}}{\log_{q}|P|} + O\left(|P|^{\frac{3}{2}}\right),
\end{align}
where
\begin{align}
\tilde{H}_{g}(s) := \tilde{A}_{g}(s) - \zeta_{\A}(2) q^{[\frac{g}2]-(g+1)s} + \eta(s) \left\{\tilde{B}_{g}(s) - \zeta_{\A}(2) q^{[\frac{g-1}2]-gs}\right\}.
\end{align}
For $s\in\mb C$ with $1 \le {\rm Re}(s)$ and $|s-1|>\varepsilon$, we have
\begin{align}\label{E-F-M-Real-012}
\zeta_{\A}(2) q^{[\frac{g}2]-(g+1)s} \frac{|P|^{2}}{\log_{q}|P|} &\ll |P|^{\frac{3}{2}},  \\
\eta(s) \left\{\tilde{B}_{g}(s) - \zeta_{\A}(2) q^{[\frac{g-1}2]-gs}\right\} \frac{|P|^{2}}{\log_{q}|P|} &\ll |P|^{\frac{3}{2}}.
\end{align}
Then, by \eqref{E-F-M-Real-012}, we have
\begin{align}\label{E-F-M-Real-013}
\tilde{H}_{g}(s) \frac{|P|^{2}}{\log_{q}|P|} = \tilde{J}_{g}(s) \frac{|P|^{2}}{\log_{q}|P|} + O\left(|P|^{\frac{3}{2}}\right).
\end{align}
Hence, by \eqref{E-F-M-Real-010} and \eqref{E-F-M-Real-013}, we have
\begin{align}
\sum_{P\in\mb P_{g+1}}\sum_{u\in\mc F_{P}} L(s,\chi_{u}) = \tilde{J}_{g}(s) \frac{|P|^{2}}{\log_{q}|P|} + O\left(|P|^{\frac{3}{2}}\right)
\end{align}
for $s\in\mb C$ with $\frac{1}2 \le {\rm Re}(s)$ and $|s-1|>\varepsilon$.
This completes the proof of Theorem \ref{E-F-M-Theorem} (2). \hfill\fbox

\subsubsection{\bf Proof of Theorem \ref{E-F-M-Theorem} (3)}\label{Even-F-M-SS5}
For any $u = v+\xi \in \mc F'_{g+1}$ with $v\in\mc F_{g+1}$, we have $\chi_{u}(f) = (-1)^{\deg(f)} \chi_{v}(f)$.
Then, by Lemma \ref{A-FE} (3), we have
\begin{align}
\sum_{P\in\mb P_{g+1}}\sum_{u\in\mc F'_{P}} L(s,\chi_{u})
&= \sum_{P\in\mb P_{g+1}}\sum_{u\in\mc F_{P}} \sum_{n=0}^{g} (-1)^{n} q^{-sn} \sum_{f\in\A_{n}^{+}} \chi_{u}(f) \nonumber \\
&\quad + (-1)^{g} q^{-(g+1)s} \sum_{P\in\mb P_{g+1}}\sum_{u\in\mc F_{P}} \sum_{n=0}^{g} \sum_{f\in\A_{n}^{+}} \chi_{u}(f) \nonumber \\
&\quad + \nu(s) q^{(1-2s)g} \sum_{P\in\mb P_{g+1}}\sum_{u\in\mc F_{P}} \sum_{n=0}^{g-1} (-1)^{n} q^{(s-1)n} \sum_{f\in\A_{n}^{+}} \chi_{u}(f)  \nonumber \\
&\quad + (-1)^{g+1} \nu(s) q^{-sg} \sum_{P\in\mb P_{g+1}}\sum_{u\in\mc F_{P}} \sum_{n=0}^{g-1} \sum_{f\in\A_{n}^{+}} \chi_{u}(f),
\end{align}
where $\nu(s) := \frac{1+q^{-s}}{1+q^{s-1}}$.
Following the process as in the proof of Theorem \ref{E-F-M-Theorem} (2), we can show that
\begin{align}
\sum_{P\in\mb P_{g+1}}\sum_{u\in\mc F'_{P}} L(s,\chi_{u}) = \tilde{K}_{g}(s) \frac{|P|^{2}}{\log_{q}|P|} + O\left(|P|^{\frac{3}{2}}\right)
\end{align}
for $s\in\mb C$ with $\frac{1}2 \le {\rm Re}(s)$.
This completes the proof of Theorem \ref{E-F-M-Theorem} (3). \hfill\fbox

\section{Second moment of prime $L$-functions at $s=\frac{1}2$}\label{Sect-5}

\subsection{Some lemmas on divisor function}
We present now a few lemmas about the divisor function in $\mathbb{F}_{q}[T]$.

\begin{lem}\label{DIVISOR}
For any positive integer $n$, we have
\begin{equation}\label{DIVISOR1}
\sum_{f\in\A_{n}^{+}}d(f) = n q^{n} + O(q^{n})
\end{equation}
and
\begin{equation}\label{DIVISOR2}
\sum_{f\in\A_{n}^{+}}d(f^{2}) = \frac{1}{2 \zeta_{\A}(2)} n^2 q^{n} + O(n q^{n}).
\end{equation}
\end{lem}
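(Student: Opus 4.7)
The plan is to prove both asymptotics by the standard generating-function method: I would express $\sum_{f\in\A^{+}} d(f)|f|^{-s}$ and $\sum_{f\in\A^{+}} d(f^{2})|f|^{-s}$ as explicit rational functions in $u=q^{-s}$, and then read off the coefficient of $u^{n}$.

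For \eqref{DIVISOR1}, the starting point is the classical identity $\sum_{f\in\A^{+}} d(f)|f|^{-s} = \zeta_{\A}(s)^{2}$, which follows from $d = \mathbf{1}*\mathbf{1}$ together with the multiplicativity of Dirichlet convolution. Writing this in $u$, we have $(1-qu)^{-2} = \sum_{n\ge 0}(n+1)q^{n}u^{n}$, so comparing coefficients yields the \emph{exact} formula $\sum_{f\in\A_{n}^{+}} d(f) = (n+1)q^{n}$, which is $nq^{n}+O(q^{n})$.

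For \eqref{DIVISOR2}, my first step is to derive the Euler product. Since the map $f\mapsto d(f^{2})$ is multiplicative and $d(P^{2e}) = 2e+1$ for every $P\in\mb P$ and $e\ge 0$, the local factor at $P$ equals $\sum_{e\ge 0}(2e+1)|P|^{-es} = (1+|P|^{-s})/(1-|P|^{-s})^{2} = (1-|P|^{-2s})/(1-|P|^{-s})^{3}$. Taking the product over all monic irreducibles then gives
\[
\sum_{f\in\A^{+}} d(f^{2})|f|^{-s} = \frac{\zeta_{\A}(s)^{3}}{\zeta_{\A}(2s)} = \frac{1-qu^{2}}{(1-qu)^{3}}.
\]
Expanding $(1-qu)^{-3} = \sum_{n\ge 0}\binom{n+2}{2}q^{n}u^{n}$ and collecting the coefficient of $u^{n}$ in $(1-qu^{2})(1-qu)^{-3}$ yields, for $n\ge 2$,
\[
\sum_{f\in\A_{n}^{+}} d(f^{2}) = \binom{n+2}{2}q^{n} - \binom{n}{2}q^{n-1} = \tfrac{1}{2}n^{2}(1-q^{-1})q^{n} + O(nq^{n}),
\]
and since $1-q^{-1} = \zeta_{\A}(2)^{-1}$ this is precisely the asymptotic claimed.

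There is no substantial obstacle: both statements reduce to coefficient extraction from simple rational functions of $u$. The only points requiring mild care are (i) identifying the correct Euler factor for $d(f^{2})$ as $(1+x)/(1-x)^{2}$ (equivalently $(1-x^{2})/(1-x)^{3}$), and (ii) keeping track of the subtractive $q^{n-1}$ contribution from the $-qu^{2}$ factor, which is exactly what produces the $\zeta_{\A}(2)^{-1}$ appearing in the main term of \eqref{DIVISOR2}.
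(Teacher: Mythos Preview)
Your proposal is correct and follows essentially the same approach as the paper: the paper merely cites \cite[Proposition 2.5]{Ro02} and \cite[Lemma 4.4]{AK13} for the proof, but immediately afterward (Remark~\ref{REM-DIVISOR}) it records exactly the identity $\zeta_{\rho}(s)=\zeta_{\A}(s)^{3}/\zeta_{\A}(2s)=(1-qz^{2})/(1-qz)^{3}$ and extracts the coefficient of $z^{n}$, obtaining the exact formula $\sum_{f\in\A_{n}^{+}}d(f^{2})=\{1+\tfrac{1}{2}(3+q^{-1})n+\tfrac{1}{2}(1-q^{-1})n^{2}\}q^{n}$, which is precisely what your computation yields.
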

\begin{proof}
Equations \eqref{DIVISOR1} and \eqref{DIVISOR2} are quoted from \cite[Proposition 2.5]{Ro02} and \cite[Lemma 4.4]{AK13}, respectively.
\end{proof}

\begin{rem}\label{REM-DIVISOR}
Let $\rho(f) := d(f^2)$, which is a multiplicative function on $\A^{+}$, and $\zeta_{\rho}(s)$ be the Dirichlet series associated to $\rho$.
In the proof of \cite[Lemma 4.4]{AK13}, it is shown that
\begin{align}
\zeta_{\rho}(s) = \frac{\zeta_{\A}(s)^3}{\zeta_{\A}(2s)} = \frac{1-q^{1-2s}}{(1-q^{1-s})^3}.
\end{align}
Putting $z = q^{-s}$ and considering the power series expansion of $\frac{1-qz^2}{(1-qz)^3}$ at $z=0$, we can see that
\begin{align}\label{DIVISOR3}
\sum_{f\in\A_{n}^{+}} d(f^2) = \left\{1+\tfrac{1}2 (3+q^{-1}) n +\tfrac{1}2 (1-q^{-1})n^2\right\} q^{n}.
\end{align}
\end{rem}

\begin{lem}\label{E-D-SQUARE-2}
Let $P\in\mb P_{r}$. Then, for any integer $n\ge 0$, the value
\begin{align}
\sum_{f\in\A_{n}^{+}, P\nmid f} d(f^2)
\end{align}
is independent of $P$, and depends only on $r$.
Denote this value by $\rho^{*}_{n}(r)$, and let $\rho_{n} = \sum_{f\in\A_{n}^{+}} d(f^2)$.
Then we have
\begin{align}
\rho^{*}_{n}(r) = \begin{cases}
\rho_{n} & \text{ for $0 \le n \le r-1$,} \\
\rho_{n} - 3 \rho_{n-r} & \text{ for $r \le n \le 2r-1$,} \\
\rho_{n}-3\rho_{n-r} + 4 \sum_{l=2}^{m} (-1)^{l} \rho_{n-lr}
& \text{ for $mr \le n < (m+1)r-1$ with $m\ge 2$.}
\end{cases}
\end{align}
\end{lem}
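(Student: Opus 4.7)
The plan is to use Dirichlet--series algebra built on Remark \ref{REM-DIVISOR}. Since $\rho(f) = d(f^{2})$ is multiplicative with $\rho(Q^{k}) = 2k+1$ for every prime $Q$, the identity $\sum_{k\ge 0}(2k+1)z^{k} = (1+z)/(1-z)^{2}$ yields the Euler product
\[
\zeta_{\rho}(s) = \sum_{f\in\A^{+}} \rho(f) |f|^{-s} = \prod_{Q\in\mb P} \frac{1+|Q|^{-s}}{(1-|Q|^{-s})^{2}}.
\]
Removing the Euler factor at $P$ gives
\[
\sum_{\substack{f\in\A^{+}\\ P\nmid f}} \rho(f) |f|^{-s} \;=\; \zeta_{\rho}(s) \cdot \frac{(1-|P|^{-s})^{2}}{1+|P|^{-s}}.
\]
Since the right-hand side depends on $P$ only through $|P| = q^{r}$, the coefficient of $q^{-ns}$ on the left depends only on $r$; this proves the independence claim and identifies $\rho_{n}^{*}(r)$ as that coefficient.

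Next I would expand the local correction factor as a formal power series. Setting $w = q^{-s}$ so that $|P|^{-s} = w^{r}$, one computes
\[
\frac{(1-w^{r})^{2}}{1+w^{r}} \;=\; (1 - 2w^{r} + w^{2r})\sum_{k=0}^{\infty}(-1)^{k} w^{kr}.
\]
Only three terms contribute from the first factor, so collecting the coefficient of $w^{kr}$ gives $1$ when $k=0$, then $-1 - 2 = -3$ when $k=1$, and $(-1)^{k} + 2(-1)^{k-1}\cdot(-1) + (-1)^{k-2} = 4(-1)^{k}$ for every $k\ge 2$. Hence
\[
\frac{(1-w^{r})^{2}}{1+w^{r}} \;=\; 1 - 3w^{r} + 4\sum_{k\ge 2}(-1)^{k} w^{kr}.
\]

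Finally I multiply by $\zeta_{\rho}(s) = \sum_{n\ge 0} \rho_{n} w^{n}$ and read off the coefficient of $w^{n}$. The three ranges of the lemma correspond exactly to which powers $w^{kr}$ fit inside $w^{n}$: for $0\le n \le r-1$ only $k=0$ survives, giving $\rho_{n}$; for $r\le n\le 2r-1$ the term $k=1$ also contributes and we obtain $\rho_{n} - 3\rho_{n-r}$; and for $mr\le n\le (m+1)r-1$ with $m\ge 2$ the terms $k=2,\dots,m$ are additionally picked up, producing $\rho_{n} - 3\rho_{n-r} + 4\sum_{l=2}^{m}(-1)^{l}\rho_{n-lr}$. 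The argument is a short formal manipulation, so no step is genuinely hard; the only care needed is the bookkeeping of the three low-$k$ coefficients, which is why it is worth isolating the local factor $(1-w^r)^2/(1+w^r)$ and expanding it once and for all before extracting coefficients.
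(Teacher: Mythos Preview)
Your proof is correct and follows essentially the same strategy as the paper: both remove the Euler factor at $P$ from $\zeta_{\rho}(s)$ and compare coefficients. The only difference is cosmetic---the paper writes the relation as $(1+z^{r})\zeta_{\rho}^{*}(s)=(1-z^{r})^{2}\zeta_{\rho}(s)$, extracts a two-term recurrence for $\rho_{n}^{*}$, and then solves it, whereas you expand the local correction factor $(1-w^{r})^{2}/(1+w^{r})$ once as $1-3w^{r}+4\sum_{k\ge 2}(-1)^{k}w^{kr}$ and read the coefficients off directly; your route is marginally cleaner but not substantively different.
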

\begin{proof}
Let $\zeta_{\rho}(s)$ be the Dirichlet series given in Remark \ref{REM-DIVISOR}.
Write
\begin{align}\label{E-D-SQUARE-2-001}
\zeta_{\rho}(s) = \sum_{n=0}^{\infty} \rho_{n} q^{-sn} ~~~\text{ with }~~ \rho_{n} = \sum_{f\in\A_{n}^{+}} d(f^2).
\end{align}
Let $\zeta_{\rho}^{*}(s)$ be the power series defined by
\begin{align}
\zeta_{\rho}^{*}(s) = \sum_{\substack{f\in\A^{+}\\ P\nmid f}} \rho(f) |f|^{-s} = \sum_{n=0}^{\infty} \rho^{*}_{n} q^{-sn},
~~~\text{ where }~~ \rho^{*}_{n} = \sum_{\substack{f\in\A^{+}_{n}\\ P\nmid f}} \rho(f).
\end{align}
Then, we have
\begin{align}\label{E-D-SQUARE-2-002}
\zeta_{\rho}(s) = \zeta_{\rho}^{*}(s) \left(1+\sum_{n=1}^{\infty} \rho(P^{n}) |P|^{-ns}\right).
\end{align}
Putting $z=q^{-s}$, since $\rho(P^n) = d(P^{2n}) = 2n+1$, we have
\begin{align}\label{E-D-SQUARE-2-003}
1+\sum_{n=1}^{\infty} \rho(P^{n}) |P|^{-ns} &= 1+\sum_{n=1}^{\infty} (2n+1) |P|^{-ns}
= 1+\sum_{n=1}^{\infty} (2n+1) z^{r n} = \frac{1+z^{r}}{(1-z^{r})^2}.
\end{align}
Hence, by \eqref{E-D-SQUARE-2-001}, \eqref{E-D-SQUARE-2-002} and \eqref{E-D-SQUARE-2-003}, we have
\begin{align}
(1+z^{r})\sum_{n=0}^{\infty} \rho_{n}^{*} z^{n} = (1-z^{r})^2 \sum_{n=0}^{\infty} \rho_{n} z^{n}.
\end{align}
By comparing the coefficients, we have
\begin{align}\label{E-D-SQUARE-2-005}
\begin{cases}
\rho_{n}^{*} = \rho_{n} & \text{ for } 0 \le n \le r-1, \\
\rho_{n}^{*} + \rho_{n-r}^{*} = \rho_{n}-2\rho_{n-r} & \text{ for } r \le n \le 2r-1, \\
\rho_{n}^{*} + \rho_{n-r}^{*} = \rho_{n} - 2\rho_{n-r} + \rho_{n-2r} & \text{ for } 2r \le n.
\end{cases}
\end{align}
From \eqref{E-D-SQUARE-2-005}, we can obtain that $\rho_{n}^{*} = \rho_{n}$ for $0 \le n \le r-1$,
$\rho_{n}^{*} = \rho_{n}-3\rho_{n-r}$ for $r \le n \le 2r-1$ and
$\rho^{*}_{n} = \rho_{n}-3\rho_{n-r} + 4 \sum_{l=2}^{m} (-1)^{l} \rho_{n-lr}$ for $mr \le n < (m+1)r-1$ with $m\ge 2$.
We can also see that the $\rho_{n}^{*}$'s are independent of $P$, and depends only on $r$.
\end{proof}

\subsection{Odd characteristic case}
In this section, we give a proof of Theorem \ref{S-M-Theorem}.
In \S\ref{C-S-S-M}, we obtain several results of the contribution of squares and of non-squares,
which will be used to calculate the second moment of $L$-functions at $s=\frac{1}2$
in \S\ref{subsect5-3} and \S\ref{subsect5-4}.

\subsubsection{\bf Preparations for the proof}\label{C-S-S-M}
We first consider the contribution of squares.

\begin{prop}\label{CSSM-001}
Let $h \in \{2g, 2g-1\}$.
We have
\begin{enumerate}
\item
\begin{align}
\sum_{P\in\mb P_{2g+2}} \sum_{n=0}^{h} (\pm 1)^{n} q^{-\frac{n}{2}} \sum_{\substack{f\in \A_{n}^{+}\\ f =\square}}d(f) \chi_{P}(f)
= \frac{1}{48 \zeta_{\A}(2)} |P| (\log_{q}|P|)^2  + O\left(|P| (\log_{q}|P|)\right),
\end{align}
\item
\begin{align}
q^{-(\frac{h+1}2)} \sum_{P\in\mb P_{2g+2}} \sum_{n=0}^{h} \sum_{\substack{f\in \A_{n}^{+}\\ f =\square}} d(f) \chi_{P}(f)
= O\left(|P|(\log_{q}|P|)\right),
\end{align}
\item
\begin{align}
q^{-(\frac{h+1}2)} \sum_{P\in\mb P_{2g+2}} \sum_{n=0}^{h} (h+1-n) \sum_{\substack{f\in \A_{n}^{+}\\ f =\square}}d(f) \chi_{P}(f)
= O\left(|P|(\log_{q}|P|)\right).
\end{align}
\end{enumerate}
\end{prop}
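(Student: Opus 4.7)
The plan is to reduce each of the three sums to a sum over squares $f = L^2$ with $L \in \A_l^+$ and $n = 2l$, so that $\chi_P(f)$ becomes $\chi_P(L)^2$, which equals $1$ whenever $P \nmid L$. Since $\deg L = l \le [h/2] \le g$ while $\deg P = 2g+2$, the condition $P \nmid L$ is automatic for every $L$ appearing; in particular $(\pm 1)^n = (\pm 1)^{2l} = 1$, explaining why the sign choice drops out of Part (1). Thus the character sum decouples from $P$ and each expression reduces to $\#\mb P_{2g+2}$ times an inner sum in $\rho_l := \sum_{L\in\A_l^+} d(L^2)$. The Prime Polynomial Theorem gives $\#\mb P_{2g+2} = \frac{|P|}{\log_q|P|} + O\bigl(\frac{|P|^{1/2}}{\log_q|P|}\bigr)$, and Remark \ref{REM-DIVISOR} supplies the closed form $\rho_l = \bigl\{1+\tfrac{1}{2}(3+q^{-1})l + \tfrac{1}{2}(1-q^{-1})l^2\bigr\} q^l$.

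For Part (1), the inner sum becomes $\sum_{l=0}^{[h/2]} q^{-l} \rho_l$. Substituting the closed form for $\rho_l$ cancels the factor $q^l$ and leaves a polynomial sum whose dominant contribution is $\frac{1-q^{-1}}{2}\sum_{l=0}^{[h/2]} l^2 = \frac{[h/2]^3}{6\zeta_\A(2)} + O(g^2)$. With $[h/2] = g + O(1)$ this equals $\frac{g^3}{6\zeta_\A(2)} + O(g^2)$; multiplying by $\frac{q^{2g+2}}{2g+2}$ and rewriting in terms of $|P|$ and $\log_q|P|$ (using $|P|(\log_q|P|)^2 = 4(g+1)^2 q^{2g+2}$) produces precisely the claimed leading constant $\frac{1}{48\zeta_\A(2)}$, with error $O(|P|\log_q|P|)$ absorbing both the tail of $\sum l^2$ and the PNT remainder.

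For Part (2) the inner sum is $\sum_{l=0}^{[h/2]} \rho_l$, which is dominated by its top term since $\sum_{k\ge 0}(M-k)^2 q^{-k} \ll M^2$; hence it is $O(g^2 q^g)$. The prefactor $q^{-(h+1)/2}$ then contributes at most a factor of $q^{-1/2}$, so multiplying by $\#\mb P_{2g+2}$ gives $O(g\, q^{2g+2}) = O(|P|\log_q|P|)$, as required.

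Part (3) is the main obstacle, because a naive triangle inequality yields $\sum_{l=0}^{[h/2]}(h+1-2l)\rho_l \ll (h+1)\cdot g^2 q^g = O(g^3 q^g)$, which would be too weak by a factor of $\log_q|P|$. The fix is to exploit the cancellation between the weight $h+1-2l$ (which is small near the top of the summation range) and $\rho_l$ (which is largest there). Substituting $k = [h/2] - l$ turns the sum into $\frac{q^g}{2\zeta_\A(2)}\sum_{k=0}^{[h/2]}(2k+O(1))(g-k)^2 q^{-k}$; since $\sum_{k\ge 0} k^j q^{-k}$ converges for each $j\in\{0,1,2,3\}$, expanding $(g-k)^2$ yields $O(g^2 q^g)$, saving exactly the needed factor of $g$. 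Multiplying by $q^{-(h+1)/2}$ and $\#\mb P_{2g+2}$ then produces $O(|P|\log_q|P|)$, completing the proposition.
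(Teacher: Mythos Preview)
Your proof is correct and follows the same overall strategy as the paper: reduce to the diagonal $f=L^2$, use $\chi_P(L^2)=1$ (automatic since $\deg L\le g<2g+2$), apply the Prime Polynomial Theorem to factor out $\#\mathbb{P}_{2g+2}$, and evaluate the remaining sum in $l$ via the asymptotic $\sum_{L\in\A_l^+}d(L^2)\sim \frac{1}{2\zeta_\A(2)}l^2q^l$.

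The one place you go beyond the paper is Part~(3). The paper simply writes ``Similarly, we can prove~(3)'' after Part~(2), leaving it to the reader. You correctly observe that a literal repetition of the Part~(2) bound---replacing $(h+1-2l)$ by its maximum $h+1\asymp g$---would yield $O(g^3q^g)$ for the inner sum and hence $O(|P|(\log_q|P|)^2)$, one logarithm too many. Your substitution $k=[h/2]-l$ makes explicit the tail-dominance mechanism (the weight $h+1-2l=2k+O(1)$ is small exactly where $\rho_l\asymp(g-k)^2q^{g-k}$ is large), recovering the needed $O(g^2q^g)$ bound via the convergence of $\sum_{k\ge0}k^jq^{-k}$. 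This is the right fix, and it is exactly what the paper's ``similarly'' is tacitly relying on; your write-up simply makes the point honest.
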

\begin{proof}
(1) By Theorem \ref{thm:pnt}, we have
\begin{align}\label{CSSM-001-1}
\sum_{P\in\mb P_{2g+2}} \sum_{n=0}^{h} (\pm 1)^{n} q^{-\frac{n}{2}} \sum_{\substack{f\in \A_{n}^{+}\\ f =\square}}d(f) \chi_{P}(f)
&= \sum_{l=0}^{[\frac{h}2]} q^{-l}\sum_{L\in \A_{l}^{+}}  d(L^2) \sum_{P\in\mb P_{2g+2}} \chi_{P}(L^2) \nonumber\\
&\hspace{-6em}= \frac{|P|}{\log_{q}|P|} \sum_{l=0}^{[\frac{h}2]} q^{-l} \sum_{L\in \A_{l}^{+}}d(L^2)
+ O\left(|P|^{\frac1{2}}(\log_{q}|P|)^2\right).
\end{align}
By\eqref{DIVISOR2}, we have
\begin{align}\label{CSSM-001-2}
\frac{|P|}{\log_{q}|P|} \sum_{l=0}^{[\frac{h}2]} q^{-l} \sum_{L\in \A_{l}^{+}}d(L^2)
&=\frac{|P|}{2\zeta_{\A}(2)\log_{q}|P|} \sum_{l=0}^{[\frac{h}2]} l^2 + O\left(|P| (\log_{q}|P|)\right) \nonumber\\
&= \frac{g^3 |P|}{6 \zeta_{\A}(2)\log_{q}|P|} + O\left(|P| (\log_{q}|P|)\right) \nonumber\\
&= \frac{1}{48 \zeta_{\A}(2)} |P| (\log_{q}|P|)^2 + O\left(|P| (\log_{q}|P|)\right).
\end{align}
Since $|P|^{\frac1{2}}(\log_{q}|P|)^2 \ll |P| (\log_{q}|P|)$, by inserting \eqref{CSSM-001-2} into \eqref{CSSM-001-1}, we obtain the desired result.

\noindent(2) By Theorem \ref{thm:pnt} and \eqref{DIVISOR2}, we have
\begin{align}
q^{-(h+\frac12)} \sum_{P\in\mb P_{2g+2}} \sum_{n=0}^{h} \sum_{\substack{f\in \A_{n}^{+}\\ f =\square}}d(f) \chi_{P}(f)
&= q^{-(h+\frac12)} \sum_{P\in\mb P_{2g+2}} \sum_{l=0}^{[\frac{h}2]} \sum_{L\in \A_{l}^{+}} \chi_{P}(L^2) d(L^2) \nonumber\\
&\ll q^{-(h+\frac12)} \frac{|P|}{\log_{q}|P|} \sum_{l=0}^{g} l^2 q^{l} \ll |P|(\log_{q}|P|).
\end{align}
Similarly, we can prove (3).
\end{proof}

Now, We consider the contribution of non-squares.

\begin{prop}\label{CNSSM-001}
Let $h \in \{2g, 2g-1\}$.
We have
\begin{enumerate}
\item
\begin{align}
\sum_{P\in\mb P_{2g+2}} \sum_{n=0}^{h} (\pm 1)^{n} q^{-\frac{n}{2}} \sum_{\substack{f\in \A_{n}^{+}\\ f \ne\square}}d(f) \chi_{P}(f)
= O\left(|P| (\log_{q}|P|)\right),
\end{align}
\item
\begin{align}
q^{-(\frac{h+1}2)} \sum_{P\in\mb P_{2g+2}} \sum_{n=0}^{h} \sum_{\substack{f\in \A_{n}^{+}\\ f \ne\square}}d(f) \chi_{P}(f)
= O\left(|P| (\log_{q}|P|)\right),
\end{align}
\item
\begin{align}
q^{-(\frac{h+1}2)} \sum_{P\in\mb P_{2g+2}} \sum_{n=0}^{h} (h+1-n) \sum_{\substack{f\in \A_{n}^{+}\\ f \ne\square}}d(f) \chi_{P}(f)
= O\left(|P| (\log_{q}|P|)\right).
\end{align}
\end{enumerate}
\end{prop}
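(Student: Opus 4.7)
The strategy is entirely parallel to the proof of Proposition~\ref{C-non-S-001}, which handled the analogous non-square contribution in the first-moment calculation. The plan is to interchange the order of summation in each of the three expressions so that the sum over $P\in\mb P_{2g+2}$ is innermost, and then apply the character-sum estimate \eqref{bound-2}: for $f\in\A_n^+$ with $f$ not a perfect square one has
\[
\Bigl|\sum_{P\in\mb P_{2g+2}} \chi_P(f)\Bigr| \ll n\, \frac{|P|^{1/2}}{\log_q|P|}.
\]
After this, the divisor factor is controlled by the trivial majorization $\sum_{f\in\A_n^+} d(f) \ll n q^n$, which follows from Lemma~\ref{DIVISOR}~\eqref{DIVISOR1} (the restriction $f\ne\square$ only decreases the sum, so it may be dropped).

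For part (1) this reduces the task to bounding
\[
\frac{|P|^{1/2}}{\log_q|P|}\sum_{n=0}^{h} n\cdot q^{-n/2}\cdot n q^n \;=\; \frac{|P|^{1/2}}{\log_q|P|}\sum_{n=0}^{h} n^2 q^{n/2} \;\ll\; \frac{|P|^{1/2}}{\log_q|P|}\cdot h^2 q^{h/2}.
\]
Since $h\le 2g$, we have $q^{h/2}\ll q^g\asymp|P|^{1/2}$ and $h\ll\log_q|P|$, so this is $O(|P|\log_q|P|)$, exactly the claimed bound. Parts (2) and (3) follow by the same pattern, the only changes being that the $n$-dependent prefactor $q^{-n/2}$ is replaced by $1$ (and compensated by the global $q^{-(h+1)/2}$), and in (3) an extra factor $(h+1-n)$ enters. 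For (2) the interior sum is $\sum_{n=0}^h n^2 q^n\ll h^2 q^h$; for (3) the substitution $m=h-n$ rewrites it as $q^h\sum_{m=0}^h (m+1)(h-m)^2 q^{-m}$, and the geometric decay in $q^{-m}$ overpowers the polynomial factor $m+1$, again yielding $\ll h^2 q^h$. Multiplication by $q^{-(h+1)/2}\cdot|P|^{1/2}/\log_q|P|$ then gives $O(|P|\log_q|P|)$ in each of the remaining cases.

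The only point requiring care is the bookkeeping between $|P|=q^{2g+2}$, $\log_q|P|=2g+2$, and $q^g\asymp|P|^{1/2}$: the factor $1/\log_q|P|$ lost in \eqref{bound-2} is exactly balanced by the $n\asymp\log_q|P|$ coming from the same bound, together with a second factor of $n$ from the divisor estimate, so that the net power of $\log_q|P|$ lands at $1$ as asserted. I anticipate no genuine analytic obstacle; the only routine verifications are that the case $h=2g-1$ differs from $h=2g$ only by harmless multiplicative constants of $O(q^{1/2})$, and that the alternating signs $(\pm 1)^n$ play no role after the triangle inequality.
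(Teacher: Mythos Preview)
Your proposal is correct and follows essentially the same approach as the paper: interchange the order of summation, apply the character-sum bound \eqref{bound-2}, use \eqref{DIVISOR1} to control $\sum_{f\in\A_n^+} d(f)$, and then estimate $\sum_{n=0}^{h} n^2 q^{n/2}$. The paper writes out only part~(1) and dismisses (2) and (3) with ``similarly,'' whereas you give explicit detail for those cases as well; no substantive difference.
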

\begin{proof}
(1) By Proposition \ref{bound} and \eqref{DIVISOR}, we have
\begin{align}
\sum_{P\in\mb P_{2g+2}} \sum_{n=0}^{h} (\pm 1)^{n} q^{-\frac{n}{2}} \sum_{\substack{f\in \A_{n}^{+}\\ f \ne\square}}d(f) \chi_{P}(f)
&\ll \sum_{n=0}^{h} q^{-\frac{n}{2}} \sum_{\substack{f\in \A_{n}^{+}\\ f \ne\square}}d(f) \left|\sum_{P\in\mb P_{2g+2}} \chi_{P}(f)\right|  \\
&\ll \frac{|P|^{\frac1{2}}}{\log_{q}|P|} \sum_{n=0}^{h} n^2 q^{\frac{n}{2}} \ll |P| (\log_{q}|P|).
\end{align}
Similarly, we can prove (2) and (3).
\end{proof}

\subsubsection{\bf Proof of Theorem \ref{S-M-Theorem} (1)}\label{subsect5-3}
By Lemma \ref{A-FES} (2), we can write
\begin{align}\label{A-FES-Real-001}
\sum_{P\in\mb P_{2g+2}} L(\tfrac{1}2,\chi_{P})^{2}
&= \sum_{P\in\mb P_{2g+2}} \sum_{n=0}^{2g} q^{-\frac{n}{2}} \sum_{f\in\A_{n}^{+}} d(f) \chi_{P}(f)
+ \sum_{P\in\mb P_{2g+2}} \sum_{n=0}^{2g-1} q^{-\frac{n}{2}} \sum_{f\in\A_{n}^{+}} d(f) \chi_{P}(f)  \nonumber \\
&\quad- q^{-(g+\frac12)} \sum_{P\in\mb P_{2g+2}} \sum_{n=0}^{2g} \sum_{f\in\A_{n}^{+}} d(f) \chi_{P}(f)
- q^{-g} \sum_{P\in\mb P_{2g+2}} \sum_{n=0}^{2g-1} \sum_{f\in\A_{n}^{+}} d(f) \chi_{P}(f) \nonumber \\
&\quad - \zeta_{\A}(\tfrac{3}2)^{-1} q^{-(g+\frac12)} \sum_{P\in\mb P_{2g+2}} \sum_{n=0}^{2g} (2g+1-n) \sum_{f\in\A_{n}^{+}} d(f) \chi_{P}(f) \nonumber \\
&\quad - \zeta_{\A}(\tfrac{3}2)^{-1} q^{-g} \sum_{P\in\mb P_{2g+2}} \sum_{n=0}^{2g-1} (2g-n) \sum_{f\in\A_{n}^{+}} d(f) \chi_{P}(f).
\end{align}
In the right hand side of \eqref{A-FES-Real-001}, we can write $\sum_{f\in\A_{n}^{+}} d(f) \chi_{P}(f)$ as
\begin{align}
\sum_{f\in\A_{n}^{+}} d(f) \chi_{P}(f)
= \sum_{\substack{f\in\A_{n}^{+}\\f=\square}} d(f) \chi_{P}(f) + \sum_{\substack{f\in\A_{n}^{+}\\f\ne\square}} d(f) \chi_{P}(f).
\end{align}
Then, by Propositions \ref{CSSM-001} and \ref{CNSSM-001}, we have
\begin{align}
\sum_{P\in\mb P_{2g+2}} L(\tfrac{1}2,\chi_{P})^{2} = \frac{1}{24 \zeta_{\A}(2)} |P| (\log_{q}|P|)^2  + O\left(|P| (\log_{q}|P|)\right).
\end{align}
This completes the proof of Theorem \ref{S-M-Theorem} (1). \hfill\fbox

\subsubsection{\bf Proof of Theorem \ref{S-M-Theorem} (2)}\label{subsect5-4}
For any $f\in\A^{+}$, we have $\chi_{\gamma P}(f) = (-1)^{\deg(f)} \chi_{P}(f)$.
By Lemma \ref{A-FES} (3), we have
\begin{align}
\sum_{P\in\mb P_{2g+2}} L(\tfrac{1}2,\chi_{\gamma P})^{2}
&= \sum_{P\in\mb P_{2g+2}}\sum_{n=0}^{2g} (-1)^{n} q^{-\frac{n}{2}} \sum_{f\in\A_{n}^{+}} d(f) \chi_{P}(f)
+ \sum_{P\in\mb P_{2g+2}}\sum_{n=0}^{2g-1} (-1)^{n} q^{-\frac{n}{2}} \sum_{f\in\A_{n}^{+}} d(f) \chi_{P}(f)\nonumber \\
&\hspace{1.5em} + q^{-(g+\frac12)} \sum_{P\in\mb P_{2g+2}}\sum_{n=0}^{2g} \sum_{f\in\A_{n}^{+}} d(f) \chi_{P}(f)
+ q^{-g} \sum_{P\in\mb P_{2g+2}}\sum_{n=0}^{2g-1} \sum_{f\in\A_{n}^{+}} d(f) \chi_{P}(f)\nonumber \\
&\hspace{1.5em} + \frac{\zeta_{\A}(\tfrac{3}2)}{\zeta_{\A}(2)} q^{-(g+\frac12)} \sum_{P\in\mb P_{2g+2}}
\sum_{n=0}^{2g} (2g+1-n) \sum_{f\in\A_{n}^{+}} d(f) \chi_{P}(f)\nonumber  \\
&\hspace{1.5em} + \frac{\zeta_{\A}(\tfrac{3}2)}{\zeta_{\A}(2)} q^{-g}\sum_{P\in\mb P_{2g+2}}
\sum_{n=0}^{2g-1} (2g-n) \sum_{f\in\A_{n}^{+}} d(f) \chi_{P}(f).
\end{align}
Then, by Propositions \eqref{CSSM-001} and \eqref{CNSSM-001}, we have
\begin{align}
\sum_{P\in\mb P_{2g+2}} L(\tfrac{1}2,\chi_{\gamma P})^{2} = \frac{1}{24 \zeta_{\A}(2)} |P| (\log_{q}|P|)^2  + O\left(|P| (\log_{q}|P|)\right).
\end{align}
This completes the proof of Theorem \ref{S-M-Theorem} (2). \hfill\fbox

\subsection{Even characteristic case}
In this section, we give a proof of Theorem \ref{E-S-M-Theorem}.
In \S\ref{E-C-S-S-M}, we obtain several results of the contribution of squares and of non-squares,
which will be used to calculate the second moment of $L$-functions at $s=\frac{1}2$
in \S\ref{subsect5-3-3}, \S\ref{subsect5-3-4} and \S\ref{subsect5-3-5}.

\subsubsection{\bf Preparations for the proof}\label{E-C-S-S-M}

\begin{prop}\label{E-C-S-S-M-000}
Let $h\in\{2g-1, 2g\}$.
We have
\begin{align}
\sum_{r=1}^{g} \sum_{u\in\mc H_{(r,g+1-r)}} \sum_{n=0}^{h} q^{-\frac{n}2} \sum_{\substack{f\in\A_{n}^{+}\\ f=\square}} d(f) \chi_{u}(f)
= \frac{1}{3 \zeta_{\A}(2)} g^2 q^{2g+1} + O\left(g q^{2g}\right).
\end{align}
\end{prop}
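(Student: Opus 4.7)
The plan is to exploit that for $f=L^{2}$ a perfect square, the Hasse-symbol character satisfies $\chi_{u}(L^{2}) = \{u/L\}^{2}$, which collapses to $1$ when $P \nmid L$ and $0$ otherwise, where $P \in \mb P_{r}$ is the prime denominator appearing in the normalization of $u \in \mc H_{(r,g+1-r)}$. Since $d(L^{2})$ equals $\rho(L)$ in the notation of Remark \ref{REM-DIVISOR}, and since Lemma \ref{E-D-SQUARE-2} asserts that $\rho_{l}^{*}(r) = \sum_{L\in\A_{l}^{+},\,P\nmid L} d(L^{2})$ is independent of the particular $P \in \mb P_{r}$, and since the number of $u \in \mc H_{(r,g+1-r)}$ with denominator $P$ equals $\#\mc F_{P}\cdot \#\mc G_{g+1-r}$, summing first over $u$ with fixed $P$ and then over $P\in\mb P_{r}$ (and noting that odd $n$ contribute nothing since there is no square of odd degree) collapses the triple sum to
\[
\sum_{u\in\mc H_{(r,g+1-r)}} \sum_{n=0}^{h} q^{-\frac{n}{2}} \sum_{\substack{f\in\A_{n}^{+}\\ f=\square}} d(f)\chi_{u}(f) = \#\mc H_{(r,g+1-r)} \sum_{l=0}^{[h/2]} q^{-l}\rho_{l}^{*}(r).
\]

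Next I would evaluate the inner sum. Writing $T_{M} := \sum_{l=0}^{M} q^{-l}\rho_{l}$, the closed form in \eqref{DIVISOR3} gives $q^{-l}\rho_{l} = 1 + \tfrac{3+q^{-1}}{2}l + \tfrac{1-q^{-1}}{2}l^{2}$, whence (using $\zeta_{\A}(2)^{-1} = 1-q^{-1}$) $T_{M} = \tfrac{M^{3}}{6\zeta_{\A}(2)} + O(M^{2})$. By Lemma \ref{E-D-SQUARE-2} I can then telescope
\[
\sum_{l=0}^{[h/2]} q^{-l}\rho_{l}^{*}(r) = T_{[h/2]} - 3q^{-r}T_{[h/2]-r} + 4\sum_{j\ge 2} (-1)^{j} q^{-jr} T_{[h/2]-jr},
\]
with the convention $T_{M}=0$ for $M<0$.

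The leading contribution is $T_{[h/2]}\cdot\#\mc H_{g+1}$. Since $[h/2]\in\{g-1,g\}$ in both admissible cases of $h$, $T_{[h/2]} = \tfrac{g^{3}}{6\zeta_{\A}(2)} + O(g^{2})$, and Lemma \ref{E-PNT}(2) supplies $\sum_{r=1}^{g}\#\mc H_{(r,g+1-r)} = \#\mc H_{g+1} = \tfrac{2q^{2g+1}}{g} + O(q^{2g}/g^{2})$, so multiplying these two expansions gives precisely
\[
\frac{g^{3}}{6\zeta_{\A}(2)}\cdot\frac{2q^{2g+1}}{g} + O(gq^{2g}) = \frac{g^{2}q^{2g+1}}{3\zeta_{\A}(2)} + O(gq^{2g}).
\]
For the correction terms I would use the crude bound $|T_{M}|\ll g^{3}$ for $M\le g$ together with $\#\mc H_{(r,g+1-r)}\ll q^{g+r}/r$ (which follows from Lemma \ref{E-PNT}(2) and $\#\mb P_{r}\ll q^{r}/r$), giving the overall bound $g^{3}q^{g}\sum_{r=1}^{g}\tfrac{1}{r}\ll g^{3}(\log g)q^{g}$, which is $o(gq^{2g})$ and hence absorbed.

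The main technical point is precisely this last uniform control: for small $r$, Lemma \ref{E-D-SQUARE-2} produces as many as $\lfloor [h/2]/r\rfloor = O(g/r)$ correction terms, but the geometric damping $q^{-jr}$ coupled with the $1/r$-decay of $\#\mb P_{r}/q^{r}$ makes the combined correction harmless on the scale $gq^{2g}$.
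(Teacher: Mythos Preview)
Your proposal is correct and follows essentially the same approach as the paper: the same reduction to $\sum_{l}q^{-l}\rho_{l}^{*}(r)$ via $\chi_{u}(L^{2})=\mathbf{1}_{P\nmid L}$, the same decomposition using Lemma \ref{E-D-SQUARE-2}, the same asymptotic $\sum_{l\le M}q^{-l}\rho_{l}=\tfrac{1}{6\zeta_{\A}(2)}M^{3}+O(M^{2})$ from \eqref{DIVISOR3}, and the same error bounds $\#\mc H_{(r,g+1-r)}\ll q^{g+r}/r$ combined with the $q^{-jr}$ damping. One tiny citation point: the bound $\#\mc H_{(r,g+1-r)}\ll q^{g+r}/r$ is extracted from the \emph{proof} of Lemma \ref{E-PNT}(2) (the explicit formula $\#\mc H_{(r,g+1-r)}=2\zeta_{\A}(2)^{-1}q^{g+1-r}(q^{r}-1)\#\mb P_{r}$), not from the statement itself.
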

\begin{proof}
We only prove the case $h=2g$.
By similar method, we can prove the case $h=2g-1$.
By Lemma \ref{E-D-SQUARE-2}, we have
\begin{align}
\sum_{r=1}^{g} \sum_{u\in\mc H_{(r,g+1-r)}} \sum_{n=0}^{2g} q^{-\frac{n}2}
\sum_{\substack{f\in\A_{n}^{+}\\ f=\square}} d(f) \chi_{u}(f)
&= \sum_{r=1}^{g} \sum_{u\in\mc H_{(r,g+1-r)}} \sum_{l=0}^{g} q^{-l}
\sum_{L\in\A_{l}^{+}} d(L^{2}) \chi_{u}(L^2)  \nonumber\\
&\hspace{1em}= \sum_{r=1}^{g} \sum_{u\in\mc H_{(r,g+1-r)}} \sum_{l=0}^{g} q^{-l} \rho_{l}^{*}(r).
\end{align}
By Lemma \ref{E-D-SQUARE-2} again, we have
\begin{align}\label{E-C-S-S-M-000-001}
\sum_{r=1}^{g} \sum_{u\in\mc H_{(r,g+1-r)}} \sum_{l=0}^{g} q^{-l} \rho_{l}^{*}(r)
&= \sum_{r=1}^{g} \sum_{u\in\mc H_{(r,g+1-r)}} \sum_{l=0}^{g} q^{-l} \rho_{l}
-3 \sum_{r=1}^{g} \sum_{u\in\mc H_{(r,g+1-r)}} \sum_{l=r}^{g} q^{-l} \rho_{l-r} \nonumber \\
&\hspace{1em} + 4 \sum_{r=1}^{g} \sum_{u\in\mc H_{(r,g+1-r)}} \sum_{m=2}^{[\frac{g}r]}(-1)^{m}\sum_{l=mr}^{g} q^{-l} \rho_{l-mr}.
\end{align}
From \eqref{DIVISOR3}, we have $q^{-n}\rho_{n} = 1+\tfrac{1}2 (3+q^{-1}) n +\tfrac{1}2 (1-q^{-1}) n^2$.
Hence, we have
\begin{align}
\sum_{l=0}^{g} q^{-l} \rho_{l} &= \tfrac{1}{6} (1-q^{-1}) g^3 + O(g^2), \\
\sum_{l=r}^{g} q^{-l} \rho_{l-r} &= q^{-r} \sum_{k=0}^{g-r} q^{-k} \rho_{k} \ll q^{-r} g^3, \\
\sum_{m=2}^{[\frac{g}r]}(-1)^{m}\sum_{l=mr}^{g} q^{-l} \rho_{l-mr} &= \sum_{m=2}^{[\frac{g}r]}(-1)^{m} q^{-mr} \sum_{k=0}^{g-mr} q^{-k} \rho_{k}
\ll g^3 \sum_{m=2}^{[\frac{g}r]} q^{-mr} \ll \frac{g^4}{r} q^{-2r}.
\end{align}
Then, using \eqref{E-PNT-2-2} and the fact that $\#\mc H_{(r,g+1-r)} \le q^{g+1} \frac{q^{r}}{r}$, we have
\begin{align}
\sum_{r=1}^{g} \sum_{u\in\mc H_{(r,g+1-r)}} \sum_{l=0}^{g} q^{-l} \rho_{l}
&= \frac{1}{3} (1-q^{-1}) g^2 q^{2g+1} + O\left(g q^{2g}\right), \label{E-C-S-S-M-000-002}\\
\sum_{r=1}^{g} \sum_{u\in\mc H_{(r,g+1-r)}} \sum_{l=r}^{g} q^{-l} \rho_{l-r}
&\ll g^3 q^{g} \sum_{r=1}^{g} \frac{1}{r} \ll (\log g) g^3 q^{g} \ll g q^{2g}, \label{E-C-S-S-M-000-003}\\
\sum_{r=1}^{g} \sum_{u\in\mc H_{(r,g+1-r)}} \sum_{m=2}^{[\frac{g}r]}(-1)^{m}\sum_{l=mr}^{g} q^{-l} \rho_{l-mr}
&\ll g^4 q^{g} \sum_{r=1}^{g} \frac{q^{-r}}{r^2} \ll g^5 q^g \ll g q^{2g}. \label{E-C-S-S-M-000-004}
\end{align}
By inserting \eqref{E-C-S-S-M-000-002}, \eqref{E-C-S-S-M-000-003} and \eqref{E-C-S-S-M-000-004} into \eqref{E-C-S-S-M-000-001}, we get the result.
\end{proof}

\begin{prop}\label{E-C-S-S-M-001}
Let $h\in\{2g-1, 2g\}$.
We have
\begin{enumerate}
\item
\begin{align}
\sum_{P\in\mb P_{g+1}}\sum_{u\in\mc F_{P}}\sum_{n=0}^{h} (\pm 1)^{n} q^{-\frac{n}{2}}\sum_{\substack{f\in\A_{n}^+\\ f=\square}}d(f) \chi_{u}(f)
= \frac{1}{6\zeta_{\A}(2)} |P|^{2} (\log_{q}|P|)^2 + O\left(|P|^{2} (\log_{q}|P|)\right),
\end{align}
\item
\begin{align}
q^{-(\frac{h+1}2)} \sum_{P\in\mb P_{g+1}}\sum_{u\in\mc F_{P}} \sum_{n=0}^{h} \sum_{\substack{f\in \A_{n}^{+}\\ f =\square}}d(f) \chi_{u}(f)
= O\left(|P|^{2} (\log_{q}|P|)\right),
\end{align}
\item
\begin{align}
q^{-(\frac{h+1}2)} \sum_{P\in\mb P_{g+1}}\sum_{u\in\mc F_{P}} \sum_{n=0}^{h} (h+1-n) \sum_{\substack{f\in \A_{n}^{+}\\ f =\square}}d(f) \chi_{u}(f)
= O\left(|P|^{2} (\log_{q}|P|)\right).
\end{align}
\end{enumerate}
\end{prop}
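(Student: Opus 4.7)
The proof should follow the approach of Proposition \ref{CSSM-001} adapted via the technique of Proposition \ref{E-Contribution-Square-001}. The plan is to swap the order of summation, putting the sum over $u \in \mc F_P$ innermost, and evaluate it explicitly by exploiting that $f = L^2$ is always a perfect square.

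The crucial observation is that for any $0 \ne L \in \A^+$ with $\deg(L) \le g$ and any $u = A/P \in \mc F_P$ with $P \in \mb P_{g+1}$, we have $\chi_u(L^2) = \{u/L^2\} = (-1)^{[u, L^2)} = 1$, since $L^2$ is coprime to $P$ (because $\deg L < \deg P$) and $[u, L^2) = 2[u, L) = 0$ in $\bF_2$. Consequently $\sum_{u \in \mc F_P} \chi_u(L^2) = |P| - 1$ for every such $L$. Substituting $f = L^2$, $n = 2l$ (for which $(\pm 1)^n = 1$ regardless of sign choice) and interchanging the order of summation reduces the left side of (1) to
\begin{align*}
(|P|-1)\,\#\mb P_{g+1}\sum_{l=0}^{[h/2]} q^{-l}\rho_l,
\end{align*}
where $\rho_l = \sum_{L \in \A_l^+} d(L^2)$. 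Using the explicit formula $q^{-l}\rho_l = 1 + \tfrac{1}2(3+q^{-1})l + \tfrac{1}2(1-q^{-1})l^2$ from \eqref{DIVISOR3}, together with the identity $1-q^{-1} = \zeta_\A(2)^{-1}$ and $[h/2] = g + O(1)$, the inner sum evaluates to $\frac{g^3}{6\,\zeta_\A(2)} + O(g^2)$. Combined with $(|P|-1)\,\#\mb P_{g+1} = \frac{|P|^2}{\log_q|P|} + O\!\left(\frac{|P|^{3/2}}{\log_q|P|}\right)$ from Theorem \ref{thm:pnt} and the identity $g+1 = \log_q|P|$, this yields the claimed main term $\frac{1}{6\zeta_\A(2)}|P|^2(\log_q|P|)^2$ with all remaining contributions absorbed into $O(|P|^2 \log_q|P|)$.

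For (2) and (3) the same reduction yields $q^{-(h+1)/2}(|P|-1)\,\#\mb P_{g+1}\sum_{l=0}^{[h/2]} w(l)\rho_l$ with $w(l) = 1$ in (2) and $w(l) = h+1-2l$ in (3). Since $\rho_l \ll l^2 q^l$, the partial sum satisfies $\sum_{l=0}^{[h/2]} \rho_l \ll g^2 q^g$; in (3), substituting $k = [h/2]-l$ and using the geometric convergence $\sum_{k\ge 0}(1+2k)q^{-k} < \infty$ shows $\sum_l (h+1-2l)\rho_l \ll g^2 q^g$ as well (the weight $h+1-2l$ is small precisely where $q^l$ is largest). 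Multiplying by $q^{-(h+1)/2} \asymp q^{-g}$ and $(|P|-1)\,\#\mb P_{g+1} \asymp |P|^2/\log_q|P|$ produces the claimed bound $O(|P|^2 \log_q|P|)$ in both cases. The only mild subtlety is the cancellation check in (3) between the weight and the growth of $\rho_l$; otherwise the argument is essentially bookkeeping, and no serious obstacle is expected.
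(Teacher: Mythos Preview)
Your proof is correct and follows essentially the same approach as the paper: swap summation order, use $\chi_u(L^2)=1$, apply Lemma~\ref{E-PNT} (equivalently, $(|P|-1)\#\mb P_{g+1}$ via Theorem~\ref{thm:pnt}), and evaluate $\sum_l q^{-l}\rho_l$ via the divisor estimates. The only cosmetic differences are that the paper cites \eqref{DIVISOR2} rather than the exact formula \eqref{DIVISOR3}, and for part (3) the paper simply writes ``similarly'' while you spell out the geometric-decay argument showing $\sum_l (h+1-2l)\rho_l \ll g^2 q^g$; your extra care there is warranted, since the crude bound $(h+1)\sum_l \rho_l$ would lose a factor of $\log_q|P|$.
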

\begin{proof}
(1) By \eqref{E-PNT-2-1}, we have
\begin{align}\label{E-C-S-S-M-001-1}
\sum_{P\in\mb P_{g+1}}\sum_{u\in\mc F_{P}}\sum_{n=0}^{h} (\pm 1)^{n} q^{-\frac{n}{2}}\sum_{\substack{f\in\A_{n}^+\\ f=\square}}d(f) \chi_{u}(f)
&= \sum_{l=0}^{[\frac{h}2]} q^{-l}\sum_{L\in \A_{l}^{+}} d(L^2) \sum_{P\in\mb P_{g+1}}\sum_{u\in\mc F_{P}}\chi_{u}(L^2) \nonumber \\
&\hspace{-8em}= \frac{|P|^{2}}{\log_{q}|P|} \sum_{l=0}^{[\frac{h}2]} q^{-l} \sum_{L\in \A_{l}^{+}}d(L^2) + O\left(|P|^{\frac{3}{2}} (\log_{q}|P|)^2\right).
\end{align}
By \eqref{DIVISOR2}, we have
\begin{align}\label{E-C-S-S-M-001-2}
\frac{|P|^{2}}{\log_{q}|P|} \sum_{l=0}^{[\frac{h}2]} q^{-l} \sum_{L\in \A_{l}^{+}}d(L^2)
&= \frac{1}{2\zeta_{\A}(2)} \frac{|P|^{2}}{\log_{q}|P|} \sum_{l=0}^{[\frac{h}2]} l^2 + O\left(|P|^{2} (\log_{q}|P|)\right)  \nonumber \\
&= \frac{1}{6\zeta_{\A}(2)} |P|^{2} (\log_{q}|P|)^2 + O\left(|P|^{2} (\log_{q}|P|)\right).
\end{align}
Since $|P|^{\frac{3}{2}} (\log_{q}|P|)^2 \ll |P|^{2} (\log_{q}|P|)$, by inserting \eqref{E-C-S-S-M-001-2} into \eqref{E-C-S-S-M-001-1}, we get the result.

\noindent(2) By \eqref{E-PNT-2-1} and \eqref{DIVISOR2}, we have
\begin{align}
q^{-(\frac{h+1}2)} \sum_{P\in\mb P_{g+1}}\sum_{u\in\mc F_{P}} \sum_{n=0}^{h} \sum_{\substack{f\in \A_{n}^{+}\\ f =\square}}d(f) \chi_{u}(f)
&= q^{-(\frac{h+1}2)} \sum_{P\in\mb P_{g+1}}\sum_{u\in\mc F_{P}} \sum_{l=0}^{[\frac{h}2]} \sum_{L\in \A_{l}^{+}} d(L^2) \chi_{u}(L^2)  \nonumber\\
&\ll q^{-(\frac{h+1}2)} \frac{|P|^{2}}{\log_{q}|P|} \sum_{l=0}^{[\frac{h}2]} l^2 q^{l} \ll |P|^{2} (\log_{q}|P|).
\end{align}
Similarly, we can prove (3).
\end{proof}

Now, we consider the contribution of non-squares.

\begin{prop}\label{E-C-NS-S-M-000}
Let $h\in\{2g-1, 2g\}$.
We have
\begin{align}
\sum_{u\in\mc H_{g+1}} \sum_{n=0}^{h} q^{-\frac{n}2} \sum_{\substack{f\in\A_{n}^{+}\\ f\ne\square}} d(f) \chi_{u}(f)
= O\left((\log g) g q^{2g}\right).
\end{align}
\end{prop}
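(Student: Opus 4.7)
The plan is to mimic the argument used for Proposition \ref{CNSSM-001} in the odd characteristic setting, with the even-characteristic character sum bound, Lemma \ref{E-BOUND-3-1}, serving as the key input in place of the Weil-type bound of Proposition \ref{bound}. Recall that Lemma \ref{E-BOUND-3-1} asserts that whenever $f \in \A^+$ is not a perfect square and $\deg(f) < 2g+1$,
$$\Bigl|\sum_{u\in\mc H_{g+1}} \chi_u(f)\Bigr| \ll (\log g)\, q^g.$$

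First I would interchange the order of summation to put the sum over $u\in\mc H_{g+1}$ innermost. Since the outer range is $0 \le n \le h \le 2g$, every relevant $f\in\A_n^+$ satisfies $\deg(f)\le 2g<2g+1$, so Lemma \ref{E-BOUND-3-1} applies uniformly and gives
$$\sum_{u\in\mc H_{g+1}} \sum_{n=0}^{h} q^{-\frac{n}2} \sum_{\substack{f\in\A_{n}^{+}\\ f\ne\square}} d(f)\, \chi_u(f) \ll (\log g)\, q^g \sum_{n=0}^{h} q^{-\frac{n}{2}} \sum_{f\in\A_n^+} d(f),$$
where on the right I have dropped the non-square restriction and estimated by the full divisor sum.

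Next I would invoke the standard divisor estimate $\sum_{f\in\A_n^+} d(f) = n q^{n} + O(q^n)$ from Lemma \ref{DIVISOR}(1), so that the right-hand side is bounded by
$$(\log g)\, q^g \sum_{n=0}^{h} n\, q^{n/2} \ll (\log g)\, q^g \cdot h\, q^{h/2}.$$
Substituting $h \le 2g$ yields $(\log g)\, q^g \cdot 2g \cdot q^g = O\bigl((\log g)\, g\, q^{2g}\bigr)$, which is exactly the claimed bound.

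There is no serious obstacle: Lemma \ref{E-BOUND-3-1} does the heavy lifting, and the only verification needed is that the degree condition $\deg(f) < 2g+1$ of that lemma is met throughout, which it plainly is. The one structural difference with the odd-characteristic analog (Proposition \ref{CNSSM-001}) is the extra $\log g$ factor in Lemma \ref{E-BOUND-3-1} compared to Proposition \ref{bound}; this is precisely what prevents a sharper bound of the form $O(g\, q^{2g})$ and accounts for the $\log g$ in the statement.
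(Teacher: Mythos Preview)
Your proposal is correct and follows essentially the same approach as the paper's own proof: interchange summation, apply Lemma \ref{E-BOUND-3-1} to the inner character sum (valid since $\deg(f)\le 2g<2g+1$), drop the non-square restriction, and use the divisor estimate \eqref{DIVISOR1} to bound $\sum_{n=0}^{h} n\, q^{n/2} \ll g\, q^{g}$.
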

\begin{proof}
We only prove the case $h=2g$.
Similarly, we can prove the case $h=2g-1$.
By Lemma \ref{E-BOUND-3-1} and \eqref{DIVISOR1}, we have
\begin{align}
&\sum_{u\in\mc H_{g+1}} \sum_{n=0}^{2g} q^{-\frac{n}2} \sum_{\substack{f\in\A_{n}^{+}\\ f\ne\square}} d(f) \chi_{u}(f)
\ll \sum_{n=0}^{2g} q^{-\frac{n}2} \sum_{\substack{f\in\A_{n}^{+} \\ f \ne \square}} d(f)
\left|\sum_{u\in\mc H_{g+1}} \chi_{u}(f)\right|  \\
&\hspace{2em}\ll (\log g) q^{g} \sum_{n=0}^{2g} q^{-\frac{n}2} \sum_{f\in\A_{n}^{+}} d(f)
\ll (\log g) q^{g} \sum_{n=0}^{2g} n q^{\frac{n}2} \ll (\log g) g q^{2g}.
\end{align}
\end{proof}

\begin{prop}\label{E-C-NS-S-M-001}
Let $h\in\{2g-1, 2g\}$.
We have
\begin{enumerate}
\item
\begin{align}
\sum_{P\in\mb P_{g+1}}\sum_{u\in\mc F_{P}}\sum_{n=0}^{h} (\pm 1)^{n} q^{-\frac{n}{2}}\sum_{\substack{f\in\A_{n}^+\\ f\ne\square}}d(f) \chi_{u}(f)
= O\left(|P|^{2}\right),
\end{align}
\item
\begin{align}
q^{-(\frac{h+1}2)} \sum_{P\in\mb P_{g+1}}\sum_{u\in\mc F_{P}} \sum_{n=0}^{h} \sum_{\substack{f\in \A_{n}^{+}\\ f \ne\square}}d(f) \chi_{u}(f)
= O\left(|P|^{2}\right),
\end{align}
\item
\begin{align}
q^{-(\frac{h+1}2)} \sum_{P\in\mb P_{g+1}}\sum_{u\in\mc F_{P}} \sum_{n=0}^{h} (h+1-n) \sum_{\substack{f\in \A_{n}^{+}\\ f \ne\square}}d(f) \chi_{u}(f)
= O\left(|P|^{2}\right).
\end{align}
\end{enumerate}
\end{prop}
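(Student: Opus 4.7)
The plan is to follow exactly the strategy used in the odd-characteristic analogue (Proposition \ref{CNSSM-001}), substituting Lemma \ref{E-BOUND-2-1} for the role there played by Proposition \ref{bound}. Since $h\in\{2g-1,2g\}$, every non-square $f\in\A_n^+$ with $n\le h$ has $\deg(f)\le 2g<2g+1$, so Lemma \ref{E-BOUND-2-1} applies uniformly and yields
\[
\Bigl|\sum_{P\in\mb P_{g+1}}\sum_{u\in\mc F_P}\chi_u(f)\Bigr|\ll \frac{|P|}{\log_q|P|}.
\]

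For part (1), I would interchange the order of summation so that the $(P,u)$-sum lies innermost, apply the bound above (absorbing $(\pm 1)^n$ by the triangle inequality), and then invoke the divisor-function estimate $\sum_{f\in\A_n^+}d(f)\ll n q^n$ from Lemma \ref{DIVISOR}. This reduces the task to estimating
\[
\frac{|P|}{\log_q|P|}\sum_{n=0}^{h}n\,q^{n/2}\ll \frac{|P|}{\log_q|P|}\cdot h\,q^{h/2}.
\]
Because $h\le 2g$ and $|P|=q^{g+1}$, one has $q^{h/2}\le q^g=q^{-1}|P|$ and $h\ll \log_q|P|$, so the product is $O(|P|^2)$ as required. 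Parts (2) and (3) are handled by the same device: after the identical interchange and application of Lemma \ref{E-BOUND-2-1}, the remaining inner sum is $q^{-(h+1)/2}\cdot(|P|/\log_q|P|)\cdot \sum_{n=0}^{h} w(n)\,q^n$ with $w(n)=1$ in (2) and $w(n)=h+1-n\le h+1\ll \log_q|P|$ in (3). In both cases the sum $\sum_{n=0}^h w(n) q^n$ is dominated by the top of the range and is $\ll h q^h$, and combining this with the prefactor, using again $|P|=q^{g+1}$, produces $O(|P|^2)$.

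The only notable obstacle is that Lemma \ref{E-BOUND-2-1} provides only the ``trivial'' saving $\#\mb P_{g+1}\approx |P|/\log_q|P|$ and not an additional $|P|^{1/2}$ saving of Weil type; the underlying character sum is essentially estimated by $T_{f,P}=-1$ together with a count of primes, which does not exhibit square-root cancellation. This is precisely why the target is $O(|P|^2)$ rather than something sharper such as $O(|P|^{3/2})$: any stronger conclusion would require a sharpening of Lemma \ref{E-BOUND-2-1} itself. Beyond this observation, the proof is routine bookkeeping—verifying in each sub-bound that $q^{h/2}\cdot (|P|/\log_q|P|)\cdot \log_q|P|\ll |P|^2$ for $h\le 2g$, which is immediate from $|P|=q^{g+1}$—and no new technical ingredient beyond the odd-characteristic template is needed.
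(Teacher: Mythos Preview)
Your proposal is correct and follows exactly the paper's approach: interchange the order of summation, apply Lemma~\ref{E-BOUND-2-1} to the innermost character sum, then use the divisor estimate \eqref{DIVISOR1}. The paper only spells out part~(1) and dismisses (2) and (3) with ``Similarly''.

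One small bookkeeping slip in your treatment of (2) and (3): after applying $\sum_{f\in\A_n^+} d(f)\ll n q^n$, the inner sum should be $\sum_{n=0}^{h} w(n)\,n\,q^n$, not $\sum_{n=0}^{h} w(n)\,q^n$. This is harmless in (2), but in (3) your crude bound $w(n)=h+1-n\le h+1$ would then yield $\ll h^2 q^h$, which after multiplying by the prefactor gives $\asymp g\,q^{2g+1/2}$, just barely too large for $O(|P|^2)$ when $g\to\infty$ with $q$ fixed. Your own remark that the sum is ``dominated by the top of the range'' fixes this immediately: writing $m=h-n$ gives $\sum_{n}(h+1-n)\,n\,q^n = q^h\sum_{m\ge 0}(m+1)(h-m)q^{-m}\ll h\,q^h$, since near $n=h$ the weight $(h+1-n)\,n$ is $O(h)$. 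With that adjustment the bound $O(|P|^2)$ follows exactly as you indicate.
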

\begin{proof}
(1) By Lemma \ref{E-BOUND-2-1} and \eqref{DIVISOR1}, we have
\begin{align}
\sum_{P\in\mb P_{g+1}}\sum_{u\in\mc F_{P}}\sum_{n=0}^{h} (\pm 1)^{n} q^{-\frac{n}{2}}\sum_{\substack{f\in\A_{n}^+\\ f\ne\square}}d(f) \chi_{u}(f)
&\ll \sum_{n=0}^{h} q^{-\frac{n}{2}}\sum_{\substack{f\in\A_{n}^+\\ f\ne\square}}d(f) \left|\sum_{P\in\mb P_{g+1}}\sum_{u\in\mc F_{P}} \chi_{u}(f)\right| \\
&\ll \frac{|P|}{\log_{q}|P|} \sum_{n=0}^{h} n q^{\frac{n}{2}}  \ll |P|^{2}.
\end{align}
Similarly, we can prove (2) and (3).
\end{proof}

\subsubsection{\bf Proof of Theorem \ref{E-S-M-Theorem} (1)}\label{subsect5-3-3}
By Lemma \ref{A-FES} (1), we have
\begin{align}
\sum_{u\in\mc H_{g+1}} L(\tfrac{1}2,\chi_{u})^{2} = \sum_{u\in\mc H_{g+1}} \sum_{n=0}^{2g} q^{-\frac{n}2} \sum_{f\in\A_{n}^{+}} d(f) \chi_{u}(f)
+ \sum_{u\in\mc H_{g+1}} \sum_{n=0}^{2g-1} q^{-\frac{n}2} \sum_{f\in\A_{n}^{+}} d(f) \chi_{u}(f).
\end{align}
Then, by Propositions \ref{E-C-S-S-M-000} and \ref{E-C-NS-S-M-000}, we have
\begin{align}
\sum_{u\in\mc H_{g+1}} L(\tfrac{1}2,\chi_{u})^{2}
= \frac{2}{3 \zeta_{\A}(2)} g^2 q^{2g+1} + O\left((\log g) g q^{2g}\right).
\end{align}
This completes the proof of Theorem \ref{E-S-M-Theorem} (1). \hfill\fbox

\subsubsection{\bf Proof of Theorem \ref{E-S-M-Theorem} (2)}\label{subsect5-3-4}
By Lemma \ref{A-FES} (2), we have
\begin{align}
\sum_{P\in\mb P_{g+1}}\sum_{u\in\mc F_{P}} L(\tfrac{1}2,\chi_{u})^{2}
&= \sum_{P\in\mb P_{g+1}}\sum_{u\in\mc F_{P}} \sum_{n=0}^{2g} q^{-\frac{n}{2}}\sum_{f\in\A_{n}^{+}} d(f) \chi_{u}(f) \nonumber \\
&\quad + \sum_{P\in\mb P_{g+1}}\sum_{u\in\mc F_{P}}\sum_{n=0}^{2g-1} q^{-\frac{n}{2}} \sum_{f\in\A_{n}^{+}} d(f) \chi_{u}(f) \nonumber \\
&\quad - q^{-(g+\frac12)} \sum_{P\in\mb P_{g+1}}\sum_{u\in\mc F_{P}}\sum_{n=0}^{2g} \sum_{f\in\A_{n}^{+}} d(f) \chi_{u}(f) \nonumber \\
&\quad - q^{-g} \sum_{n=0}^{2g-1} \sum_{P\in\mb P_{g+1}}\sum_{u\in\mc F_{P}} \sum_{f\in\A_{n}^{+}} d(f) \chi_{u}(f) \nonumber \\
&\quad - \zeta_{\A}(\tfrac{3}2)^{-1} q^{-(g+\frac12)} \sum_{P\in\mb P_{g+1}}\sum_{u\in\mc F_{P}}
\sum_{n=0}^{2g} (2g+1-n) \sum_{f\in\A_{n}^{+}} d(f) \chi_{u}(f) \nonumber \\
&\quad - \zeta_{\A}(\tfrac{3}2)^{-1} q^{-g}\sum_{P\in\mb P_{g+1}}\sum_{u\in\mc F_{P}}\sum_{n=0}^{2g-1} (2g-n) \sum_{f\in\A_{n}^{+}} d(f) \chi_{u}(f).
\end{align}
Then, by Propositions \ref{E-C-S-S-M-001} and \ref{E-C-NS-S-M-001}, we have
\begin{align}
\sum_{P\in\mb P_{g+1}}\sum_{u\in\mc F_{P}} L(\tfrac{1}2,\chi_{u})^{2}  = \frac{1}{3\zeta_{\A}(2)} |P|^{2} (\log_{q}|P|)^2 + O\left(|P|^{2} (\log_{q}|P|)\right).
\end{align}
This completes the proof of Theorem \ref{E-S-M-Theorem} (2). \hfill\fbox

\subsubsection{\bf Proof of Theorem \ref{E-S-M-Theorem} (3)}\label{subsect5-3-5}
For any $u = v+\xi \in \mc F'_{g+1}$ with $v\in\mc F_{g+1}$, we have $\chi_{u}(f) = (-1)^{\deg(f)} \chi_{v}(f)$.
Then, by Lemma \ref{A-FES} (3), we have
\begin{align}
\sum_{P\in\mb P_{g+1}}\sum_{u\in\mc F'_{P}} L(\tfrac{1}2,\chi_{u})^{2}
&= \sum_{P\in\mb P_{g+1}}\sum_{u\in\mc F_{P}}\sum_{n=0}^{2g} (-1)^n q^{-\frac{n}{2}}\sum_{f\in\A_{n}^{+}} d(f) \chi_{u}(f) \nonumber\\
&\quad + \sum_{P\in\mb P_{g+1}}\sum_{u\in\mc F_{P}}\sum_{n=0}^{2g-1} (-1)^n q^{-\frac{n}{2}} \sum_{f\in\A_{n}^{+}} d(f) \chi_{u}(f) \nonumber\\
&\quad + q^{-(g+\frac12)} \sum_{P\in\mb P_{g+1}}\sum_{u\in\mc F_{P}}\sum_{n=0}^{2g} \sum_{f\in\A_{n}^{+}} d(f) \chi_{u}(f) \nonumber\\
&\quad + q^{-g} \sum_{P\in\mb P_{g+1}}\sum_{u\in\mc F_{P}}\sum_{n=0}^{2g-1} \sum_{f\in\A_{n}^{+}} d(f) \chi_{u}(f) \nonumber\\
&\quad + \frac{\zeta_{\A}(\tfrac{3}2)}{\zeta_{\A}(2)} q^{-(g+\frac12)} \sum_{P\in\mb P_{g+1}}\sum_{u\in\mc F_{P}}
\sum_{n=0}^{2g} (2g+1-n) \sum_{f\in\A_{n}^{+}} d(f) \chi_{u}(f)  \nonumber \\
&\quad + \frac{\zeta_{\A}(\tfrac{3}2)}{\zeta_{\A}(2)} q^{-g}\sum_{P\in\mb P_{g+1}}\sum_{u\in\mc F_{P}}
\sum_{n=0}^{2g-1} (2g-n) \sum_{f\in\A_{n}^{+}} d(f) \chi_{u}(f).
\end{align}
By Propositions \ref{E-C-S-S-M-001} and \ref{E-C-NS-S-M-001}, we have
\begin{align}
\sum_{P\in\mb P_{g+1}}\sum_{u\in\mc F'_{P}} L(\tfrac{1}2,\chi_{u})^{2}
= \frac{1}{3\zeta_{\A}(2)} |P|^{2} (\log_{q}|P|)^2 + O\left(|P|^{2} (\log_{q}|P|)\right).
\end{align}
This completes the proof of Theorem \ref{E-S-M-Theorem} (3). \hfill\fbox

\subsection*{Acknowledgments}
The first author was supported by an EPSRC-IH\'{E}S William Hodge Fellowship and by the EPSRC grant EP/K021132X/1.
The second and third authors were supported by the National Research Foundation of Korea(NRF) grant funded by the Korea government(MSIP)(No. 2014001824).

We also would like to thank two anonymous referees for the careful reading of the paper and the detailed comments that helped to improve the presentation and the clarity of the present work.


\begin{bibdiv}
\begin{biblist}

%

%

\bib{AK1}{article}{
   author={Andrade, J. C.},
   title={A note on the mean value of $L$-functions in function fields},
   journal={Int. J. Number Theory},
   volume={08},
   date={2012},
   number={07},
   pages={1725--1740},
}

\bib{AK2}{article}{
   author={Andrade, J. C.},
   author={Keating, J. P.},
   title={The mean value of $L(\tfrac{1}{2},\chi)$ in the hyperelliptic ensemble},
   journal={J. Number Theory},
   volume={132},
   date={2012},
   number={12},
   pages={2793--2816},
}

\bib{AK3}{article}{
   author={Andrade, J. C.},
   author={Keating, J. P.},
   title={Conjectures for the integral moments and ratios of $L$-functions over function fields},
   journal={J. Number Theory},
   volume={142},
   date={2014},
   number={},
   pages={102--148},
}

\bib{AK13}{article}{
   author={Andrade, J. C.},
   author={Keating, J. P.},
   title={Mean value theorems for $L$-functions over prime polynomials for the rational function field},
   journal={Acta. Arith.},
   volume={161},
   date={2013},
   number={4},
   pages={371--385},
}


\bib{Art}{article}{
author={Artin, E.}
title={Quadratische K\"{o}rper in Geibiet der H\"{o}heren Kongruzzen I and II}
journal={Math. Z.}
volume={19},
date={1924},
number={}
pages={153--296},
}

\bib{Ch08}{article}{
   author={Chen, Yen-Mei J.},
   title={Average values of $L$-functions in characteristic two},
   journal={J. Number Theory},
   volume={128},
   date={2008},
   number={7},
   pages={2138--2158},
}

\bib{CY08}{article}{
   author={Chen, Yen-Mei J.},
   author={Yu, Jing},
   title={On class number relations in characteristic two},
   journal={Math. Z.},
   volume={259},
   date={2008},
   number={1},
   pages={197--216},
}

\bib{Chow}{book}{
author={Chowla, S. D.}
title={The Riemann Hypothesis and Hilbert's Tenth Problem}
publisher={Gordon and Breach Science Publishers}
place={New York}
date={1965}
pages={}
isbn={}
review={}
}

\bib{Ell}{article}{
author={Elliot, P. D. T. A.},
title={On the Distribution of the Values of Quadratic $L$--Series in the Half--Plane $\sigma>\tfrac{1}{2}$},
journal={Invent. Math.},
volume={21},
date={1973},
number={}
pages={319--338}

}

\bib{GV}{article}{
author={Goldfeld, D.},
author={Viola, C.},
title={Mean Values of $L$--Functions Associated to Elliptic, Fermat and Other Curves at the Centre of the Critical Strip},
journal={J. Number Theory}
volume={11}
date={1979}
number={}
pages={305--320}
}

\bib{HR}{article}{
author={Hoffstein, J.},
author={Rosen, M.},
title={Average values of $L$-series in function fields},
journal={J. Reine Angew. Math.}
volume={426}
date={1992}
number={}
pages={117--150}
}

\bib{HL10}{article}{
   author={Hu, Su},
   author={Li, Yan},
   title={The genus fields of Artin-Schreier extensions},
   journal={Finite Fields Appl.},
   volume={16},
   date={2010},
   number={4},
   pages={255--264},
}

%


\bib{Jut}{article}{
author={Jutila, M.},
title={On the Mean Value of $L(\tfrac{1}{2},\chi)$ for Real Characters},
journal={Analysis 1}
volume={}
date={1981}
number={}
pages={149--161}
}

\bib{Po10}{article}{
   author={Pollack, Paul},
   title={Revisiting Gauss's analogue of the prime number theorem for
   polynomials over a finite field},
   journal={Finite Fields Appl.},
   volume={16},
   date={2010},
   number={4},
   pages={290--299},
}

\bib{Ro95}{article} {
    AUTHOR = {Rosen, Michael},
     TITLE = {Average value of {$\vert K_2({\scr O})\vert $} in function fields},
   JOURNAL = {Finite Fields Appl.},
    VOLUME = {1},
      YEAR = {1995},
    NUMBER = {2},
     PAGES = {235--241},
}

\bib{Ro02}{book}{
   author={Rosen, Michael},
   title={Number theory in function fields},
   series={Graduate Texts in Mathematics},
   volume={210},
   publisher={Springer-Verlag},
   place={New York},
   date={2002},
   pages={xii+358},
   isbn={0-387-95335-3},
   review={\MR{1876657 (2003d:11171)}},
}

\bib{Ru}{article}{
author={Rudnick, Z.},
title={Traces of high powers of the Frobenius class in the hyperelliptic ensemble},
journal={Acta. Arith.},
voulme={143}
date={2010}
number={}
pages={81--99}
}

\bib{Stan}{article}{
author={Stankus, E.},
title={Distribution of Dirichlet's $L$--functions with real characters in the half--plane $\mathrm{Re} \ s>\tfrac{1}{2}$}
journal={Liet. mat. rink.}
volume={15}
date={1975}
number={}
pages={199--214}
}

\bib{Weil}{book}{
author={Weil, A.},
title={Sur les Courbes Alg\'{e}briques et les Vari\'{e}t\'{e}s qui s'en D\'{e}duisent},
publisher={Hermann},
place={Paris},
date={1948},
pages={},
isbn={},
review={},
}

\end{biblist}
\end{bibdiv}

\end{document}